\definecolor{Myblue}{rgb}{0,0,0.6}
\DeclareFontFamily{U}{BOONDOX-calo}{\skewchar\font=45 }
\DeclareFontShape{U}{BOONDOX-calo}{m}{n}{
  <-> s*[1.05] BOONDOX-r-calo}{}
\DeclareFontShape{U}{BOONDOX-calo}{b}{n}{
  <-> s*[1.05] BOONDOX-b-calo}{}
\newcommand{\tens}[1]{%
  \mathbin{\mathop{\otimes}\limits^{#1}}%
}
\newcommand{\raisemath}[1]{\mathpalette{\raisem@th{#1}}}
\newcommand{\raisem@th}[3]{\raisebox{#1}{$#2#3$}}
\newcommand{\E}{\text{e}}
\newcommand{\I}{\text{i}}
\newcommand{\B}{\mathcal{B}}
\newcommand{\Beq}{\B_{\mathrm{eq}}}
\newcommand{\C}{\mathds{C}}
\newcommand{\Q}{\mathds{Q}}
\newcommand{\Z}{\mathds{Z}}
\def\1{\ifmmode\mathrm{1\!l}\else\mbox{\(\mathrm{1\!l}\)}\fi}
\newcommand{\be}{\begin{equation}}
\newcommand{\ee}{\end{equation}}
\newcommand{\bes}{\begin{equation*}}
\newcommand{\ees}{\end{equation*}}
\newcommand{\Hom}{\operatorname{Hom}}
\newcommand{\End}{\operatorname{End}}
\newcommand{\modu}{\operatorname{mod}}
\def\LG{\mathcal{LG}}
\def\LGgr{\mathcal{LG}^{\mathrm{gr}}}
\newcommand{\hmf}{\operatorname{hmf}}
\newcommand{\hmfgr}{\operatorname{hmf}^{\textrm{gr}}}
\newcommand{\ev}{\operatorname{ev}}
\newcommand{\tev}{\widetilde{\operatorname{ev}}}
\newcommand{\coev}{\operatorname{coev}}
\newcommand{\tcoev}{\widetilde{\operatorname{coev}}}
\def\lra{\longrightarrow}
\def\lmt{\longmapsto}
\DeclareMathOperator{\str}{str}
\DeclareMathOperator{\Res}{Res}
\newcommand{\diml}{\dim_{\mathrm{l}}}
\newcommand{\dimr}{\dim_{\mathrm{r}}}
\newcommand*{\longhookrightarrow}{\ensuremath{\lhook\joinrel\relbar\joinrel\rightarrow}}
\def\DGs{\mathcal{DG}^{\textrm{s}}}
\def\DGsp{\mathcal{DG}^{\textrm{sp}}}
\def\DGsat{\mathcal{DG}^{\textrm{sat}}}
\def\op{{\textrm{op}}}
\def\to{\rightarrow}
\def\lto{\longrightarrow}
\newcommand{\ZZ}{\ensuremath{\mathbb{Z}}}
\newcommand{\Lotimes}{\tens{\mathbf{L}}}
\newcommand{\Lotimesl}{\otimes^{\mathbf{L}}} 
\renewcommand\mod{\operatorname{mod}}
\newcommand\Perf{\operatorname{Perf}}
\newcommand{\RHom}{\mathbf{R}\!\operatorname{Hom}} 
\newcommand{\Dbf}{\mathbf{D}}
\newcommand{\Dbfb}{\mathbf{D}^{\textrm{b}}}
\newcommand{\Kbf}{\mathbf{K}}
\newcommand{\Hrm}{H^{\raisebox{0.2ex}{\scalebox{0.6}{$\bullet$}}}}
\newcommand{\Acal}{\mathscr{A}}
\newcommand{\Bcal}{\mathscr{B}}
\newcommand{\Pcal}{\mathscr{P}}
\newcommand{\Tcal}{\mathscr{T}}
\newcommand{\Xcal}{\mathscr{X}}
\newcommand{\dX}{{}^\dagger\hspace{-1.8pt}X}
\newcommand{\dXcal}{{}^\dagger\hspace{-1.8pt}\Xcal} 
\newcommand{\dP}{{}^\dagger\hspace{-1.8pt}P}
\newcommand{\dPcal}{{}^\dagger\hspace{-1.8pt}\Pcal}
\newcommand\arxiv[2]      {\href{http://arXiv.org/abs/#1}{#2}}
\newcommand\doi[2]        {\href{http://dx.doi.org/#1}{#2}}
\theoremstyle{definition}
\newtheorem{definition}{Definition}
\newtheorem{defprop}[definition]{Definition and Proposition}
\newtheorem{proposition}[definition]{Proposition}
\newtheorem{theorem}[definition]{Theorem}
\newtheorem{lemma}[definition]{Lemma}
\newtheorem{corollary}[definition]{Corollary}
\newtheorem{remark}[definition]{Remark}
\newtheorem{assumption}[definition]{Assumption}
\numberwithin{equation}{section}
\numberwithin{definition}{section}
\numberwithin{figure}{section}
\newcommand\void[1]{}
\begin{document}

\title{Calabi-Yau completions \\ and orbifold equivalences}
\author{Nils Carqueville$^*$ \quad Alexander Quintero V\'{e}lez$^\dagger$
\\[0.5cm]
  \normalsize{\texttt{\href{mailto:nils.carqueville@univie.ac.at}{nils.carqueville@univie.ac.at}}} \\
  \normalsize{\texttt{\href{mailto:alexander.quintero@correounivalle.edu.co}{alexander.quintero@correounivalle.edu.co}}}\\[0.1cm]
  {\normalsize\slshape $^*$Erwin Schr\"odinger Institute \& Fakult\"at f\"ur Mathematik, Universit\"at Wien, Austria}\\[-0.1cm]
    {\normalsize\slshape $^\dagger$Departamento de Matem\'{a}ticas, Universidad del Valle, Colombia}\\[-0.1cm]
}
\date{}
\maketitle

\begin{abstract}
Calabi-Yau algebras are particularly symmetric differential graded algebras. 
There is a construction called `Calabi-Yau completion' \cite{Keller11} which produces a canonical Calabi-Yau algebra from any homologically smooth dg algebra. 
Homologically smooth dg algebras also form a 2-category to which the construction of `equivariant completion' of \cite{cr1210.6363} can be applied. In this theory two objects are called `orbifold equivalent' if there is a 
	1-morphism~$X$ 
with invertible quantum dimensions between them. Any such relation entails a whole family of equivalences between categories. 
%
	We show that an orbifold equivalence between two homologically smooth and proper dg algebras lifts to an orbifold equivalence between their Calabi-Yau completions under certain conditions on~$X$. 

\medskip
\noindent
\noindent AMS 2010 Subject Classification: 16E45; 16E35, 18D05

\end{abstract}

\thispagestyle{empty}
\newpage

\tableofcontents

\section{Introduction and summary}\label{sec:intro}

In this paper we study the interaction between the notions of `Calabi-Yau completion' and `orbifold equivalence'. 
Let us begin by recalling these notions in round terms (Sections~\ref{sec:dgalgebras} and~\ref{sec:ECandOE} contain the precise definitions and further discussions). 

A differential graded (dg) algebra~$A$ over a field~$k$ is (homologically) smooth if it is perfect as a bimodule over itself; in other words, it is a compact object in the derived category $\Dbf(A^{\op} \otimes_k A)$. 
Being Calabi-Yau means being `very symmetric'. 
More precisely, for any integer~$n$ an $n$-Calabi-Yau algebra is a smooth dg algebra~$A$ with an isomorphism 
$$
A \cong \RHom_{A^{\op}\otimes_k A}(A,A^{\op}\otimes_k A) [n]
$$
in the derived category. So up to a shift~$A$ is its own dual as a bimodule. The theory of Calabi-Yau algebras is designed to capture and generalise properties of Calabi-Yau geometries at the level of derived categories. As such, they play prominent roles in mirror symmetry, noncommutative geometry, cluster algebras, and mathematical physics. 

Not every smooth dg algebra~$A$ is Calabi-Yau. However, there is a construction due to Keller \cite{Keller11} which produces a canonical $n$-Calabi-Yau algebra $\Pi_n(A)$: writing $\theta_A$ for the shift by $n-1$ of $\RHom_{A^{\op}\otimes_k A}(A,A^{\op}\otimes_k A)$, the $n$-Calabi-Yau completion $\Pi_n(A)$ is the tensor dg algebra 
\be\label{eq:CYcompletionintro}
\Pi_n(A) = A \oplus \theta_A \oplus \big(\theta_A \otimes_A \theta_A \big) \oplus\cdots \, . 
\ee
It really is a completion in the sense that $\Pi_n(A) \cong \Pi_n( \Pi_n(A) )$. 

Calabi-Yau algebras were extensively studied by Ginzburg in \cite{Ginzburg06}. In particular, there is a special type of Calabi-Yau algebra which can be built from any quiver with superpotential. These so-called Ginzburg algebras are surprisingly `dense' in $n$-Calabi-Yau algebras \cite{vdb1008.0599}, and in fact they exhaust all Calabi-Yau algebras in the important case of $n=3$.\footnote{The precise version of these statements involves \textsl{deformed} Calabi-Yau completions $\Pi_n(A,c)$ of \cite{Keller11}, where the differential on the tensor algebra~\eqref{eq:CYcompletionintro} is deformed by an element~$c$ of Hochschild homology. In the present paper we only consider the case $c=0$.}

Below we will exclusively consider acyclic quivers~$Q$ (which have zero superpotential) and their path algebras $A = kQ$, viewed as dg algebras with zero differential. The associated Ginzburg algebras were studied in \cite{Hermes14}. The simplest examples are Dynkin quivers $Q^{(\Gamma)}$ of ADE type~$\Gamma$, in which case the path algebras $\C Q^{(\Gamma)}$ are precisely the hereditary $\C$-algebras of finite representation type, and the corresponding Ginzburg algebras are certain twisted versions of preprojective algebras. 

\medskip

The general setting of the notion of orbifold equivalence is that of weak 2-categories (also called bicategories) with adjoints, 
while its origin lies in the study of symmetries and orbifolds for two-dimensional topological quantum field theories with defects (TQFT). 
As explained in \cite{dkr1107.0495}, any such TQFT~$Z$ (which by definition is a symmetric monoidal functor from a certain decorated bordism category to vector spaces) gives rise to a 2-category with adjoints $\B_Z$ that captures 
	much 
of the essence of~$Z$. Objects of $\B_Z$ are (interpreted as) closed TQFTs, 1-morphisms correspond to defect line operators interpolating between closed TQFTs, and 2-morphisms are point operators located at junctions where defect lines meet. 

In the pioneering work of Fr\"ohlich, Fuchs, Runkel, and Schweigert \cite{ffrs0909.5013} on rational conformal field theory, it was elucidated how the action of a finite symmetry group is encoded in a particular kind of Frobenius algebra. Transporting these ideas to TQFT lead in \cite{cr1210.6363} to a construction which produces from any bicategory with adjoints~$\B$ another such 2-category $\Beq$ which is called the equivariant completion of~$\B$. 
It really is a completion in the sense that $\Beq \cong (\Beq)_{\textrm{eq}}$. 

Objects of $\Beq$ are pairs $(a,A)$ with $a\in \B$ and $A: a \rightarrow a$ a separable Frobenius algebra in~$\B$, while the categories of 1-morphisms in $\Beq$ are given by bimodules and their intertwiners. In the special case when $\B = \B_Z$ for some TQFT~$Z$ and the Frobenius algebra~$A$ encodes the action of an orbifoldable symmetry group~$G$ on the closed subsector~$a$ of~$Z$, then $(a,A)\in\Beq$ describes the $G$-orbifold of~$a$. 
Not all separable Frobenius algebras `come from' symmetry groups (examples include those in~\eqref{eq:OEbetweenpathalgebras} below), and in this sense equivariant completion is a generalisation of symmetry.

The main result of \cite{cr1210.6363} is that if a 1-morphism $X: a \to b$ in~$\B$ has invertible `quantum dimension', then $A = X^\dagger \otimes X: a \to a$ is a separable Frobenius algebra, and there is an isomorphism 
\be\label{eq:aAcongbI}
(a,A) \cong (b,I_b) \quad \text{in } \Beq
\ee
where $I_b$ is the unit on~$b$. This relation between~$a$ and~$b$ induced by~$X$ is in fact an equivalence relation called orbifold equivalence, which we denote $a\sim b$. 

The isomorphism~\eqref{eq:aAcongbI} entails various interesting equivalences of categories. In particular, for every $c\in \B$ we have
\be\label{eq:OEconsequence}
\B(c,b) 
=
\Beq \big( (c,I_c), (b,I_b) \big) 
\cong 
\Beq \big( (c,I_c), (a, A) \big) \, . 
\ee
Since 1-morphisms in $\Beq$ are bimodules and everything is a bimodule over the unit $I_c$, this says that the category $\B(c,b)$ is equivalent to the category of those 1-morphisms $c \rightarrow a$ which have the structure of a right $A$-module. We stress that~$a$ and~$b$ may be very different from each other. 

To present examples of orbifold equivalences, let $\DGs_k$ denote the bicategory whose objects are smooth dg algebras and whose 1-morphism categories are the perfect derived categories, $\DGs_k(A,B) = \Perf( A^{\textrm{op}} \otimes_k B)$; further let $\DGsp_k$ be the subbicategory whose objects have finite-dimensional cohomology. In particular, the path algebras of Dynkin quivers $Q^{(\Gamma)}$ of ADE type~$\Gamma$ mentioned above are objects in $\DGsp_\C$. Employing the relation between Dynkin quivers and simple singularities \cite{kst0511155} together with the fact that quantum dimensions of matrix factorisations are easily computable thanks to the results of \cite{cm1208.1481}, it was shown in \cite{cr1210.6363, CRCR} that $Q^{(\Gamma)} \sim Q^{(\Gamma')}$ if and only if~$\Gamma$ and~$\Gamma'$ have the same Coxeter number. Put differently, in $\DGsp_\C$ we have orbifold equivalences
\begin{align}
& \C Q^{(\mathrm{A}_{2d-1})} \sim \C Q^{(\mathrm{D}_{d+1})}
\, , \quad \nonumber
\\
&\C Q^{(\mathrm{A}_{11})} \sim \C Q^{(\mathrm{E}_6)} 
\, , \quad
\C Q^{(\mathrm{A}_{17})} \sim \C Q^{(\mathrm{E}_7)} 
\, , \quad
\C Q^{(\mathrm{A}_{29})} \sim \C Q^{(\mathrm{E}_8)} 
\label{eq:OEbetweenpathalgebras}
\end{align}
as well as all the equivalences of type~\eqref{eq:OEconsequence}. 

\medskip

A natural question is whether the orbifold equivalences~\eqref{eq:OEbetweenpathalgebras} between path algebras lift to orbifold equivalences of their Calabi-Yau completions, i.\,e.~to their Ginzburg algebras. 
	In the present paper we give conditions under which the answer is affirmative (cf.~Corollary~\ref{cor:GinzburgDynkinOE}). 
	More generally, we describe conditions on when an orbifold equivalence $A \sim B$ in $\DGsp_k$ lifts to an orbifold equivalence $\Pi_n(A) \sim \Pi_n(B)$ in $\DGs_k$ between the $n$-Calabi-Yau completions (cf.~Theorem~\ref{thm:maintheorem}). 

There are two key ingredients to the proof. On the one hand, consistently using the natural 2-categorical language guides the argument and provides structural clarity. On the other hand, for technical computations we rely on K-projective resolutions to explicitly represent the bimodules we work with. 
In particular, for every 1-morphism $X: A \rightarrow B$ in $\DGsp_k$ we identify 
	canonical corresponding 1-morphisms $\Xcal, \Xcal': \Pi_n(A) \rightarrow \Pi_n(B)$ 
between the Calabi-Yau completions, and we express the quantum dimension of~$\Xcal$ in terms of those of~$X$
	-- under the assumption that $\Xcal \cong \Xcal'$ in $\DGs_k$. 
It is then immediate that if~$X$ has invertible quantum dimensions, i.\,e.~if it exhibits an orbifold equivalence $A \sim B$, then~$\Xcal$ also has invertible quantum dimension and provides an orbifold equivalence $\Pi_n(A) \sim \Pi_n(B)$. 
	%
	In particular, Ginzburg algebras for ADE type Dynkin quivers with the same Coxeter number are orbifold equivalent if the condition $\Xcal \cong \Xcal'$ is met. 

\medskip

We end this introduction with some further
	comments. 

\textsl{Twisted TQFTs. } 
Firstly, working with a 2-category of dg algebras $\DGsp_k$ is natural also from the point of view of TQFT: as shown in \cite{bfk1105.3177v3}, $\DGsp_k$ is equivalent to the 2-category $\DGsat_k$ of saturated dg categories. These include in particular Fukaya categories, derived categories of coherent sheaves, and categories of matrix factorisations -- or, put differently, topologically A- and B-twisted sigma models and Landau-Ginzburg models. More generally, we think of $\DGsat_k$ as the 2-category of ``all TQFTs arising from topologically twisting $\mathcal N=(2,2)$ supersymmetric quantum field theories'', with the differential in a dg category playing the role of the BRST operator. To understand mirror symmetry and other global properties of TQFTs, one should not study (the 2-categories of) sigma models or Landau-Ginzburg models individually, but rather all at once in a holistic conceptual framework. This is precisely what $\DGsat_k$ provides. Hence a general result on orbifold equivalences (which are a form of symmetry) and Calabi-Yau completions is of interest in this setting. 

\textsl{Exceptional unimodular singularities. } 
	We expect that there are many further interesting orbifold equivalences. 
In particular, there are four pairs among Arnold's 14 exceptional unimodular singularities whose Dynkin diagrams have the same Coxeter number, to wit $(E_{13},Z_{11})$, $(E_{14},Q_{10})$, $(Z_{13},Q_{11})$, and $(W_{13},S_{11})$ in the notation of \cite[Tab.\,5.2]{Ebelingbook}. It is natural to conjecture that these singularities are orbifold equivalent in the bicategory of Landau-Ginzburg 
	models.\footnote{This conjecture was proven in \cite{OEReck}, where also further orbifold equivalences were found.} This would have 
direct consequences for the $\mathcal N=2$ superconformal four-dimensional gauge theories studied in \cite{cdz1107.5747}. 

\textsl{Fukaya categories. }
A huge class of four-dimensional quantum field theories are related to the infamous six-dimensional $(0,2)$ superconformal field theories by compactifying the latter on certain punctured Riemann surfaces with meromorphic differential $(C,\phi)$ called `Gaiotto curves' \cite{Gaiotto07}. In this setting a quiver~$Q$ arises from triangulating (a blowup of) the Gaiotto curve, while on the other hand one can construct a symplectic 6-fold $Y_\phi$ as a conic fibration from the data $(C,\phi)$. It was shown in \cite{Smith13} that the bounded derived category of the Ginzburg algebra of~$Q$ for $n=3$ fully embeds into a certain Fukaya category of $Y_\phi$. 
It is natural to conjecture that 
	there are orbifold equivalences 
of symplectic 6-folds $Y_\phi$ of corresponding ADE types. 
In Remark~\ref{rem:Fukaya} we shall be more specific about this conjecture. 

\medskip

The remainder of the present paper is organised as follows. 
Section~\ref{sec:dgalgebras} and Appendix~\ref{app:dgalgebras} contain background material on dg algebras, their derived categories, Calabi-Yau completions and Ginzburg algebras. 
Section~\ref{sec:ECandOE} provides the basics on 2-categories, reviews the notions of equivariant completion and orbifold equivalence, as well as the above-mentioned examples involving simple singularities and Dynkin quivers; Propositions~\ref{prop:Xaction} and~\ref{prop:Aaction} also contain new details on these orbifold equivalences. 
In Section~\ref{sec:liftingOE} we present our main results, computing quantum dimensions in terms of Casimir elements and lifting orbifold equivalences in $\DGsp_k$ to Calabi-Yau completions.

\subsubsection*{Acknowledgements} 
We thank 
	Daniel Murfet, 
	Daniel Plencner, 
	Ingo Runkel, 
	and 
	Kazushi Ueda
for reading an earlier version of this manuscript, 
	and we are grateful to 
		Ivan Smith
	for explanations regarding Fukaya categories associated to meromorphic differentials. The work of N.\,C.~is partially supported by a grant from the Simons Foundation, and from the Austrian Science Fund (FWF): P\,27513-N27.

\section{Background on differential graded algebras}\label{sec:dgalgebras}

In this section we collect some of the standard definitions and results from the literature on differential graded algebras (or simply dg algebras) that are prerequisite for our studies in later sections. 
Section~\ref{subsec:perfectdual} discusses derived and perfect categories and their dualities, while in Section~\ref{subsec:CYcompletions} we recall the general notion of Calabi-Yau completion for dg algebras and the special case of Ginzburg algebras. 
Throughout~$k$ denotes a fixed field.  

\subsection{Perfect categories and duality}\label{subsec:perfectdual}

Good introductions to dg algebras and their modules include \cite{BL94,GM03,Pauksztello08}. For the reader's convenience as well as to fix our notation and conventions, we also provide a short review in Appendix~\ref{app:dgalgebras}.

\subsubsection{The derived category of a dg algebra}
\label{subsect:2.1}
We begin with a reminder on the derived category of a given dg algebra $A$ (over~$k$). 
A map of dg $A$-modules $f \colon M \to N$ is called \textsl{null-homotopic} if there exists a homomorphism of graded modules $h \colon M \to N$ of degree $+1$ such that $f= \nabla_N \circ h + h \circ \nabla_M$. The \textsl{homotopy category} $\Kbf (A)$ is defined as the category which has all dg $A$-modules as objects, and whose morphisms are equivalence classes of maps of dg $A$-modules up to homotopy. It carries the structure of a triangulated category and as such it comes equipped with a shift functor $[1]$ defined by $(M[1])^{i}=M^{i+1}$ and $\nabla_{M[1]}=-\nabla_M$ for every $i \in \ZZ$ and every dg $A$-module~$M$. A morphism of dg $A$-modules $s \colon M \to N$ is called a \textsl{quasi-isomorphism} if the induced morphism $\Hrm(s) \colon \Hrm(M) \to \Hrm(N)$ is an isomorphism of graded $k$-vector spaces. 

The \textsl{derived category} $\Dbf(A)$ is by definition the localisation of $ \Kbf(A)$ with respect to the class $\Sigma$ of all quasi-isomorphisms. This means that the derived category has the same objects as the homotopy category $\Kbf(A)$, and that morphisms in $\Dbf(A)$ are given by left fractions $s^{-1}\circ f$ with $s \in \Sigma$. We also note that the category $\Dbf(A)$ has a triangulated structure that is induced by the triangulated structure of $\Kbf(A)$. 

The derived category $\Dbf(A)$ contains a full subcategory formed by those dg $A$-modules $M$ whose total cohomology $\Hrm(M)$ is finite-dimensional as a graded $k$-vector space. We will denote this subcategory by $\Dbfb(A)$. In the case when $A$ is a finite-dimensional $k$-algebra, $\Dbfb(A)$ is identified with the bounded derived category of finitely generated left $A$-modules $\Dbfb(\mod (A))$.

\medskip
 
Next we shall discuss the lifting of the bifunctors $\Hom_A(-,-)$ and $-\otimes_A-$ to the derived category $\Dbf(A)$, for which we need to introduce some additional terminology. A dg $A$-module $P$ is said to be \textsl{K-projective} if the functor $\Hom_A(P,-)$ preserves quasi-isomorphisms. Dually, a dg $A$-module $I$ is said to be \textsl{K-injective} if the functor $\Hom_A(-,I)$ preserves quasi-isomorphisms. A \textsl{K-projective resolution} of a dg $A$-module $M$ is a K-projective dg $A$-module $P$ together with a quasi-isomorphism $\pi \colon P \to M$. Correspondingly, a \textsl{K-injective resolution} of $M$ is a K-injective dg $A$-module $I$ together with a quasi-isomorphism $\iota \colon M \to I$. Similar definitions hold for dg $A^{\op}$-modules. 

It can be shown that the derived category $\Dbf(A)$ has enough K-projective and enough K-injectives. This means that any object $M$ of $\Dbf(A)$ admits a K-projective resolution and a K-injective resolution. As a consequence, the bifunctors $\Hom_A(-,-)$ and $-\otimes_A-$ induce, respectively, a right derived bifunctor $\RHom_A(-,-)\colon \Dbf(A)^{\op} \times \Dbf(A) \to \Dbf(k)$ and a left derived bifunctor $-\Lotimesl_A-\colon \Dbf(A^{\op}) \times \Dbf(A) \to \Dbf(k)$, which can be computed as follows. Let~$M$ and~$N$ be dg $A$-modules, and consider the functors $\RHom_A(M,-)\colon \Dbf(A) \to \Dbf(k)$ and $\RHom_A(-,N)\colon \Dbf(A)^{\op}\to \Dbf(k)$. Take a K-projective resolution $\pi \colon P \to M$ and a K-injective resolution $\iota\colon N \to I$ in $\Dbf(A)$. We then have the following canonical isomorphisms:
$$
\RHom_A(M,-) \cong \Hom_A(P,-) 
\, ,\quad 
\RHom_A(-,N) \cong \Hom_A(-,I)
\, . 
$$
In similar fashion, let $M$ be a dg $A^{\op}$-module and $N$ a dg $A$-module, and consider the functors $M \Lotimesl_A-\colon \Dbf(A) \to \Dbf(k)$ and $-\Lotimesl_A N \colon \Dbf(A^{\op})\to \Dbf(k)$. Take a K-projective resolution $\pi \colon P \to M$ in $\Dbf(A^{\op})$ and a K-projective resolution $\rho \colon Q \to N$ in $\Dbf(A)$. Then we obtain the following canonical isomorphisms:
$$
M \Lotimes_A - \cong P \otimes_A -
\, , \quad 
-\Lotimes_A N \cong -\otimes_A Q
\, .
$$

As an aside, it should be noted that if $X$ is a dg $A^{\op} \otimes_k B$-module then we have induced right derived functors $\RHom_{A^{\op}}(X,-)\colon \Dbf(A^{\op}) \to \Dbf(B^{\op})$, $\RHom_{A^{\op}}(-,X)\colon \Dbf(A^{\op}) \to \Dbf(B)$, $\RHom_{B}(X,-) \colon  \Dbf(B) \to \Dbf(A)$ and $\RHom_{B}(-,X) \colon \Dbf(B)  \to \Dbf(A^{\op})$. We also have the left derived tensor functors $X \Lotimesl_A - \colon \Dbf(A) \to \Dbf(B)$ and $- \Lotimesl_B X \colon \Dbf(B^{\op}) \to \Dbf(A^{\op})$. 

We remark that the adjoint associativity law relating $\Hom$ and $\otimes$ can be upgraded to the derived category level. To be precise, take $A$ and $B$ to be dg algebras and $X$ to be a dg $A^{\op} \otimes_k B$-module. Then the left derived tensor functors $X \Lotimesl_A- \colon \Dbf(A) \to \Dbf(B)$ and $- \Lotimesl_B X \colon \Dbf(B^{\op}) \to \Dbf(A^{\op})$ are, respectively, the left adjoints of the right derived functors $\RHom_B(X,-)\colon \Dbf(B) \to \Dbf(A)$ and $\RHom_{A^{\op}}(X,-)\colon \Dbf(A^{\op}) \to \Dbf(B^{\op})$.

\subsubsection{The perfect category of a dg algebra}
\label{subsect:1.3}

We continue with a brief discussion of the perfect category of a dg algebra $A$. The reader is referred to \cite{Keller94,Keller06,Petit13} for a more complete exposition.

A dg $A$-module $M$ is said to be \textsl{perfect} if it can be obtained from $A$ using finitely many distinguished triangles, shifts, direct summands and finite coproducts. It can be shown that a dg $A$-module $M$ is perfect if and only if it is a compact object of $\Dbf(A)$, 
i.\,e.~if the functor $\Hom_{\Dbf(A)}(M,-)$ commutes with arbitrary coproducts. By $\Perf(A)$ we denote the full subcategory of $\Dbf(A)$ consisting of dg $A$-modules which are perfect. We will refer to it as the \textsl{perfect category} of the dg algebra $A$. If~$A$ is an ordinary $k$-algebra, then $\Perf(A)$ may be identified with the bounded homotopy category of finitely generated projective left $A$-modules. 

For future reference let us record the following simple but highly useful fact (cf.~\cite[Prop.\,3.10]{Petit13}). 

\begin{proposition}
\label{prop:2.2}
Let $A$ and $B$ be dg algebras and let $F \colon \Dbf(A) \to \Dbf(B)$ be a triangulated functor such that $F(A)$ is in $\Perf(B)$. Then, for any $M \in \Perf(A)$, its image $F(M)$ is in $\Perf(B)$.
\end{proposition}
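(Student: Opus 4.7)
The plan is to use the characterisation of $\Perf(A)$ recalled just above the statement: it is, by definition, the smallest full subcategory of $\Dbf(A)$ containing~$A$ and closed under shifts, finite coproducts, distinguished triangles, and direct summands (i.e.\ the smallest thick subcategory of $\Dbf(A)$ containing~$A$). So the strategy is to show that the full subcategory
$$
\Tcal = \bigl\{ M \in \Dbf(A) \,\big|\, F(M) \in \Perf(B) \bigr\}
$$
is thick and contains~$A$; then $\Perf(A) \subseteq \Tcal$ is immediate, which is precisely the claim.

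By hypothesis $A \in \Tcal$. The closure properties then follow one by one from the fact that $F$ is triangulated (hence additive): (i) $\Tcal$ is closed under the shift $[1]$ because $F$ commutes with shifts, and $\Perf(B)$ is closed under $[1]$; (ii) $\Tcal$ is closed under finite coproducts because an additive functor between additive categories preserves finite biproducts, and $\Perf(B)$ is closed under finite coproducts; (iii) if $M \to N \to P \to M[1]$ is a distinguished triangle in $\Dbf(A)$ with two of the three objects in $\Tcal$, then applying~$F$ yields a distinguished triangle in $\Dbf(B)$ with two of three objects in $\Perf(B)$, whence the third lies in $\Perf(B)$ by the corresponding closure property of $\Perf(B)$.

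The only slightly subtle step is closure under direct summands. If $M \oplus N \in \Tcal$, then
$$
F(M) \oplus F(N) \;\cong\; F(M \oplus N) \;\in\; \Perf(B),
$$
using once more that the additive functor $F$ preserves finite biproducts. Since $\Perf(B)$ is itself closed under direct summands (compact objects of a triangulated category with arbitrary coproducts form a thick subcategory, a standard fact), both $F(M)$ and $F(N)$ lie in $\Perf(B)$, so $M, N \in \Tcal$. Altogether $\Tcal$ is a thick subcategory of $\Dbf(A)$ containing~$A$, hence contains $\Perf(A)$, and the proposition follows. I expect this last step (invoking that the perfect/compact objects form a thick subcategory) to be the only point that requires citing an external fact; everything else is a direct unwinding of the definition of a triangulated functor.
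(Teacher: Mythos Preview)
Your argument is correct and is the standard thick-subcategory argument for this kind of statement. The paper itself does not supply a proof: it simply records the proposition with a reference to \cite[Prop.\,3.10]{Petit13}, so there is nothing to compare against beyond noting that your write-up is exactly the expected one.
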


We proceed to develop our vocabulary. A dg algebra $A$ is \textsl{proper} if it is a perfect dg $k$-module; this is equivalent to say that the cohomology $\Hrm(A)$ is finite-dimensional as a graded $k$-vector space. A dg algebra $A$ is \textsl{homologically smooth} if it is a perfect dg $A^{\op} \otimes_k A$-module. It is obvious that if $A$ is homologically smooth, then so is $A^{\op}$. We also note that if $A$ is an ordinary $k$-algebra then $A$ is homologically smooth if and only if $A$ has finite projective dimension as an $A$-$A$-bimodule. 

Suppose now that $A$ and $B$ are dg algebras and $X$ is a perfect dg $A^{\op} \otimes_k B$-module. If $A$ is proper, then it is not hard to see that $X$ is perfect as a dg $B$-module. Hence, applying Proposition \ref{prop:2.2}, we deduce that the left derived tensor functor $X \Lotimesl_A- \colon \Dbf(A) \to \Dbf(B)$ induces a functor $X \Lotimesl_A- \colon \Perf(A) \to \Perf(B)$. Similarly, if $B$ is proper, we have that $X$ is perfect as a dg $A^{\op}$-module and therefore the left derived tensor functor $-\Lotimesl_B X \colon \Dbf(B^{\op}) \to \Dbf(A^{\op})$ induces a functor $-\Lotimesl_B X \colon \Perf(B^{\op}) \to \Perf(A^{\op})$. In particular, if $A$ and $B$ are homologically smooth, then so is their tensor product $A \otimes_k B$.

We also need the following result by To\"{e}n and Vaqui\'{e} \cite[Lem.\,2.8.2]{TV07} which allows us to prove perfectness of a dg bimodule.

\begin{proposition}
\label{prop:2.3}
Let $A$ and $B$ be dg algebras and let $X$ be a dg $A^{\op} \otimes_k B$-module. If $A$ is homologically smooth and the left derived tensor functor $X \Lotimesl_A- \colon \Dbf(A) \to \Dbf(B)$ preserves perfect dg modules, then $X$ is a perfect dg $A^{\op} \otimes_k B$-module.
\end{proposition}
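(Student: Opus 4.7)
The plan is to exhibit $X$ as the image of $A \in \Perf(A^{\op} \otimes_k A)$ under a triangulated functor between bimodule derived categories that preserves perfect objects. Since $A$ is perfect as a bimodule over itself by homological smoothness, this will give $X \in \Perf(A^{\op} \otimes_k B)$.

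Concretely, I consider the functor
$$\Phi \colon \Dbf(A^{\op} \otimes_k A) \lra \Dbf(A^{\op} \otimes_k B)\, , \quad M \lmt X \Lotimesl_A M\, ,$$
obtained by tensoring the right $A$-action of $X$ with the left $A$-action of $M$; the remaining right $A$-action of $M$ and left $B$-action of $X$ then endow $\Phi(M)$ with the structure of a dg $A^{\op} \otimes_k B$-module. Being a derived tensor functor, $\Phi$ is triangulated, and one readily checks $\Phi(A) \cong X$. By Proposition~\ref{prop:2.2}, it therefore suffices to show that $\Phi(A^{\op} \otimes_k A)$ is perfect in $\Dbf(A^{\op} \otimes_k B)$.

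Since $A^{\op} \otimes_k A$ is the rank-one free $A^{\op} \otimes_k A$-module and hence K-projective, the derived tensor product can be computed directly; unwinding the bimodule structures yields a canonical isomorphism
$$\Phi(A^{\op} \otimes_k A) \;\cong\; (A^{\op} \otimes_k B) \Lotimesl_B X\, ,$$
where $X$ is viewed as an object of $\Dbf(B)$ via its left $B$-module structure. Now the hypothesis on $X \Lotimesl_A -$ applied to $A \in \Perf(A)$ gives $X \cong X \Lotimesl_A A \in \Perf(B)$. On the other hand, the base-change functor $(A^{\op} \otimes_k B) \Lotimesl_B - \colon \Dbf(B) \to \Dbf(A^{\op} \otimes_k B)$ sends the generator $B$ to the free module $A^{\op} \otimes_k B$, which is trivially perfect, so a second application of Proposition~\ref{prop:2.2} shows that this functor also preserves perfect objects. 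Combining these two facts yields $\Phi(A^{\op} \otimes_k A) \in \Perf(A^{\op} \otimes_k B)$, as required.

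The main obstacle is essentially bookkeeping: one must track carefully how the various left and right $A$- and $B$-actions transform when $X \Lotimesl_A -$ is lifted from $\Dbf(A)$ to $\Dbf(A^{\op} \otimes_k A)$, and verify the canonical identification $\Phi(A^{\op} \otimes_k A) \cong (A^{\op} \otimes_k B) \Lotimesl_B X$ of $A^{\op} \otimes_k B$-modules. Once this is set up, the proof reduces to two applications of Proposition~\ref{prop:2.2} together with the observation that $X$ itself is a perfect left $B$-module.
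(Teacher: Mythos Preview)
The paper does not actually prove Proposition~\ref{prop:2.3}; it simply quotes it from To\"en--Vaqui\'e \cite[Lem.\,2.8.2]{TV07}. So there is no proof in the paper to compare against, and the relevant question is just whether your argument is correct. It is, and in fact it is essentially the standard proof of this lemma: use homological smoothness to realise $X\cong X\Lotimesl_A A$ with $A\in\Perf(A^{\op}\otimes_k A)$, and then apply Proposition~\ref{prop:2.2} to the lifted functor $\Phi=X\Lotimesl_A - \colon \Dbf(A^{\op}\otimes_k A)\to\Dbf(A^{\op}\otimes_k B)$ once you know that $\Phi$ sends the free bimodule $A^{\op}\otimes_k A$ to a perfect object.

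One small point deserves tightening. You write that the derived tensor product $X\Lotimesl_A(A^{\op}\otimes_k A)$ can be computed underived ``since $A^{\op}\otimes_k A$ is the rank-one free $A^{\op}\otimes_k A$-module and hence K-projective''. But the tensor product is over $A$, not over $A^{\op}\otimes_k A$, so what you actually need is that $A^{\op}\otimes_k A$ is K-flat as a \emph{left $A$-module} (via the second factor). This is true --- since $k$ is a field, $-\otimes_A(A^{\op}\otimes_k A)\cong -\otimes_k A^{\op}$ preserves quasi-isomorphisms --- but it is not literally the reason you gave. Alternatively, you can bypass this by replacing $X$ with a K-projective resolution as an $A^{\op}\otimes_k B$-module from the start; such a resolution is also K-projective over $A^{\op}$, and then all relevant tensor products compute their derived versions. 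With that adjustment your bookkeeping of the bimodule structures and the identification $\Phi(A^{\op}\otimes_k A)\cong(A^{\op}\otimes_k B)\Lotimesl_B X$ go through, and the two applications of Proposition~\ref{prop:2.2} finish the job.
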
 

The following fact (see e.\,g.~\cite[Thm.\,4.1.6]{Pauksztello08}) will also be useful to us in Section~\ref{sec:liftingOE}.  

\begin{proposition}
\label{prop:2.4}
Let $A$ and $B$ be dg algebras and let $X$ be a perfect dg $A^{\op} \otimes_k B$-module. 
\begin{enumerate}
\item For each dg $A^{\op}$-module $M$, there is a natural isomorphism
$$
M \Lotimes_A \RHom_{A^{\op}}(X, A) \xlongrightarrow{\cong} \RHom_{A^{\op}}(X, M) 
$$
which is functorial in $M$.
\item For each dg $B$-module $N$, there is a natural isomorphism 
$$
\RHom_B(X, B) \Lotimes_B N \xlongrightarrow{\cong} \RHom_B (X, N) 
$$
which is functorial in $N$.
\end{enumerate}
\end{proposition}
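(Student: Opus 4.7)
The plan is to prove both parts by d\'evissage on the perfect bimodule~$X$, exploiting that the class of $X$ for which the comparison morphism is an isomorphism forms a thick triangulated subcategory of $\Perf(A^{\op}\otimes_k B)$. I focus on part~(i); part~(ii) follows by the symmetric argument with the roles of $A$ and $B$ exchanged.

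First I would construct the comparison morphism at the chain level. Choose a K-projective resolution $\pi\colon P \to X$ of $X$ as a dg $A^{\op}\otimes_k B$-module. Since $k$ is a field, $B$ is projective as a dg $k$-module, so $P$ remains K-projective when regarded as a dg $A^{\op}$-module and therefore computes $\RHom_{A^{\op}}(X,-)$. Define the chain map of dg $B^{\op}$-modules
\[
\eta_X \colon M \otimes_A \Hom_{A^{\op}}(P,A) \lto \Hom_{A^{\op}}(P,M),
\qquad m \otimes f \lmt \bigl(p \lmt m \cdot f(p)\bigr),
\]
check its naturality in~$X$, and verify that it descends to a natural transformation
$\eta_X\colon M \Lotimes_A \RHom_{A^{\op}}(X,A) \to \RHom_{A^{\op}}(X,M)$
in the derived category.

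Next, let $\Tcal \subseteq \Dbf(A^{\op}\otimes_k B)$ denote the full subcategory of those $X$ for which $\eta_X$ is an isomorphism for every dg $A^{\op}$-module~$M$. Standard triangulated-category arguments show that $\Tcal$ is thick: closure under shifts is immediate from functoriality, closure under cones follows from the five-lemma applied to the morphism of distinguished triangles induced by~$\eta$ (both sides being exact triangulated functors of $X$), and closure under direct summands is automatic since a direct summand of an isomorphism is an isomorphism. If in addition one checks that $A^{\op}\otimes_k B$ itself lies in $\Tcal$, then because $\Perf(A^{\op}\otimes_k B)$ is by definition the smallest thick triangulated subcategory of $\Dbf(A^{\op}\otimes_k B)$ containing the free rank-one bimodule, we conclude $\Perf(A^{\op}\otimes_k B)\subseteq \Tcal$, which gives~(i).

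The step I expect to be the main obstacle is verifying the base case $X = A^{\op}\otimes_k B$. There, both sides of $\eta_X$ must be computed via tensor-hom adjunction and the K-projectivity of $A^{\op}\otimes_k B$ as an $A^{\op}$-module, and one has to track the ambient $A\otimes_k B^{\op}$-bimodule structures to see that $\eta_{A^{\op}\otimes_k B}$ reduces, after these identifications, to a canonical unit isomorphism. Everything else in the proof is formal thick-subcategory bookkeeping, so this identification is the single point that makes the whole d\'evissage go through.
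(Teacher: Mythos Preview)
The paper does not prove this proposition; it simply cites \cite[Thm.\,4.1.6]{Pauksztello08} and then writes out the chain-level representative~$\zeta$ of the comparison map (which is exactly your~$\eta_X$). Your d\'evissage framework is the standard one and is structurally correct: the class~$\Tcal$ is indeed a thick triangulated subcategory, since both sides of~$\eta_X$ are (contravariant) triangulated functors of~$X$.

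The genuine problem is your choice of ambient category for the d\'evissage, and hence your base case. For $X=A^{\op}\otimes_k B$, the underlying right $A$-module is free on the $k$-module~$B$, so $\Hom_{A^{\op}}(A^{\op}\otimes_k B,\,-)\cong \Hom_k(B,-)$ and your base-case map becomes
\[
M\otimes_A \Hom_k(B,A)\;\lto\;\Hom_k(B,M).
\]
This is \emph{not} an isomorphism when~$B$ has infinite-dimensional cohomology: take $A=k$, $B=k[x]$ concentrated in degree~$0$, and $M=\bigoplus_{\NN}k$; then the left-hand side is $\bigoplus_{\NN}\prod_{\NN}k$ while the right-hand side is $\prod_{\NN}\bigoplus_{\NN}k$, and the comparison map is not surjective. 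So the base case you single out as ``the main obstacle'' is in fact an actual obstruction, not merely a bookkeeping exercise.

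The repair is to observe that both sides of~$\eta_X$ depend on~$X$ only through its underlying dg $A^{\op}$-module structure, and to run the d\'evissage in $\Perf(A^{\op})$ rather than in $\Perf(A^{\op}\otimes_k B)$. The base case is then $X=A$, where~$\eta_A$ is the unit isomorphism $M\otimes_A A\cong M$, and everything goes through. This proves the statement under the hypothesis that~$X$ is perfect as a dg $A^{\op}$-module (respectively, as a dg $B$-module for part~(ii)). In all of the paper's applications $A$ and~$B$ are proper, so by the observation just before Proposition~\ref{prop:2.3} perfectness of~$X$ over $A^{\op}\otimes_k B$ does imply perfectness over each factor, and the argument applies; but as literally stated the proposition is too strong, and your proposed d\'evissage cannot close the gap.
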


It will prove convenient for our purposes to isolate representatives of these canonical maps in the homotopy category in the case in which $A$ and $B$ are both assumed to be proper. In this regard, it is useful to observe that a K-projective object of $\Dbf(A^{\op} \otimes_k B)$ is also a K-projective object of $\Dbf(A^{\op})$ and $\Dbf(B)$, forgetting the dg $B$-module and dg $A^{\op}$-module structure, respectively. Hence, if we take a K-projective resolution $\pi\colon P \to X$ of $X$ in $\Dbf(A^{\op} \otimes_k B)$, we will be able to conclude that it is a K-projective resolution of $X$ both as a dg $A^{\op}$-module and as a dg $B$-module. With this understood, the canonical morphism of Proposition~\ref{prop:2.4}(i) will be represented by the map in $\Kbf(B)$ given by 
\begin{align*}
\zeta \colon M \otimes_A \Hom_{A^{\op}}(P,A) & \lra \Hom_{A^{\op}}(P,M) \, , 
\\
m \otimes f & \longmapsto \big( p \mapsto m f(p) \big)
\, . 
\end{align*}
In the same way, the canonical morphism of Proposition \ref{prop:2.4}(ii) is represented by the map in $\Kbf(A^{\op})$ given by 
\begin{align*}
\widetilde{\zeta} \colon \Hom_B(P,B) \otimes_B N & \lra \Hom_B(P,N) \, , 
\\
g \otimes n & \longmapsto \big( p \mapsto (-1)^{\vert p \vert \vert n \vert} g(p) n \big) \, .
\end{align*}

\subsubsection{Duality for perfect dg modules}
\label{subsect:2.3}

We conclude this subsection with a short examination of duality for perfect dg modules. For details, the reader is referred to \cite{Keller08,Shklyarov07,Shklyarov13,Petit13}.

Let $A$ be a homologically smooth dg algebra. We define the \textsl{inverse dualising complex} $\Theta_A$ of $A$ to be any K-projective resolution of $\RHom_{A^{\op}\otimes_k A}(A,A^{\op}\otimes_k A)$ considered as an object of $\Dbf(A^{\op}\otimes_k A)$. In view of the discussion in Section~\ref{subsect:2.1}, we have a canonical isomorphism
$$
\RHom_{A^{\op}\otimes_k A}(A,A^{\op}\otimes_k A) \Lotimes_A - \cong \Theta_A \otimes_A - 
$$
and thus we get a triangulated functor $\Theta_A \otimes_A - \colon \Dbf(A) \to \Dbf(A)$.  It can be shown, using a variation of \cite[Lem.\,4.1]{Keller08}, that this functor is a quasi-inverse of a Serre functor. This means that for any object~$M$ of $\Dbfb(A)$ and any object~$N$ of $\Dbf(A)$, there is a canonical isomorphism
$$
\Hom_{\Dbf(A)}(\Theta_A \otimes_A N,M) \xlongrightarrow{\cong} \Hom_{\Dbf(A)}(M,N)^*
$$
where $(-)^*$ denotes the dual with respect to $k$.  

If in addition to being homologically smooth, $A$ is also proper, one can give an explicit description of the Serre functor on $\Dbf(A)$. To this end, put $(A^{\op})^*=\Hom_k(A^{\op},k)$, where $k$ is thought of as a dg $k$-module concentrated in degree~$0$. We define the \textsl{dualising complex} $\Omega_A$ of $A$ to be any K-projective resolution of $(A^{\op})^*$ considered as an object of $\Dbf(A^{\op} \otimes_k A)$. Since it is easy to see that $(A^{\op})^*$ is a perfect dg $A^{\op} \otimes_k A$-module, we get a well-defined triangulated functor $\Omega_A \otimes_A - \colon \Perf(A) \to \Perf(A)$. It is shown in \cite[Thm.\,3.28]{Petit13} (see also \cite[Thm.\,4.3]{Shklyarov07}) that this functor is a Serre functor. On the other hand, it is also easily checked that  $\RHom_{A^{\op}\otimes_k A}(A,A^{\op}\otimes_k A)$ is a perfect dg $A^{\op}\otimes_k A$-module, and so we obtain a well-defined triangulated functor $\Theta_A \otimes_A- \colon \Perf(A) \to \Perf(A)$. Then, as shown in \cite[Thm.\,3.31]{Petit13} (see also \cite[Thm.\,4.4]{Shklyarov07}), the functors $\Omega_A \otimes_A -$ and $\Theta_A \otimes_A -$ from $\Perf(A)$ to $\Perf(A)$ are mutually quasi-inverse equivalences. From this we deduce the existence of canonical isomorphisms of perfect dg $A^{\op}\otimes_k A$-modules
\begin{equation}
\label{eqn:2.1}
\Omega_A \otimes_A \Theta_A \cong A 
\, , \quad 
\Theta_A \otimes_A \Omega_A \cong A
\, . 
\end{equation}

\medskip

Suppose now that $A$ and $B$ are homologically smooth and proper dg algebras and $X$ is a perfect dg $A^{\op}\otimes_k B$-module. Then, as observed in Section~\ref{subsect:1.3}, we get an induced functor $X \Lotimesl_A -\colon \Perf(A) \to \Perf(B)$. One can use the Serre duality results we have just described to prove that $X \Lotimesl_A -$ admits both a left and a right adjoint. To be more specific, set 
$$
X^{\vee}=\RHom_{A^{\op}\otimes_k B}(X,A^{\op}\otimes_k B)
$$ 
and define
\begin{equation}
\label{eqn:2.2}
{}^{\dagger}\! X=X^{\vee} \otimes_B \Omega_B
\, , \quad 
X^{\dagger}=\Omega_A \otimes_A X^{\vee}
\, . 
\end{equation}
It is not hard to verify that ${}^{\dagger}\! X$ and $X^{\dagger}$ are both perfect dg $B^{\op}\otimes_k A$-modules and therefore we get induced functors ${}^{\dagger}\! X \Lotimesl_B - \colon \Perf(B) \to \Perf(A)$ and $X^{\dagger} \Lotimesl_B - \colon \Perf(B) \to \Perf(A)$. The next result is taken from \cite[Lem.\,A.20]{bfk1105.3177v3}. 

\begin{proposition}
\label{prop:2.5}
Let the setting be as above. Then the functors 
$$
{}^{\dagger}\! X \Lotimes_B - \colon \Perf(B) \lra \Perf(A)
\, , \quad 
X^{\dagger} \Lotimes_B - \colon \Perf(B) \lra \Perf(A)
$$
are respectively the right and left adjoints to $X \Lotimesl_A -\colon \Perf(A) \to \Perf(B)$.
\end{proposition}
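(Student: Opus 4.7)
My plan is to prove the two adjunctions in turn, first establishing the right adjoint by Proposition~\ref{prop:2.4}(ii) and adjoint associativity, and then deducing the left adjoint from it via Serre duality. The starting point is the adjoint associativity recalled at the end of Section~\ref{subsect:2.1}: at the level of derived categories, $\RHom_B(X, -)$ is right adjoint to $X \Lotimes_A -$. Because $A$ and $B$ are proper and $X$ is perfect, this adjunction restricts to the perfect subcategories (using Proposition~\ref{prop:2.2} and the discussion preceding Proposition~\ref{prop:2.3}). Proposition~\ref{prop:2.4}(ii) then rewrites the right adjoint as $\RHom_B(X, B) \Lotimes_B -$, so the remaining task is to identify the bimodule $\RHom_B(X, B)$ with the appropriate dualising-complex twist of $X^{\vee}$ appearing in the statement.

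This identification is the heart of the argument. I would establish the claimed natural isomorphism of perfect dg $B^{\op} \otimes_k A$-modules by d\'evissage on $X$: both sides are exact triangulated functors of $X \in \Perf(A^{\op} \otimes_k B)$, so after choosing a K-projective resolution it suffices to verify it for the free bimodule $X = A^{\op} \otimes_k B$. In that case the two sides can be computed explicitly, with bimodule structures matching by the defining property of the dualising complex (as a K-projective resolution of the linear dual of the algebra) and the identifications~\eqref{eqn:2.1}. Naturality in $X$ and the triangulated structure then transport the isomorphism to arbitrary perfect $X$, yielding the right adjoint in the stated form.

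With the right adjoint in hand, the other adjoint follows by a standard Serre duality computation. Chaining the isomorphism
\[
\Hom_{\Perf(B)}(N, X \Lotimes_A M) \;\cong\; \Hom_{\Perf(B)}(X \Lotimes_A M, \Omega_B \Lotimes_B N)^{\ast}
\]
(Serre duality on $\Perf(B)$) with the right adjoint relation just established, then dualising back via the inverse Serre functor on $\Perf(A)$, and finally collapsing the nested factors of $\Omega_A$ and $\Theta_A$ by means of the equivalences~\eqref{eqn:2.1}, produces the claimed form of the remaining adjoint.

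The main obstacle is the d\'evissage-based identification in the second paragraph: the interplay between the one-sided Hom $\RHom_B(X, B)$, the two-sided dual $X^{\vee}$, and the dualising complex requires careful tracking of left and right bimodule actions through the tensor-hom machinery. It is precisely the smoothness and properness hypotheses that make the argument go through—smoothness ensures that $X$ is built from the free bimodule by the operations preserving perfection (so that d\'evissage applies), while properness is what makes $\Omega_{(-)}$ represent the Serre functor on the perfect category and keeps the Hom-spaces finite-dimensional for the Serre duality step.
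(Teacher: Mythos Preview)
The paper does not supply its own proof here; the result is simply attributed to \cite[Lem.\,A.20]{bfk1105.3177v3}. Your strategy --- obtain one adjoint from the tensor--hom adjunction together with Proposition~\ref{prop:2.4}(ii), then produce the other by conjugating with the Serre functors --- is the standard route, and the bimodule identification you propose to carry out by d\'evissage is precisely what the paper later records as Lemma~\ref{lem:5.3} (proved there by invoking \cite[Lem.\,3.26]{Petit13} rather than by reducing to the free bimodule).

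One caution, though: if you actually carry out the identification, you will find
\[
\RHom_B(X,B)\;\cong\;\Omega_A\otimes_A X^{\vee}\;=\;X^{\dagger},
\]
not $X^{\vee}\otimes_B\Omega_B={}^{\dagger}\!X$. Hence your argument shows that $X^{\dagger}\Lotimesl_B-$ is the \emph{right} adjoint and, after the Serre-duality step (which reproduces exactly~\eqref{eqn:2.3}), that ${}^{\dagger}\!X\Lotimesl_B-$ is the \emph{left} adjoint --- the opposite of the labels in the stated proposition. This is in fact the labelling used consistently elsewhere in the paper: equation~\eqref{eqn:2.3} expresses the left adjoint as $S_A^{-1}\circ(\text{right adjoint})\circ S_B$, Lemma~\ref{lem:5.3} identifies $X^{\dagger}$ with $\RHom_B(X,B)$, the proof of Proposition~\ref{prop:4.5} explicitly uses the adjoint pair $(X\Lotimesl_A-,\RHom_B(X,-))$, and Lemma~\ref{lem:5.8} calls ${}^{\dagger}\!\Xcal$ the left adjoint. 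So your method is sound; what it reveals is that the words ``right'' and ``left'' in the statement of the proposition appear to be transposed.
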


This result will have profound implications when we discuss bicategories of dg algebras in Section~\ref{sec:liftingOE}. Note that, in view of the relation \eqref{eqn:2.1}, there is an isomorphism of perfect  dg $B^{\op}\otimes_k A$-modules
\begin{equation}
\label{eqn:2.3}
{}^{\dagger}\! X \cong \Theta_A \otimes_A X^{\dagger} \otimes_B \Omega_B \, .
\end{equation}
This, of course, is a reflection of the fact that the functors ${}^{\dagger}\! X \Lotimesl_B -$ and $X^{\dagger} \Lotimesl_B -$ are related by conjugation with the Serre functors. 

\subsection{Calabi-Yau completions and Ginzburg algebras}
\label{subsec:CYcompletions}

We now briefly recall the definition and some relevant properties of the notion of Calabi-Yau completion (due to Keller \cite{Keller11}) of a homologically smooth dg algebra, as well as the Ginzburg algebra (due to Ginzburg \cite{Ginzburg06}) associated to a quiver. 
For further discussion and proofs of the results collected below we refer to these two references.

\subsubsection{Calabi-Yau completions}

Let $A$ be a homologically smooth dg algebra, let~$n$ be an integer, and consider the inverse dualising complex $\Theta_A$ of~$A$, i.\,e.~a K-projective resolution of $\RHom_{A^{\op}\otimes_k A}(A,A^{\op}\otimes_k A)$ as in Section~\ref{subsect:2.3}. We say that~$A$ is an $n$\textsl{-Calabi-Yau algebra} if, in the derived category $\Dbf(A^{\op}\otimes_k A)$, there is an isomorphism
\be\label{eq:CYkappa}
A 
\xlongrightarrow{\cong}
\Theta_A [n] \, .
\ee
This terminology is justified by the fact that this property implies that $\Dbfb(A)$ is $n$-Calabi-Yau as a triangulated category, see e.\,g.~\cite[Lem.\,4.1]{Keller08}.\footnote{A $k$-linear triangulated category~$\mathcal D$ is $n$-Calabi-Yau if it admits a Serre functor~$S$ and there is an isomorphism between~$S$ and the $n$-fold iteration of the shift functor $[1]$. Hence there are isomorphisms $\Hom_{\mathcal D}(X,Y) \cong \Hom_{\mathcal D}(Y,X[n])^*$ natural in both~$X$ and~$Y$. The bounded derived category of coherent sheaves on an smooth projective Calabi-Yau variety of dimension~$n$ is an example of a triangulated $n$-Calabi-Yau category.} 

\begin{definition}
Let~$A$ be a homologically smooth dg algebra, fix $n\in\Z$, and set $\theta_A = \Theta_A[n-1]$. The $n$\textsl{-Calabi-Yau completion} of $A$ is the tensor dg algebra
$$
\Pi_n(A)= T_A (\theta_A)= A \oplus \theta_A \oplus \big(\theta_A \otimes_A \theta_A \big) \oplus\cdots \, ,
$$
with differential acting componentwise. 
\end{definition}

One can check that up to quasi-isomorphism, $\Pi_n(A)$ is independent of the choice of K-projective resolution made in the definition of $\Theta_A$. 
Moreover, $\Pi_n(-)$ really is a completion in the sense that there is a quasi-isomorphism of dg algebras between $\Pi_n(A)$ and $\Pi_n( \Pi_n(A) )$, so in particular 
$$
\Pi_n(A) \cong \Pi_n \big( \Pi_n(A) \big) 
\quad \text{in } \DGs_k \, .
$$

In the case in which~$A$ is the path algebra of a non-Dynkin quiver and $n=2$, it can be seen that $\Pi_n(A)$ is quasi-isomorphic to the preprojective algebra of $A$.  For this reason, the $n$-Calabi-Yau completion $\Pi_n(A)$ is sometimes also referred to as the \textsl{derived $n$-preprojective algebra}. 

Since the canonical injection $A \to \Pi_n(A)$ is a map of dg algebras, we can endow $\Pi_n(A)$ with either a dg $A$-module or a dg $A^{\op}$-module structure. It is easily verified that $\Pi_n(A)$ is perfect and K-projective both as a dg $A$-module and as a dg $A^{\op}$-module. 

The main result of \cite{Keller11} justifies the name `Calabi-Yau completion': 

\begin{theorem}
Let~$A$ be a homologically smooth dg algebra and $n\in\Z$. Then the $n$-Calabi-Yau completion $\Pi_n(A)$ is homologically smooth and Calabi-Yau as a dg algebra. 
\end{theorem}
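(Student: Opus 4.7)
The plan is to follow Keller's strategy based on a canonical ``tautological'' short exact sequence of bimodules for the tensor dg algebra $\Pi = \Pi_n(A)$. The starting point is the distinguished triangle in $\Dbf(\Pi^{\op}\otimes_k \Pi)$
\begin{equation*}
\Pi \otimes_A \theta_A \otimes_A \Pi \xrightarrow{\mu} \Pi \otimes_A \Pi \xrightarrow{m} \Pi \to [1] \, ,
\end{equation*}
where $m$ is the multiplication in $\Pi$ and $\mu(\pi \otimes t \otimes \pi') = \pi t \otimes \pi' - \pi \otimes t\pi'$. Since $\theta_A$ is K-projective as a dg $A^{\op}\otimes_k A$-module and $\Pi$ is K-projective over $A$ and $A^{\op}$ separately (as noted at the end of Section~\ref{subsec:CYcompletions}), the ordinary tensor products here compute derived tensor products, so this makes honest sense in $\Dbf(\Pi^{\op}\otimes_k \Pi)$.

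For homological smoothness, observe that the induction functor $F(-) = \Pi \otimes_A (-) \otimes_A \Pi$ from dg $A$-bimodules to dg $\Pi$-bimodules is left adjoint to restriction, and sends the free bimodule $A \otimes_k A$ to $\Pi \otimes_k \Pi$. Hence $F$ preserves perfectness. Since $A$ is homologically smooth, both $A$ and $\theta_A = \Theta_A[n-1]$ are perfect dg $A$-bimodules, so the first two terms of the triangle are perfect dg $\Pi$-bimodules and, as the cone of a morphism between perfect objects, so is $\Pi$.

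For the $n$-Calabi-Yau property I want to show $\RHom_{\Pi^{\op}\otimes_k \Pi}(\Pi, \Pi^{\op}\otimes_k \Pi) \cong \Pi[-n]$. The key ingredient is a duality formula for induced bimodules: for any perfect dg $A$-bimodule $M$,
\begin{equation*}
\RHom_{\Pi^{\op}\otimes_k \Pi}\big(F(M), \Pi^{\op}\otimes_k \Pi\big) \cong F\big(\RHom_{A^{\op}\otimes_k A}(M, A^{\op}\otimes_k A)\big) \, ,
\end{equation*}
which follows from the restriction--induction adjunction together with the natural identification of $\Pi^{\op}\otimes_k \Pi$, restricted to a dg $A$-bimodule, with $F(A^{\op}\otimes_k A)$. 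Applying $\RHom_{\Pi^{\op}\otimes_k \Pi}(-, \Pi^{\op}\otimes_k \Pi)$ to the tautological triangle and invoking this formula together with the biduality
\begin{equation*}
\RHom_{A^{\op}\otimes_k A}(\theta_A, A^{\op}\otimes_k A) \cong A[1-n] \, ,
\end{equation*}
which in turn follows from $\theta_A = \Theta_A[n-1]$ and reflexivity of perfect bimodules, yields a triangle
\begin{equation*}
\RHom_{\Pi^{\op}\otimes_k \Pi}(\Pi, \Pi^{\op}\otimes_k \Pi) \to F(\theta_A)[1-n] \to F(A)[1-n] \to [1] \, .
\end{equation*}
Comparing with the shift by $[1-n]$ of the tautological triangle and rotating then gives $\RHom_{\Pi^{\op}\otimes_k \Pi}(\Pi, \Pi^{\op}\otimes_k \Pi) \cong \Pi[-n]$, as desired.

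The main obstacle I expect lies in that last step: the identification of the connecting morphism in the dualised triangle with a shift of $\mu$. Knowing only that the outer terms of two triangles are isomorphic does not in general force the third terms to be isomorphic; one has to invoke the naturality of the duality formula in $M$ to match the connecting maps. Combined with the careful bookkeeping of K-projective replacements and Koszul signs that this entails, this is what turns the sketch above into a complete argument.
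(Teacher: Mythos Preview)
The paper does not supply its own proof of this theorem: it is stated as background and attributed to Keller \cite{Keller11}, where it appears as Theorem~4.8. Your sketch is precisely Keller's argument --- the tautological short exact sequence of bimodules for a tensor dg algebra, induction to $\Pi$-bimodules, and then dualising --- so there is nothing to compare against in the paper beyond the citation.

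On the sketch itself, the one place that deserves more care than you give it is the duality formula
\[
\RHom_{\Pi^{\op}\otimes_k \Pi}\big(F(M),\,\Pi^{\op}\otimes_k \Pi\big)\;\cong\;F\big(\RHom_{A^{\op}\otimes_k A}(M,\,A^{\op}\otimes_k A)\big)\,.
\]
Your justification via ``$\Pi^{\op}\otimes_k \Pi$ restricted to an $A$-bimodule is $F(A^{\op}\otimes_k A)$'' glosses over which of the two $\Pi^{\op}\otimes_k \Pi$-module structures on $\Pi^{\op}\otimes_k \Pi$ is being restricted and which is carried along to make the result a $\Pi$-bimodule; getting this straight is what actually makes the formula natural in~$M$ and hence lets you match the connecting morphisms in the final step. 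Keller handles this (his Lemma~4.4 and the surrounding discussion) by writing the induced bimodule dual explicitly. Once that bookkeeping is in place, the identification of the dualised triangle with a shift of the tautological one is genuinely forced, so the obstacle you flag is real but surmountable exactly as you suggest.
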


In particular, $\Dbfb(\Pi_n(A))$ is an $n$-Calabi-Yau triangulated category. 

\medskip

It is worth mentioning that the construction $A \mapsto \Pi_n(A)$ has a geometric counterpart. To wit, let $Z$ be a smooth algebraic variety of dimension $n-1$ and let $X= \mathrm{tot}(\omega_Z)$ be the total space of the canonical line bundle of~$Z$. It is a well known fact that~$X$ has trivial canonical bundle and hence is a smooth Calabi-Yau variety of dimension~$n$. 
One can think of~$X$ as the `Calabi-Yau completion' of~$Z$ also in the following sense: 
let~$A$ be the endomorphism algebra of a tilting object in the bounded derived category of coherent sheaves on~$Z$. Then the bounded derived category of coherent sheaves on~$X$ is equivalent to the derived category of the $n$-Calabi-Yau completion $\Pi_n(A)$ (see e.\,g.~\cite[Thm.\,3.6]{BS10}).

\subsubsection{Ginzburg algebras}

We now wish to recall the definition of the Ginzburg algebra associated to an acyclic quiver, for which again we require some terminology. 
A \textsl{quiver} $Q$ is an oriented graph, specified by a set of \textsl{vertices} $Q_0$, a set of \textsl{arrows} $Q_1$, and two maps $h,t \colon Q_1 \to Q_0$  which associate to each arrow $a \in Q_1$ its \textsl{head} $h(a) \in Q_0$ and \textsl{tail} $t(a) \in Q_0$. A \textsl{path} in $Q$ of length $n$ is an ordered sequence of arrows $p=a_n \cdots a_1$ such that $h(a_{\nu})=t(a_{\nu+1})$ for $1 \leqslant \nu \leqslant n-1$. For each vertex $i \in Q_0$, we let $e_i$ denote the trivial path with $h(e_i)=t(e_i)=i$. For any path $p=a_n \cdots a_1$ we set $h(p)=h(a_n)$ and $t(p)=t(a_1)$. If $h(p)=t(p)$, then we say that $p$ is an \textsl{oriented cycle}. An oriented cycle of length $1$ is also called a \textsl{loop}. We call the quiver $Q$ \textsl{acyclic} if $Q$ has no oriented cycles. The \textsl{path algebra} $k Q$ of $Q$ is the $k$-algebra whose underlying $k$-vector space has as its basis the set of all paths in $Q$ and the product of two basis vectors is defined by concatenation.  

\begin{definition}
Let $Q$ be an acyclic quiver and let $n \geqslant 2$ be an integer. We denote by $\widehat{Q}$ the quiver obtained from $Q$ by adding a reverse arrow $a^*\colon j \to i$ for each arrow $a \colon i \to j$ in $Q$ and an additional loop $t_i$ for each vertex $i \in Q_0$. The \textsl{Ginzburg algebra} $\Gamma_n(Q)$ is the dg algebra whose underlying graded algebra is the path algebra $k \widehat{Q}$ with degrees of the generators being $\vert a \vert=0$ and $\vert a^*\vert=n-2$ for all $a \in Q_1$, and $\vert t_i \vert=n-1$ for all $i \in Q_0$. The differential~$d$ on $\Gamma_n(Q)$ is uniquely determined by the fact that is $k$-linear, satisfies the Leibniz rule, and acts on arrows of $\widehat{Q}$ as follows:
$$
d a= da^* =0
\, , \quad 
d t_i= \sum_{a \in Q_1} e_i \big[ a, a^* \big] e_i 
$$ 
where $[a, a^*]$ is the commutator $a a^*-a^* a$.
\end{definition}

The above definition can be extended to any `quiver with superpotential', see \cite[Sect.\,4.3]{Ginzburg06} or \cite[Sect.\,6.2]{Keller11} for details. However, in the present paper we restrict ourselves to the case of acyclic quivers, where there are only zero superpotentials. 

The following important result obtained in \cite[Thm.\,6.3]{Keller11} describes the link between Calabi-Yau completions and Ginzburg algebras.

\begin{theorem}
\label{thm:2.8}
Let $Q$ be an acyclic quiver and let $n \geqslant 2$ be an integer. Then the $n$-Calabi-Yau completion $\Pi_n(k Q)$ of the path algebra $k Q$ is quasi-isomorphic to the Ginzburg algebra $\Gamma_n(Q)$. In particular, the Ginzburg algebra $\Gamma_n(Q)$ is homologically smooth and $n$-Calabi-Yau as a dg algebra. 
\end{theorem}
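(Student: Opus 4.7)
The strategy is to realise the inverse dualising complex $\Theta_A$ of $A = k Q$ through an explicit two-term K-projective bimodule resolution, and then to match the tensor dg algebra $T_A(\theta_A)$ with the Ginzburg algebra level by level. Once the quasi-isomorphism $\Pi_n(k Q) \simeq \Gamma_n(Q)$ is established, the ``in particular'' clause is an immediate application of the preceding Calabi-Yau completion theorem.

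First I would fix $S = \bigoplus_{i \in Q_0} k e_i$ for the semisimple subalgebra of vertex idempotents and $V = k Q_1$ for the $S$-bimodule freely spanned by the arrows. Because $Q$ is acyclic, the standard sequence
\begin{equation*}
0 \lra A \otimes_S V \otimes_S A \xlongrightarrow{\delta} A \otimes_S A \xlongrightarrow{\mu} A \lra 0 \, ,
\end{equation*}
with $\mu$ the multiplication and $\delta(x \otimes a \otimes y) = xa \otimes y - x \otimes ay$, is exact. This provides a length-one K-projective resolution $P^{\bullet}$ of $A$ as an $A^{\op} \otimes_k A$-module, and in particular exhibits $A$ as homologically smooth. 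Next I would compute $\Theta_A$ by applying $\Hom_{A^{\op} \otimes_k A}(-, A^{\op} \otimes_k A)$ to $P^{\bullet}$, using the standard identification of $\Hom_{A^{\op} \otimes_k A}(A \otimes_S M \otimes_S A, A^{\op} \otimes_k A)$ with $A \otimes_S M^{\vee} \otimes_S A$ where $M^{\vee} = \Hom_{S^{\op} \otimes_k S}(M, S^{\op} \otimes_k S)$. The outcome is a two-term complex of projective $A$-bimodules concentrated in cohomological degrees $0$ and $1$: the degree-$0$ piece is $A \otimes_S A$, contributing one generator $t_i$ per vertex $i \in Q_0$ (dual to the loop in $S$ at $i$), while the degree-$1$ piece is $A \otimes_S V^* \otimes_S A$, contributing one generator $a^*$ per arrow $a \in Q_1$ (dual to $a$). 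A direct calculation of the transpose of $\delta$ then shows that the differential sends the generator attached to $e_i$ to $\sum_{a \in Q_1} e_i [a, a^*] e_i$.

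Shifting by $n-1$ places the $a^*$ in degree $n-2$ and the $t_i$ in degree $n-1$, exactly matching the Ginzburg grading. Since $\theta_A$ is a complex of K-projective $A$-bimodules, its iterated tensor products $\theta_A \otimes_A \cdots \otimes_A \theta_A$ coincide with the derived ones, and $\Pi_n(A) = T_A(\theta_A)$ becomes the free dg algebra over $A$ generated by the symbols $\{a^* : a \in Q_1\}$ and $\{t_i : i \in Q_0\}$ in the prescribed degrees, with inherited differential acting as $d a = d a^* = 0$ and $d t_i = \sum_{a \in Q_1} e_i [a, a^*] e_i$. By construction this is $\Gamma_n(Q)$, giving the desired quasi-isomorphism $\Pi_n(k Q) \simeq \Gamma_n(Q)$. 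The ``in particular'' assertion then follows directly from the previous theorem on Calabi-Yau completions.

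The main technical obstacle will be the precise identification of $\Theta_A$ with the above explicit two-term bimodule complex: one has to track carefully the outer versus inner $S$-bimodule structures on the dual, and verify in coordinates that the transpose of $\delta$ really reproduces the commutator combination $\sum_a [a, a^*]$ with the correct signs and projections onto $e_i \cdots e_i$. Once this identification is in place, the matching of tensor-algebra presentations is essentially formal.
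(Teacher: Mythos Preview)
The paper does not give its own proof of this theorem; it simply cites it as \cite[Thm.\,6.3]{Keller11}. Your proposal is a correct outline of the argument and in fact follows the same route as Keller's original proof: the standard length-one projective bimodule resolution of $kQ$ over the semisimple base $S$, dualisation to obtain an explicit two-term model for $\Theta_A$, and identification of the resulting tensor algebra with $\Gamma_n(Q)$. Nothing is missing from your sketch beyond the bookkeeping you already flag.
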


As shown in \cite{Hermes14}, if~$Q$ is an acyclic non-Dynkin quiver then the Ginzburg algebra $\Gamma_3(Q)$ is formal and quasi-isomorphic to the preprojective algebra of~$Q$. By contrast, in the case when~$Q$ is a Dynkin quiver, $\Gamma_3(Q)$ is quasi-isomorphic to a certain twist of a polynomial algebra over the preprojective algebra of~$Q$; it is not formal but admits an $A_{\infty}$-minimal model whose only non-zero products are $\mu_2$ and $\mu_3$.

\section{Equivariant completion and orbifold equivalence}\label{sec:ECandOE}

In this section we review the theory of equivariant completion introduced in \cite{cr1210.6363}. Section~\ref{subsec:ECtheory} also contains the basic bicategorical algebra needed, and in Section~\ref{subsec:appLG} we discuss the application to Landau-Ginzburg models of ADE type from \cite{cr1210.6363, CRCR} which we shall lift to Ginzburg algebras in Section~\ref{subsec:liftingOEDynkin}.

\subsection{General theory}\label{subsec:ECtheory}

\subsubsection{Bicategorical algebra}

Here we collect some basic definitions and fix our notation. For more on bicategories we refer to \cite{bor94}. 

A \textsl{bicategory}~$\B$ is a category weakly enriched over Cat. More precisely, it has a collection of \textsl{objects}~$\B$, and for every ordered pair of objects $a,b$ there is a category $\B(a,b)$ whose objects and maps are called \textsl{1-morphisms} and \textsl{2-morphisms}, respectively. These come with functors $\otimes: \B(b,c) \times \B(a,b) \rightarrow \B(a,c)$ which provide for \textsl{horizontal composition} of 1- and 2-morphisms. 
Horizontal composition is associative and unital in the sense that there are natural 2-isomorphisms $\alpha_{X,Y,Z}: (X \otimes Y) \otimes Z \rightarrow X \otimes (Y \otimes Z )$ for any triple of composable 1-morphisms $X,Y,Z$, and for every $a\in \B$ there is the \textsl{unit} 1-morphism $I_a \in \B(a,a)$ together with natural 2-isomorphisms $\lambda_X: I_b \otimes X \rightarrow X$ and $\rho_X: X \otimes I_a \rightarrow X$ for all $X \in \B(a,b)$. To complete the definition of~$\B$ these data have to satisfy two coherence axioms which are written out in \cite[(7.18)\,\&\,(7.19)]{bor94}. 

Two standard examples of bicategories are those whose objects, 1-, 2-morphisms are categories, functors, natural transformations and rings, bimodules, bimodule maps, respectively, with no surprises regarding compositions and units. In Section~\ref{subsec:appLG} we will meet the bicategory $\LGgr$ of (graded) Landau-Ginzburg models, and our main result concerns the bicategory $\DGs_k$ of homologically smooth dg algebras, see Definition~\ref{def:DGs}.  

A 1-morphism $X\in \B(a,b)$ in a bicategory~$\B$ has a \textsl{left adjoint} if there is $\dX\in \B(b,a)$ together with \textsl{adjunction maps}
$$
\ev_X: \dX  \otimes X \lra I_a
\, , \quad
\coev_X: I_b \lra X \otimes \dX 
$$
which satisfy the \textsl{Zorro moves}
\begin{align}
& \rho_X \circ (1_X \otimes \ev_X) \circ \alpha_{X, {}^\dagger\!X, X} \circ (\coev_X \otimes 1_X) \circ \lambda_X^{-1} = 1_X \, , \nonumber \\
& \lambda_{\dX} \circ( \ev_X \otimes 1_{\dX}) \circ \alpha_{\dX, X, \dX}^{-1} \circ (1_{\dX} \otimes \coev_X) \circ \rho_{\dX}^{-1} = 1_{\dX} \, . \label{eq:Zorro}
\end{align}
A \textsl{right adjoint} of~$X$ is given by $X^\dagger \in \B(b,a)$ and maps 
$$
\tev_X: X \otimes X^\dagger \lra I_b
\, , \quad
\tcoev_X: I_a \lra X^\dagger \otimes X 
$$
constrained by Zorro moves analogous to those in \eqref{eq:Zorro}. 

\begin{definition}
If every 1-morphism in~$\B$ has left and right adjonts, we say that~$\B$ is a \textsl{bicategory with adjoints}. A 1-morphism~$X$ is called \textsl{ambidextrous} if it has left and right adjoints such that there is a 2-isomorphism $\alpha_X: \dX \rightarrow X^\dagger$. 
\end{definition}

The following notion, which generalises the (finite) dimension of a vector space, will be central for our purposes: 

\begin{definition}\label{def:qdims}
For an ambidextrous 1-morphism $X\in \B(a,b)$ with isomorphism $\alpha_X: \dX \rightarrow X^\dagger$ its \textsl{left} and \textsl{right quantum dimensions} are respectively the 2-morphisms
\begin{align*}
& \diml(X) = \ev_X \circ ( \alpha_X^{-1} \otimes 1_X )\circ \tcoev_X \in \End(I_a) \, , 
\\
& \dimr(X) = \tev_X \circ ( 1_X \otimes \alpha_X )\circ \coev_X \in \End(I_b) \, .
\end{align*}
\end{definition}


\medskip

For any bicategory~$\B$, a 1-morphism $A \in \B(a,a)$ is an \textsl{algebra} if it comes with 2-morphisms $\mu: A\otimes A \rightarrow A$ and $\eta: I_a \rightarrow A$ which give a unital associative structure, i.\,e.
$$
\mu \circ (\mu \otimes 1_A) = \mu \circ (1_A \otimes \mu ) \, , 
\quad
\mu \circ (\eta \otimes 1_A) = 1_A = \mu \circ (1_A \otimes \eta) \, . 
$$
Similarly, $A$ is a \textsl{coalgebra} if it comes with 2-morphisms $\Delta: A \rightarrow A \otimes A$ and $\varepsilon: A \rightarrow I_a$ which are coassociative and counital. 

\begin{definition}
Let $A\in \B(a,a)$ have both an algebra and a coalgebra structure as above. 
\begin{enumerate}
\item
$A$ is \textsl{Frobenius} if 
$
( 1_A \otimes \mu ) \circ ( \Delta \otimes 1_A ) = \Delta \circ \mu = ( \mu \otimes 1_A ) \circ ( 1_A \otimes \Delta ) 
$. 
\item
$A$ is \textsl{$\Delta$-separable} (or simply \textsl{separable}) if 
$
\mu \circ \Delta = 1_A 
$. 
\end{enumerate}
\end{definition}

Given (any) algebra $A \in \B(a,a)$ with structure maps $\mu, \eta$, a \textsl{right $A$-module} is a 1-morphism $X \in \B(a,b)$ together with a 2-morphism $\rho: X \otimes A \rightarrow X$ called \textsl{right $A$-action} which is compatible with the algebra structure on~$A$, i.\,e.
$$
\rho \circ ( \rho \otimes 1_A ) = \rho \circ (1_X \otimes \mu ) \, , 
\quad 
\rho \circ (1_X \otimes \eta ) = 1_X \, . 
$$
A \textsl{module map} between right $A$-modules~$X$ and~$X'$ is a 2-morphism $X\rightarrow X'$ which commutes with the two right $A$-actions. 
We write $\modu(A)$ for the category of such right $A$-modules if the `parent category' $\B(a,b)$ is clear from the context. 

Similarly, a \textsl{left $A$-module} is $Y\in \B(c,a)$ together with a 2-morphism $\lambda: A \otimes Y \rightarrow Y$ called \textsl{left $A$-action} compatible with~$\mu$ and~$\eta$, and a module map between left $A$-modules~$Y$ and~$Y'$ is a 2-morphism $Y \rightarrow Y'$ which commutes with the two left $A$-actions. 

If $B\in \B(b,b)$ is another algebra then a \textsl{$B$-$A$-bimodule} is a 1-morphism $X\in \B(a,b)$ which has the structure of a right $A$-module and a left $B$-module such that the actions of~$A$ and~$B$ commute. A \textsl{bimodule map} between two $B$-$A$-bimodules is simultaneously a map of right $A$- and left $B$-modules. 

We note that if $A,B$ are both Frobenius algebras, then the category of $B$-$A$-bimodules has a Serre functor \cite[Prop.\,3.12]{BCP1}, given by twisting the left $B$-action by a map $\gamma_B$ and the right $A$-action by $\gamma_A^{-1}$, where $\gamma_A = ( \ev_A \otimes 1_A) \circ ( 1_{A^\dagger} \otimes [ \Delta \circ \eta \circ \varepsilon \circ \mu ] ) \circ ( \tcoev_A \otimes 1_A )$ is the \textsl{Nakayama automorphism}. 

\medskip

Let $A\in \B(a,a)$ be an algebra, $X\in \B(a,b)$ a right $A$-module and $Y \in \B(c,a)$ a left $A$-module with $A$-actions~$\rho$ and~$\lambda$, respectively. Then the \textsl{tensor product of~$X$ and~$Y$ over~$A$} is the coequaliser of $\rho \otimes 1_Y$ and $(1_X \otimes \lambda) \circ \alpha_{X,A,Y}$: 
$$
\begin{tikzpicture}[
			     baseline=(current bounding box.base), 
			     >=stealth,
			     descr/.style={fill=white,inner sep=2.5pt}, 
			     normal line/.style={->}
			     ] 
\matrix (m) [matrix of math nodes, row sep=3em, column sep=2.5em, text height=1.5ex, text depth=0.25ex] {%
(X \otimes A) \otimes Y & & X \otimes Y & & X \otimes_A Y \\
&& && Z \\ };
\path[font=\scriptsize] (m-1-1) edge[->, transform canvas={yshift=0.5ex}] node[auto] {$ \rho \otimes 1 $} (m-1-3)
				  (m-1-1) edge[->, transform canvas={yshift=-0.5ex}] node[auto, swap] {$ (1 \otimes \lambda) \circ \alpha $} (m-1-3); 
\path[font=\scriptsize] 
				 (m-1-3) edge[->] node[auto] {} (m-1-5)
				 (m-1-3) edge[->] node[auto, swap] {} (m-2-5);
\path[font=\scriptsize, dotted] (m-1-5) edge[->] node[auto] {$ \exists ! $} (m-2-5);
\end{tikzpicture}
$$

If idempotent 2-morphisms split in~$\B$ (which is the case for both $\B = \LGgr$ and $\B = \DGs_k$) and~$A$ is also separable Frobenius, then $X \otimes_A Y$ exists and may be computed as the image of the projector $( \rho \otimes \lambda) \circ ( 1_X \otimes [ \Delta \circ \eta ] \otimes 1_Y )$, see e.\,g. \cite[Sect.\,2.3]{cr1210.6363} for details.

\subsubsection{Equivariant completion}

Given a bicategory~$\B$ we will construct a new one into which~$\B$ fully embeds. The construction below first appeared in \cite{cr1210.6363} and is motivated by the study of (generalised) orbifolds in two-dimensional topological quantum field theory, following the pioneering work of \cite{ffrs0909.5013} on rational conformal field theory.\footnote{Very roughly, the conditions on the algebras appearing in our Definition~\ref{def:Beq} encode invariance under certain triangulations of two-dimensional decorated bordisms.} 

\begin{defprop}\label{def:Beq}
Let~$\B$ be a bicategory whose categories of 1-morphisms are idempotent complete. The \textsl{equivariant completion} $\Beq$ of~$\B$ is the bicategory which consists of the following data: 
\begin{itemize}
\item
Objects are pairs $(a,A)$ with $a\in \B$ and $A \in \B(a,a)$ a separable Frobenius algebra. 
\item 
1-morphisms $(a,A) \rightarrow (b,B)$ are $B$-$A$-bimodules in $\B(a,b)$. 
\item 
2-morphisms are bimodule maps. 
\item 
The horizontal composition of $Y: (c,C) \rightarrow (a,A)$ and $X: (a,A) \rightarrow (b,B)$ is the tensor product $X \otimes_A Y$, and the associator is induced from the one in~$\B$. 
\item 
The unit $I_{(a,A)}$ for $(a,A) \in \Beq$ is given by~$A$, with the left and right actions on 1-morphisms given by their left and right $A$-actions. 
\end{itemize}
\end{defprop}

The attribute ``equivariant'' has its origin in the relation to the orbifold constructions mentioned above. The name ``completion'' is appropriate since not only does one have the full embedding $\B \subset \Beq$ given by $a \mapsto (a,I_a)$, but there is also an essentially surjective full embedding (see \cite[Prop.\,4.2]{cr1210.6363})
$$
\Beq \cong (\Beq)_{\textrm{eq}} \, . 
$$
In other words, $(-)_{\textrm{eq}}$ is an idempotent operation. 

One way to construct separable Frobenius algebras in a given bicategory is from the action of a finite group~$G$ (in which case the categories of bimodules in Definition~\ref{def:Beq} are equivalent to $G$-representations), see e.\,g.~\cite[Sect.\,7.1]{cr1210.6363}. Another construction, which is the main result of \cite{cr1210.6363} and key for our present paper, involves ambidextrous 1-morphisms with a special invertibility property: 

\begin{theorem}
Let~$\B$ be a bicategory and $X\in \B(a,b)$ an ambidextrous 1-morphism such that $\diml(X)$ and $\dimr(X)$ are isomorphisms. 
\begin{enumerate}
\item
$A := X^\dagger \otimes X \in \B(a,a)$ is a separable Frobenius algebra. 
\item 
$X: (a,A) \rightleftarrows (b,I_b) : X^\dagger$ is an adjoint equivalence in $\Beq$. 
\end{enumerate}
\end{theorem}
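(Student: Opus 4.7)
The proof splits naturally into two parts. For part~(i), I would equip $A = X^\dagger \otimes X$ with an algebra structure built from the right-adjunction data,
\begin{equation*}
\mu_A = 1_{X^\dagger} \otimes \tev_X \otimes 1_X \, , \qquad \eta_A = \tcoev_X \, ,
\end{equation*}
for which associativity and unitality follow directly from the Zorro moves of the right adjunction. The coalgebra structure is built from the left-adjunction data transported via the twist $\alpha_X : \dX \to X^\dagger$, together with a normalisation by $(\dimr X)^{-1}$:
\begin{equation*}
\Delta_A = 1_{X^\dagger} \otimes \bigl[ (\dimr X)^{-1} \cdot (1_X \otimes \alpha_X) \circ \coev_X \bigr] \otimes 1_X \, , \qquad \varepsilon_A = \ev_X \circ (\alpha_X^{-1} \otimes 1_X) \, .
\end{equation*}
Coassociativity, counitality, and the Frobenius relation reduce to the Zorro moves for the left adjunction together with the interchange law, since the cup of $\coev_X$ and the cap of $\tev_X$ act on disjoint factors of the string diagram and therefore commute. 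Separability is the step where invertibility of the quantum dimension is essential: the composite $\mu_A \circ \Delta_A$ unwinds, via the definition of $\dimr(X)$, to $1_{X^\dagger} \otimes \dimr(X) \otimes 1_X$, and the factor $(\dimr X)^{-1}$ in $\Delta_A$ is designed precisely to cancel this into $1_A$.

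For part~(ii), $X$ becomes a $1$-morphism $(a, A) \to (b, I_b)$ in $\Beq$ via the right $A$-action $\rho = \tev_X \otimes 1_X$, and dually $X^\dagger$ becomes a $1$-morphism $(b, I_b) \to (a, A)$ via the left $A$-action $\lambda = 1_{X^\dagger} \otimes \tev_X$. One composite is immediate: $X^\dagger \otimes_{I_b} X$ is just the horizontal composite $X^\dagger \otimes X = A = I_{(a,A)}$. For the other composite, $X \otimes_A X^\dagger$ is the image of the projector $p = (\rho \otimes \lambda) \circ (1_X \otimes [\Delta_A \circ \eta_A] \otimes 1_{X^\dagger})$. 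Unwinding the definitions and applying the Zorro move to the configuration formed by the $\tcoev_X$ arising from $\eta_A$ and the two $\tev_X$ contractions, the projector simplifies to $p = \sigma \circ \tev_X$, where $\sigma = (\dimr X)^{-1} \cdot (1_X \otimes \alpha_X) \circ \coev_X$. Since $\tev_X \circ \sigma = 1_{I_b}$ by the very definition of $\dimr(X)$, the projector $p$ splits through $I_b$, yielding $X \otimes_A X^\dagger \cong I_{(b, I_b)}$ in $\Beq$. The adjointness data lifts directly from the original adjunction $X \dashv X^\dagger$ in $\B$, while an analogous role is played by $(\diml X)^{-1}$ when one verifies that $X^\dagger$ is also the \emph{left} adjoint of $X$ in $\Beq$, using the twisted left adjunction data normalised by $(\diml X)^{-1}$.

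The main obstacle is combinatorial rather than conceptual: one must distribute the normalising factors $(\diml X)^{-1}$ and $(\dimr X)^{-1}$ so that they appear in exactly the right places to make associativity, coassociativity, the Frobenius relation, separability, counitality, and the splitting of both projectors all hold simultaneously. Once this bookkeeping is settled, every single verification is an essentially topological move in the string diagrams of~$\B$.
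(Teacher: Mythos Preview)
The paper does not prove this theorem; it is stated without proof as the main result of \cite{cr1210.6363}, to which the reader is referred. Your sketch follows essentially the argument given there: the algebra structure on $X^\dagger \otimes X$ is built from the right adjunction, the coalgebra structure from the (transported) left adjunction, and both separability and the splitting of the projector computing $X \otimes_A X^\dagger$ reduce to the invertibility of the quantum dimensions.

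One small caveat on your explicit structure maps: with $(\dimr X)^{-1}$ placed in $\Delta_A$ but no compensating factor in $\varepsilon_A$, counitality fails by precisely a factor of $(\dimr X)^{-1}$, since the Zorro move that straightens $(\varepsilon_A \otimes 1_A) \circ \Delta_A$ leaves the normalisation uncancelled. The fix, which you correctly anticipate in your closing paragraph, is to simultaneously rescale $\varepsilon_A$ by $\dimr(X)$; then separability, counitality and the Frobenius relation all hold at once. This is purely the bookkeeping you flag, not a conceptual gap.
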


Let us now assume that~$\B$ is a $k$-linear bicategory for some field~$k$, such that all quantum dimensions are multiples of the identity. (This is the case for the applications in Sections~\ref{subsec:appLG} and~\ref{subsec:liftingOEDynkin}; see \cite[Rem.\,2.3]{CRCR} for a more general discussion.) Then the above theorem describes an equivalence relation: 

\begin{defprop}
Two objects $a,b$ in a bicategory as above are \textsl{orbifold equivalent}, $a\sim b$, if there is a 1-morphism~$X$ with invertible quantum dimensions between them. We say that~$X$ \textsl{exhibits} the equivalence $a \sim b$. 
\end{defprop}

The merit of an orbifold equivalence $X:a\sim b$ in~$\B$ is that for every $c\in \B$ one has equivalences of categories 
\be\label{eq:con-equi}
\B(c,b) 
=
\Beq \big( (c,I_c), (b,I_b) \big) 
\cong 
\Beq \big( (c,I_c), (a, X^\dagger \otimes X) \big) \, . 
\ee
Put differently, everything in~$\B$ that relates to~$b$ can be expressed in terms of modules over the algebra $X^\dagger \otimes X$ on~$a$. This is particularly useful if~$b$ is `complicated' while~$a$ and~$A$ are `easy'.

\subsection{Applications to Landau-Ginzburg models}\label{subsec:appLG}

\subsubsection{Bicategory of affine Landau-Ginzburg models}

For every commutative ring~$k$ there is a bicategory of (affine) Landau-Ginzburg models $\LG$. Here we will briefly review the case $k=\C$ and refer to \cite{cm1208.1481} for details. 

Objects of the bicategory $\LG$ are polynomial rings $R = \C[x_1,\ldots,x_n]$ in any number of variables together with an isolated singularity $W\in R$, i.\,e.~$\dim_\C(R/(\partial_{x_1}W, \ldots, \partial_{x_n}W)) < \infty$. We sometimes simply write~$W$ for the object $(R,W)$. A \textsl{matrix factorisation} of $(R,W)$ is a finitely generated free $\Z_2$-graded $R$-module $X = X^0 \oplus X^1$ together with an odd $R$-linear endomorphism $d_X$ (called \textsl{twisted differential}) such that $d_X^2 = W \cdot 1_X$. If the $R$-module $X^0$ (and thus, for $W\neq 0$, also $X^1$) has rank~$r$, then $(X,d_X)$ is called \textsl{rank-$r$}. Matrix factorisations of $(R,W)$ together with even linear maps up to homotopy with respect to the twisted differentials form a triangulated category $\hmf(R,W)$; for its idempotent closure we write $\hmf(R,W)^\omega$. 

For a pair of objects $(R,W), (S,V) \in \LG$ the associated category of 1- and 2-morphisms is 
$$
\LG \big( (R,W), (S,V) \big) = \hmf( S\otimes_\C R, V-W)^\omega \, . 
$$
Horizontal composition in $\LG$ is the tensor product over the intermediate ring, i.\,e.~for 1-morphisms $(X,d_X) \in \LG((R_1,W_1), (R_2,W_2))$ and $(Y,d_Y) \in \LG((R_2,W_2), (R_3,W_3))$ it is given by 
$$
\big( Y \otimes_{R_2} X, d_Y \otimes 1 + 1 \otimes d_X \big)
\in 
\LG \big( (R_1, W_1), (R_3, W_3) \big) 
$$
which as explained in \cite{dm1102.2957}  (see also \cite[Sect.\,3.1]{khovhompaper}) can be explicitly computed by splitting an idempotent. In particular, $Y\otimes X$ is isomorphic to a finite-rank matrix factorisation over $R_3 \otimes_\C R_1$. 

The unit $I_W$ on $(R=\C[x_1,\ldots,x_n], W)$ is a deformation of the Koszul complex of $(x_1-x'_1), \ldots,(x_n-x'_n)$ over $R\otimes_\C R \cong \C[x_1,\ldots,x_n,x'_1,\ldots,x'_n]$ referred to as the stabilised diagonal in \cite{d0904.4713}. In the case $n=1$ the unit $I_W$ is simply the matrix factorisation
$$
\left( 
\C[x,x'] \oplus \C[x,x'], 
\begin{pmatrix}
0 &  x-x' \\ \tfrac{W(x)-W(x')}{x-x'} & 0
\end{pmatrix}
\right)
. 
$$
More details in $I_W$ as well as its left and right actions can e.\,g.~be found in \cite[Sect.\,2\,\&\,4]{cm1208.1481} and \cite[Sect.\,2\,\&\,A.1]{cr0909.4381}. 

\medskip

In our applications we will mostly be interested in \textsl{graded Landau-Ginzburg models}, which are described by a bicategory $\LGgr$. Its objects are also of the form $(\C[x_1,\ldots,x_n], W)$ where polynomials form a graded ring by assigning degrees $|x_i| \in \Q_+$ to the variables, and the isolated singularity $W \in \C[x_1,\ldots,x_n]$ must be quasi-homogeneous of degree 2: 
$$
W\big( \lambda^{|x_1|} x_1, \ldots, \lambda^{|x_n|} x_n \big) 
= 
\lambda^{2} \, W( x_1, \ldots, x_n )
\quad \text{for all } \lambda \in \C^\times \, . 
$$
The \textsl{central charge} of $(\C[x_1,\ldots,x_n], W)$ is the number
\be\label{eq:centralcharge}
c(W) = 3 \sum_{i=1}^n (1 - |x_i|) \, . 
\ee

	A 1-morphisms from $(\C[x_1,\ldots,x_n],W)$ to $(\C[z_1,\ldots,z_m],V)$ in $\LGgr$ is a matrix factorisation 
$(X,d_X)$ with the conditions that 
	the modules $X^0 = \bigoplus_{g\in \Q} X^0_q$ and $X^1= \bigoplus_{g\in \Q} X^1_q$ are $\Q$-graded, 
	acting with a polynomial of some degree amounts to an endomorphism of~$X$ of the same degree, $d_X$ has $\Q$-degree~1 on~$X$, and if we write $\{-\}$ for the shift in $\Q$-degree and if $X^i \cong \bigoplus_{q\in\Q} \Bbbk[x,z]\{q\}^{\oplus a_{i,q}}$ for $i \in \{0,1\}$, then $\{ q\in\Q \,|\, a_{i,q} \neq 0 \}$ must be a subset of $i + G_{V-W}$, 
	where 
	$
	G_{V-W} := \big\langle |x_1|, \dots, |x_n|, |z_1|, \dots, |z_m| \big\rangle \subset \Q 
	$ 
	and $G_0 := \Z$.  
Such graded matrix factorisations are the objects of triangulated categories $\hmfgr(R,W)$ whose morphisms by definition are maps in $\hmf(R,W)$ which have $\Q$-degree zero. We write $[-]$ for the $\Z_2$-grading 
	shift. 

We have 
$$
\LGgr \big( (R,W), (S,V) \big) = \hmfgr( S\otimes_\C R, V-W)^\omega \, , 
$$
and also the remainder of the construction of $\LGgr$ parallels that of $\LG$. 

\bigskip

The following is the main result of \cite{cm1208.1481} (see also \cite{bfk1105.3177v3} and \cite[Sect.\,2.2]{CRCR}). Crucially, quantum dimensions in $\LGgr$ are explicitly and easily computable (which is often not the case in other bicategories): 

\begin{theorem}
$\LGgr$ has adjoints. For a matrix factorisation $(X,d_X) : (\C[x_1,\ldots,x_n],W) \rightarrow (\C[z_1,\ldots,z_m],V)$ in $\LGgr$ we have 
$$
{}^\dagger X = X^\vee[m]\{\tfrac{1}{3}\, c(V)\} \, , \quad 
X^\dagger = X^\vee[n]\{\tfrac{1}{3}\, c(W)\} \, , 
$$
and for ambidextrous~$X$ (i.\,e.~iff $m=n \mod 2$ and $c(V) = c(W)$) we have
\begin{align}
\dim_{\textrm{l}}(X) 
& = 
(-1)^{\binom{n+1}{2}}\Res \left[ \frac{ \str\big( \partial_{x_1} d_{X}\ldots \partial_{x_n} d_{X} \,  \partial_{z_1} d_{X}\ldots \partial_{z_m} d_{X}\big) \operatorname{d}\! z}{\partial_{z_1} V, \ldots, \partial_{z_m} V} \right] , 
\nonumber
\\
\dim_{\textrm{r}}(X) 
& =
(-1)^{\binom{m+1}{2}}\Res \left[ \frac{ \str\big( \partial_{x_1} d_{X}\ldots \partial_{x_n} d_{X} \,  \partial_{z_1} d_{X}\ldots \partial_{z_m} d_{X}\big) \operatorname{d}\! x}{\partial_{x_1} W, \ldots, \partial_{x_n} W} \right] .
\nonumber
\end{align}
\end{theorem}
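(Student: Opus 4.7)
\medskip

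\textbf{Proof plan.} The plan is to construct left and right adjoints explicitly at the level of matrix factorisations and then compute the two quantum dimensions by composing the resulting unit and counit maps. Throughout I write $R=\C[x_1,\ldots,x_n]$ and $S=\C[z_1,\ldots,z_m]$, view $(X,d_X)$ as a finite-rank matrix factorisation of $V-W$ over $S\otimes_\C R$, and set $X^\vee=\Hom_{S\otimes R}(X,S\otimes R)$, which carries a natural twisted differential making it into a matrix factorisation of $W-V$ over $R\otimes_\C S$. The idempotent completeness of $\hmfgr$ (already used implicitly in defining horizontal composition) ensures that any direct summand of a matrix factorisation obtained from an explicit projector represents a genuine 1-morphism in $\LGgr$.

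First I would construct the right adjoint $X^\dagger=X^\vee[n]\{c(W)/3\}$ by writing down candidate adjunction morphisms. For $\tev_X\colon X\otimes_R X^\dagger\to I_V$, the natural pairing $X\otimes_R X^\vee\to S\otimes_\C S$ composed with a map to $I_V$ coming from a Koszul-type resolution of the diagonal in $V-V$ provides the candidate; for $\tcoev_X\colon I_W\to X^\dagger\otimes_S X$, the ``Atiyah-class'' style map associated with the twisted differential, combined with the Koszul presentation of $I_W$, gives the candidate. Analogous formulas with $R$ and $S$ (and $m,n$) swapped, shifted by $\{c(V)/3\}$ and $[m]$, produce $\dX=X^\vee[m]\{c(V)/3\}$. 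The asymmetry between left and right adjoints comes precisely from the shift $\{c(V)/3-c(W)/3\}$ together with a parity shift by $[m-n]$, which is what controls the ambidexterity criterion $m\equiv n\pmod 2$ and $c(V)=c(W)$.

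The verification of the Zorro moves (\ref{eq:Zorro}) is the technical heart of the argument. One expresses both sides as explicit endomorphisms of $X$ in $\hmfgr(S\otimes R,V-W)$, noting that all constructions involve only polynomial data and Clifford-like generators dual to the variables $x_i,z_j$. The composition reduces to an expression involving derivatives of $d_X$ together with supertraces, and one shows it equals $1_X$ by a Koszul-type argument: the relevant idempotent is the image of a contraction with the Koszul differential, and contracting with $d_X^2=(V-W)\cdot 1_X$ together with splitting an idempotent yields the identity. The quasi-homogeneity of $W$ and $V$ is used here to ensure the grading shifts come out exactly as $c(W)/3$ and $c(V)/3$; this is essentially the Euler identity $W=\tfrac{1}{2}\sum|x_i|x_i\partial_{x_i}W$ producing $n-\tfrac{1}{3}c(W)=\tfrac{2}{3}\sum|x_i|$.

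Having constructed the adjoints, the quantum dimensions are obtained by composing them as prescribed in Definition~\ref{def:qdims}, with the isomorphism $\alpha_X\colon\dX\to X^\dagger$ being the evident grading-shift identification available in the ambidextrous case. The composition $\tev_X\circ(1_X\otimes\alpha_X)\circ\coev_X$ becomes an endomorphism of $I_V$; since $\End_{\hmfgr}(I_V)$ is known to be the degree-zero part of the Jacobi algebra $\Jac(V)=\C[z]/(\partial_{z_1}V,\ldots,\partial_{z_m}V)$, the computation produces a polynomial in $\Jac(V)$, and tracking the explicit unit/counit yields exactly $\str(\partial_{x_1}d_X\cdots\partial_{x_n}d_X\,\partial_{z_1}d_X\cdots\partial_{z_m}d_X)$ up to a Koszul sign $(-1)^{\binom{n+1}{2}}$. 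The residue interpretation follows from identifying this class with the corresponding element of $\Jac(V)$ via the Grothendieck residue pairing; the formula for $\dim_{\mathrm{r}}$ is obtained by the symmetric construction. The principal obstacle is the explicit verification of the Zorro moves and the careful management of Koszul signs and grading shifts across the many identifications; once those are in place, the residue formulas drop out as a direct computation.
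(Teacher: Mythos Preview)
The paper does not prove this theorem at all: it is stated as ``the main result of \cite{cm1208.1481}'' (with references also to \cite{bfk1105.3177v3} and \cite[Sect.\,2.2]{CRCR}) and is simply quoted for later use. So there is no in-paper argument to compare against; the honest answer is that this is a cited result, and any proof you produce would necessarily go beyond what the present paper contains.

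That said, your outline is a faithful high-level summary of the strategy of \cite{cm1208.1481}: one writes down explicit (co)evaluation maps using the Koszul-type presentation of the units $I_W$, $I_V$ together with Atiyah-class style maps built from derivatives of $d_X$, verifies the Zorro moves by a direct homotopy computation, and then reads off the quantum dimensions as elements of the Jacobi algebras, which are naturally expressed as residues. The grading shifts by $c(W)/3$ and $c(V)/3$ and the parity shifts by $[n]$ and $[m]$ arise exactly as you say, from tracking $\Q$- and $\Z_2$-degrees through these constructions. Where your sketch is thin is precisely where the real work in \cite{cm1208.1481} lies: writing down the adjunction maps explicitly (they involve nontrivial combinatorics with the Koszul generators and the ``difference derivatives'' $\partial^{[i]}W$), and carrying out the Zorro-move verification, which is a substantial homotopy computation rather than a formal Koszul argument. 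Your description of that step (``contracting with $d_X^2=(V-W)\cdot 1_X$ together with splitting an idempotent yields the identity'') does not by itself pin down why the particular supertrace-of-derivatives expression appears, nor why the sign is $(-1)^{\binom{n+1}{2}}$; those come from the explicit form of the counit constructed in \cite{cm1208.1481}. If you want a self-contained proof you should import those explicit maps; otherwise, citing the reference as the paper does is the appropriate move.
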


\subsubsection{ADE type orbifold equivalences}\label{subsubsec:ADEorbi}

Simple singularities fall into the following ADE classification: 
\begin{align}\label{eq:simplesing}
W^{(\mathrm{A}_{d-1})} &= x^{d} + y^{2} + z^2 && (|x| = \tfrac{2}{d} \, , \;\; |y| = |z| = 1) &
\nonumber\\
W^{(\mathrm{D}_{d+1})} &=  x^{d} + x y^{2} + z^2 && (|x| = \tfrac{2}{d} \, , \;\; |y| = 1 - \tfrac{1}{d} \, , \;\; |z| = 1) &
\nonumber\\
W^{(\mathrm{E}_6)} &= x^{3} + y^{4} + z^2 && (|x| = \tfrac{2}{3} \, , \;\; |y| = \tfrac{1}{2} \, , \;\; |z| = 1) & 
\\
W^{(\mathrm{E}_7)} &= x^{3} + x y^{3} + z^2 && (|x| = \tfrac{2}{3} \, , \;\; |y| = \tfrac{4}{9} \, , \;\; |z| = 1) & 
\nonumber\\
W^{(\mathrm{E}_8)} &= x^{3} + y^{5} + z^2 && (|x| = \tfrac{2}{3} \, , \;\; |y| = \tfrac{2}{5} \, , \;\; |z| = 1) & 
\nonumber
\end{align}
As elements of $R = \C[x,y,z]$ we consider these polynomials as objects in $\LGgr$. 

The main result of \cite{CRCR} is that two simple singularities are orbifold equivalent if and only if they have the same central charge~\eqref{eq:centralcharge}. To state it explicitly, we recall from \cite{br0707.0922} that there are so-called (graded) \textsl{permutation matrix factorisations} $P_S$ of $u'^d - u^d$ for every subset $S \subset \Z_d$: these are rank-1 factorisations on $\C[u,u'] \oplus \C[u,u']$ with 
$$
d_{P_S} = 
\begin{pmatrix}
0 & \prod_{l \in S} (u' - \zeta_d^l u) 
\\
\prod_{l \in S^{\textrm{c}}} (u' - \zeta_d^l u) & 0
\end{pmatrix}
$$
where $\zeta_d = \E^{2 \pi \I/d}$ and $S^{\textrm{c}}$ denotes the complement of~$S$ in $\Z_d$. Permutation matrix factorisations are well-understood, see e.\,g.~\cite[Sect.\,3.3]{cr1006.5609} and \cite[Sect.\,3.2]{DRCR}. Note in particular that $P_{\{0\}} = I_{u^d}$, and that $P_{\{d/2\}}[1] \cong P_{\{0,1,\ldots,d-1\} \backslash \{d/2\}}$ has twisted differential 
$$
\begin{pmatrix}
0 &  u'+u \\ \tfrac{u'^d - u^d}{u'-u} & 0
\end{pmatrix}
.
$$
Hence tensoring with $P_{\{d/2\}}[1]$ simply acts as $u \mapsto -u'$. 

\begin{theorem}[\cite{CRCR}]\label{thm:ADEOE}
In $\LGgr$ there are orbifold equivalences
\begin{align}\label{eq:ADEOE}
& W^{(\mathrm{A}_{2d-1})} \sim W^{(\mathrm{D}_{d+1})}
\, , \quad 
\nonumber
\\
& W^{(\mathrm{A}_{11})} \sim W^{(\mathrm{E}_6)} 
\, , \quad
W^{(\mathrm{A}_{17})} \sim W^{(\mathrm{E}_7)} 
\, , \quad
W^{(\mathrm{A}_{29})} \sim W^{(\mathrm{E}_8)} 
\, .
\end{align}
These equivalences are presented in \cite[Sect.\,2.3]{CRCR} by explicitly known 1-morphisms~$X$ (with source $W^{(\mathrm{A}_{2d-1})}$) which are rank-2 matrix factorisations in all cases but for $W^{(\mathrm{A}_{29})} \sim W^{(\mathrm{E}_8)}$ where~$X$ is rank-4. 
\end{theorem}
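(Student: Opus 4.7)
The plan is to establish each orbifold equivalence by exhibiting an explicit ambidextrous 1-morphism whose quantum dimensions are both non-zero, and thus invertible in $\End(I) \cong \C$.

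\textbf{Step 1: Central charges match.} First I would verify from \eqref{eq:centralcharge} and the grading data in \eqref{eq:simplesing} that for each of the four pairs $(W, V)$ listed in \eqref{eq:ADEOE} one has $c(W) = c(V)$. For $A_{2d-1}$ this gives $c = 3 - 6/(2d)$, and a direct computation shows this agrees with $c(D_{d+1})$, and likewise one reads off $c(A_{11}) = c(E_6) = 5/2$, $c(A_{17}) = c(E_7) = 8/3$, and $c(A_{29}) = c(E_8) = 14/5$. This ensures that any candidate 1-morphism $X \colon W^{(A_{2d-1})} \to W^{(\Gamma)}$ built from three variables on each side can be made $\Q$-graded of degree~0 and will satisfy the symmetry condition $m \equiv n \pmod 2$ of the quantum dimension formula.

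\textbf{Step 2: Construction of the bimodules.} For each pair I would write down an explicit matrix factorisation $X \in \hmfgr(\C[x,y,z] \otimes_\C \C[u,v,w], V-W)^\omega$ over the tensor product ring. For $(A_{2d-1},D_{d+1})$ a rank-2 factorisation suffices and can be constructed by splitting $V-W$ using the Cardano-like identities between $x^{2d}+y^2$ and $u^d + uv^2$; for $(A_{11}, E_6)$, $(A_{17},E_7)$ one again finds rank-2 factorisations, while for $(A_{29}, E_8)$ one needs a rank-4 factorisation. The existence of these specific $X$ is a genuine input: one writes down a block matrix $d_X$ with polynomial entries and checks $d_X^2 = (V-W)\cdot 1_X$ by hand. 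The $\Q$-grading is then fixed so that $d_X$ has degree~1.

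\textbf{Step 3: Computing quantum dimensions.} Since the total number of variables is either $3+3=6$ or, after adding the $z$-variables which factor off trivially, one can first strip the common $z^2$ factor by absorbing $I_{z^2}$-type permutations (using $P_{\{1\}}$ of $z'^{\,2}-z^2$) to reduce to a two-variable problem; this simplification is used systematically in \cite{CRCR}. Then I would apply the residue formula of the theorem:
\begin{align*}
\dim_{\mathrm{l}}(X) &= (-1)^{\binom{n+1}{2}} \Res\!\left[ \frac{\str(\partial_x d_X \partial_y d_X \partial_z d_X \, \partial_u d_X \partial_v d_X \partial_w d_X)\,\D u \wedge \D v \wedge \D w}{\partial_u V,\, \partial_v V,\, \partial_w V} \right],
\end{align*}
and analogously for $\dim_{\mathrm{r}}(X)$. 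The supertrace expansion and residue manipulation is the computational heart and is the expected main obstacle, particularly in the $E_8$ case of rank~4.

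\textbf{Step 4: Invertibility and conclusion.} Each residue comes out to be an explicit non-zero rational number (these values are tabulated in \cite[Sect.\,2.3]{CRCR}), and since $\End(I_W) \cong \C$ for the objects in question, non-zero means invertible. By the definition of orbifold equivalence this yields the four equivalences in \eqref{eq:ADEOE}. The main obstacle is not conceptual but computational: finding the explicit $d_X$ (especially in the exceptional cases, and most of all the rank-4 factorisation for $E_8$) and evaluating the multivariate residues. Everything else is essentially bookkeeping against the framework of Section~\ref{subsec:ECtheory}.
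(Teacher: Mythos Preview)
Your outline is correct and matches the approach of the cited reference \cite{CRCR}; the present paper does not give its own proof of this theorem but simply imports it as a result from \cite{CRCR}, where the argument proceeds exactly along your four steps: matching central charges, writing down explicit rank-2 (resp.\ rank-4 for $E_8$) graded matrix factorisations, evaluating the residue formula for $\diml(X)$ and $\dimr(X)$, and observing the resulting complex numbers are nonzero. One small refinement: in \cite{CRCR} the squares $z^2$ and $z'^2$ are removed via Kn\"orrer periodicity rather than by tensoring with a permutation factorisation, so the explicit $d_X$ are actually written as matrices over $\C[x,y,u,v]$ in four variables; this is what the paper alludes to in the parenthetical remark just before Proposition~\ref{prop:Xaction}.
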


Moreover, the separable Frobenius algebras $X^\dagger \otimes X$ are isomorphic to certain direct sums of permutation matrix factorisations, which in particular leads to the equivalences\footnote{Here we are using \eqref{eq:con-equi} with $\B = \LGgr$, $a = (\C[x,y,z], W^{(\mathrm{A}_{2d-1})})$, $c = (\C,0)$, and choices for~$b$ and~$d$ informed by~\eqref{eq:ADEOE}.}
\begin{align}
\hmfgr\! \big( R, W^{(\mathrm{E}_6)} \big) 
& \cong 
\modu\! \big( P_{\{0\}} \oplus P_{\{-3,-2,\ldots,3\}} \big) \, , 
\nonumber
\\
\hmfgr\! \big( R, W^{(\mathrm{E}_7)} \big) 
& \cong 
\modu\! \big( P_{\{0\}} \oplus P_{\{-4,-3,\ldots,4\}} \oplus P_{\{-8,-7,\ldots,8\}} \big) \, , 
\label{eq:Psums}
\\
\hmfgr\! \big( R, W^{(\mathrm{E}_8)} \big) 
& \cong 
\modu\! \big( P_{\{0\}} \oplus P_{\{-5,-4,\ldots,5\}} \oplus P_{\{-9,-8,\ldots,9\}} \oplus P_{\{-14,-13,\ldots,14\}} \big) 
\nonumber
\end{align}
where $R= \C[x,y,z]$, as wells as (conjecturally for all~$d$, but it has only been checked for $d \in \{ 2,3,\ldots, 10\}$)
$$
\hmfgr\! \big( R, W^{(\mathrm{D}_{d+1})} \big) 
\cong 
\modu\! \big( P_{\{0\}} \oplus P_{\{ d/2 \}}[1] \big) \, .
$$

For example, in the case $W^{(\mathrm{A}_{11})} \sim W^{(\mathrm{E}_6)}$ the rank-2 matrix factorisation~$X$ of $x'^3 + y'^4 + z'^2 - x^{12} - y^2 - z^2$ is written out in \cite[(2.26)\,\&\,(2.27)]{CRCR} (in terms of only four variables, where one disposes of the squares $z^2,z'^2$ using the equivalence called Kn\"orrer periodicity \cite{knoe1987}), and gives rise to the functor
$$
- \otimes X : 
\hmfgr\! \big( \C[x',y',z'], W^{(\mathrm{E}_6)} \big) 
\lra
\hmfgr\! \big( \C[x,y,z], W^{(\mathrm{A}_{11})} \big) 
\, .
$$
Computing $X^\dagger \otimes X$ using the methods of \cite{dm1102.2957} implemented in \cite{khovhompaper} one obtains a matrix factorisation of $x'^{12} - y'^2 - z'^2 - x^{12} - y^2 - z^2$ which under Kn\"orrer periodicity corresponds to the matrix factorisation $A = P_{\{0\}} \oplus P_{\{-3,-2,\ldots,3\}}$ of $x'^{12} - x^{12}$. The algebra~$A$ gives rise to the endofunctor $A\otimes-$ on $\hmfgr(\C[x], x^{12})$, and its module category is equivalent to $\hmfgr ( \C[x,y,z], W^{(\mathrm{E}_6)})$. 

\medskip

The action of the matrix factorisations~$X$ exhibiting the orbifold equivalences of Theorem~\ref{thm:ADEOE} can be computed explicitly. To state the results, we recall that for a simple singularity $W^{(\Gamma)}$ of ADE type~$\Gamma$, every object in the category $\hmfgr(\C[x,y,z], W^{(\Gamma)})$ is isomorphic to a direct sum of (shifts of) simple objects 
$$
T^{(\Gamma)}_j \quad \text{with } j \in \{ 1,2, \ldots, |\Gamma| \} 
$$
which are listed in \cite[Sect.\,5]{kst0511155}. In fact $T^{(\Gamma)}_j$ precisely corresponds to the $j$-th vertex of the Dynkin diagram of type~$\Gamma$ with the vertices labelled as in Figure~\ref{fig:DynkinDiagrams}. 

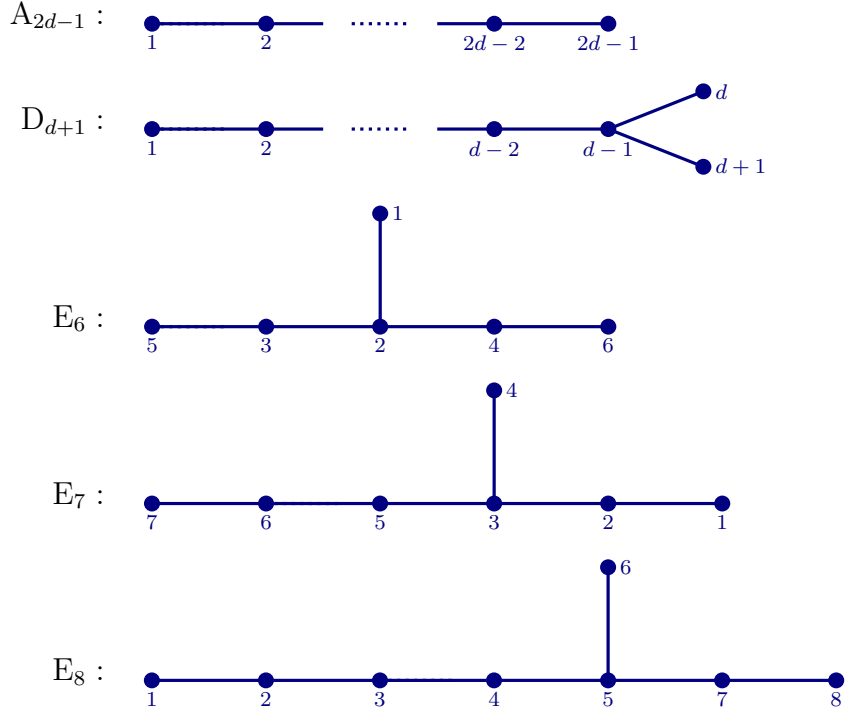
\begin{figure}[t]
\begin{align*}
\mathrm{A}_{2d-1}: \quad
&
\begin{tikzpicture}[very thick,scale=1.0,color=blue!50!black, baseline=0]
\draw[dotted] (0,0) -- (1,0);
\fill[] (0,0) circle (3.0pt) node[below] { {\scriptsize $1$} };
\fill[] (1.5,0) circle (3.0pt) node[below] { {\scriptsize $2$} };
\fill[] (4.5,0) circle (3.0pt) node[below] { {\scriptsize $2d-2$} };
\fill[] (6,0) circle (3.0pt) node[below] { {\scriptsize $2d-1$} };
\draw[-] (0,0) -- (1.5,0) -- (2.25,0); 
\draw[dotted] (2.625,0) -- (3.375,0);
\draw[-] (3.75,0) -- (4.5,0) -- (6,0); 
\end{tikzpicture}
\\
\mathrm{D}_{d+1}: \quad
&
\begin{tikzpicture}[very thick,scale=1.0,color=blue!50!black, baseline=0]
\draw[dotted] (0,0) -- (1,0);
\fill[] (0,0) circle (3.0pt) node[below] { {\scriptsize $1$} };
\fill[] (1.5,0) circle (3.0pt) node[below] { {\scriptsize $2$} };
\fill[] (4.5,0) circle (3.0pt) node[below] { {\scriptsize $d-2$} };
\fill[] (6,0) circle (3.0pt) node[below] { {\scriptsize $d-1$} };
\fill[] (7.25,0.5) circle (3.0pt) node[right] { {\scriptsize $d$} };
\fill[] (7.25,-0.5) circle (3.0pt) node[right] { {\scriptsize $d+1$} };
\draw[-] (0,0) -- (1.5,0) -- (2.25,0); 
\draw[dotted] (2.625,0) -- (3.375,0);
\draw[-] (3.75,0) -- (4.5,0) -- (6,0); 
\draw[-] (6,0) -- (7.25,-0.5); 
\draw[-] (6,0) -- (7.25,0.5); 
\end{tikzpicture}
\\
\mathrm{E}_{6}: \quad
&
\begin{tikzpicture}[very thick,scale=1.0,color=blue!50!black, baseline=0]
\draw[dotted] (0,0) -- (1,0);
\fill[] (0,0) circle (3.0pt) node[below] { {\scriptsize $5$} };
\fill[] (1.5,0) circle (3.0pt) node[below] { {\scriptsize $3$} };
\fill[] (3,0) circle (3.0pt) node[below] { {\scriptsize $2$} };
\fill[] (4.5,0) circle (3.0pt) node[below] { {\scriptsize $4$} };
\fill[] (6,0) circle (3.0pt) node[below] { {\scriptsize $6$} };
\fill[] (3,1.5) circle (3.0pt) node[right] { {\scriptsize $1$} };
\draw[-] (0,0) -- (6,0); 
\draw[-] (3,0) -- (3,1.5); 
\end{tikzpicture}
\\
\mathrm{E}_{7}: \quad
&
\begin{tikzpicture}[very thick,scale=1.0,color=blue!50!black, baseline=0]
\draw[dotted] (0,0) -- (1,0);
\fill[] (-1.5,0) circle (3.0pt) node[below] { {\scriptsize $7$} };
\fill[] (0,0) circle (3.0pt) node[below] { {\scriptsize $6$} };
\fill[] (1.5,0) circle (3.0pt) node[below] { {\scriptsize $5$} };
\fill[] (3,0) circle (3.0pt) node[below] { {\scriptsize $3$} };
\fill[] (4.5,0) circle (3.0pt) node[below] { {\scriptsize $2$} };
\fill[] (6,0) circle (3.0pt) node[below] { {\scriptsize $1$} };
\fill[] (3,1.5) circle (3.0pt) node[right] { {\scriptsize $4$} };
\draw[-] (-1.5,0) -- (6,0); 
\draw[-] (3,0) -- (3,1.5); 
\end{tikzpicture}
\\
\mathrm{E}_{8}: \quad
&
\begin{tikzpicture}[very thick,scale=1.0,color=blue!50!black, baseline=0]
\draw[dotted] (0,0) -- (1,0);
\fill[] (-3,0) circle (3.0pt) node[below] { {\scriptsize $1$} };
\fill[] (-1.5,0) circle (3.0pt) node[below] { {\scriptsize $2$} };
\fill[] (0,0) circle (3.0pt) node[below] { {\scriptsize $3$} };
\fill[] (1.5,0) circle (3.0pt) node[below] { {\scriptsize $4$} };
\fill[] (3,0) circle (3.0pt) node[below] { {\scriptsize $5$} };
\fill[] (4.5,0) circle (3.0pt) node[below] { {\scriptsize $7$} };
\fill[] (6,0) circle (3.0pt) node[below] { {\scriptsize $8$} };
\fill[] (3,1.5) circle (3.0pt) node[right] { {\scriptsize $6$} };
\draw[-] (-3,0) -- (6,0); 
\draw[-] (3,0) -- (3,1.5); 
\end{tikzpicture}
\end{align*}
\caption{ADE Dynkin diagrams with vertex label convention of \cite{kst0511155}} 
\label{fig:DynkinDiagrams} 
\end{figure}

\begin{proposition}\label{prop:Xaction}
The functors $-\otimes X$ of Theorem~\ref{thm:ADEOE} act as follows (up to isomorphism and shifts) on the simples 
$
T^{(\Gamma)}_j 
$: 
\begin{enumerate}
\item 
as a functor $\hmfgr(\C[x,y,z], W^{(\mathrm{D}_{d+1})}) \rightarrow \hmfgr(\C[x,y,z], W^{(\mathrm{A}_{2d-1})})$: 
\begin{align*}
T^{(\mathrm{D}_{d+1})}_1 & \longmapsto T^{(\mathrm{A}_{2d-1})}_1 \oplus T^{(\mathrm{A}_{2d-1})}_{2d-1} , 
\\
T^{(\mathrm{D}_{d+1})}_j & \longmapsto T^{(\mathrm{A}_{2d-1})}_j \oplus T^{(\mathrm{A}_{2d-1})}_{2d-j} 
\quad\text{for } j \in \{ 2,4,\ldots, d-2 \}  , 
\\
T^{(\mathrm{D}_{d+1})}_{j+1} & \longmapsto T^{(\mathrm{A}_{2d-1})}_{j-1} \oplus T^{(\mathrm{A}_{2d-1})}_{2d-j+1} 
\quad\text{for } j \in \{ 2,4,\ldots, d-2 \}  ,
\\
T^{(\mathrm{D}_{d+1})}_d & \longmapsto T^{(\mathrm{A}_{2d-1})}_d  , 
\\
T^{(\mathrm{D}_{d+1})}_{d+1} & \longmapsto T^{(\mathrm{A}_{2d-1})}_d  , 
\end{align*}
for at least $d \in \{ 2,3, \ldots , 42 \}$, 
\item 
as a functor $\hmfgr(\C[x,y,z], W^{(\mathrm{E}_{6})}) \rightarrow \hmfgr(\C[x,y,z], W^{(\mathrm{A}_{11})})$: 
\begin{align*}
T^{(\mathrm{E}_{6})}_1 & \longmapsto T^{(\mathrm{A}_{11})}_{4} \oplus T^{(\mathrm{A}_{11})}_{8} , 
\\
T^{(\mathrm{E}_{6})}_2 & \longmapsto T^{(\mathrm{A}_{11})}_{3} \oplus T^{(\mathrm{A}_{11})}_{5} \oplus T^{(\mathrm{A}_{11})}_{7} \oplus T^{(\mathrm{A}_{11})}_{9} , 
\\
T^{(\mathrm{E}_{6})}_3 & \longmapsto T^{(\mathrm{A}_{11})}_{2} \oplus T^{(\mathrm{A}_{11})}_{6} \oplus T^{(\mathrm{A}_{11})}_{8} , 
\\
T^{(\mathrm{E}_{6})}_4 & \longmapsto T^{(\mathrm{A}_{11})}_{4} \oplus T^{(\mathrm{A}_{11})}_{6} \oplus T^{(\mathrm{A}_{11})}_{10} , 
\\
T^{(\mathrm{E}_{6})}_5 & \longmapsto T^{(\mathrm{A}_{11})}_{1} \oplus T^{(\mathrm{A}_{11})}_{7} , 
\\
T^{(\mathrm{E}_{6})}_6 & \longmapsto T^{(\mathrm{A}_{11})}_{5} \oplus T^{(\mathrm{A}_{11})}_{11} , 
\end{align*}
\item 
as a functor $\hmfgr(\C[x,y,z], W^{(\mathrm{E}_{7})}) \rightarrow \hmfgr(\C[x,y,z], W^{(\mathrm{A}_{17})})$: 
\begin{align*}
T^{(\mathrm{E}_{7})}_1 & \longmapsto T^{(\mathrm{A}_{17})}_{6} \oplus T^{(\mathrm{A}_{17})}_{12} , 
\\
T^{(\mathrm{E}_{7})}_2 & \longmapsto T^{(\mathrm{A}_{17})}_{5} \oplus T^{(\mathrm{A}_{17})}_{7} \oplus T^{(\mathrm{A}_{17})}_{11} \oplus T^{(\mathrm{A}_{17})}_{13} , 
\\
T^{(\mathrm{E}_{7})}_3 & \longmapsto T^{(\mathrm{A}_{17})}_{4} \oplus T^{(\mathrm{A}_{17})}_{6} \oplus T^{(\mathrm{A}_{17})}_{8} \oplus T^{(\mathrm{A}_{17})}_{10} \oplus T^{(\mathrm{A}_{17})}_{12} \oplus T^{(\mathrm{A}_{17})}_{14} , 
\\
T^{(\mathrm{E}_{7})}_4 & \longmapsto T^{(\mathrm{A}_{17})}_{5} \oplus T^{(\mathrm{A}_{17})}_{9} \oplus T^{(\mathrm{A}_{17})}_{13} , 
\\
T^{(\mathrm{E}_{7})}_5 & \longmapsto T^{(\mathrm{A}_{17})}_{3} \oplus T^{(\mathrm{A}_{17})}_{7} \oplus T^{(\mathrm{A}_{17})}_{9} \oplus T^{(\mathrm{A}_{17})}_{11} \oplus T^{(\mathrm{A}_{17})}_{15} , 
\\
T^{(\mathrm{E}_{7})}_6 & \longmapsto T^{(\mathrm{A}_{17})}_{2} \oplus T^{(\mathrm{A}_{17})}_{8} \oplus T^{(\mathrm{A}_{17})}_{10} \oplus T^{(\mathrm{A}_{17})}_{16} , 
\\
T^{(\mathrm{E}_{7})}_7 & \longmapsto T^{(\mathrm{A}_{17})}_{1} \oplus T^{(\mathrm{A}_{17})}_{9} \oplus T^{(\mathrm{A}_{17})}_{17} . 
\end{align*}
\end{enumerate}
\end{proposition}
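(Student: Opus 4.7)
The plan is a case-by-case explicit computation. Concrete rank-one or rank-two matrix-factorisation presentations of the indecomposables $T^{(\Gamma)}_j$ are collected in \cite[Sect.\,5]{kst0511155}, and the 1-morphisms $X$ of Theorem~\ref{thm:ADEOE} are spelled out in \cite[Sect.\,2.3]{CRCR}. For each pair $(\Gamma,j)$ appearing in the statement I would form the tensor product $T^{(\Gamma)}_j \otimes_S X$ over the intermediate polynomial ring $S = \C[x,y,z]$ and reduce the result to a finite-rank matrix factorisation of $W^{(\mathrm{A}_n)}$ by splitting the canonical idempotent as in \cite{dm1102.2957}. Kn\"orrer periodicity eliminates the quadratic auxiliary variables $y,z$ from the outset, so the real manipulations take place over $\C[x,x']$, and the computer-algebra setup of \cite{khovhompaper} makes them mechanical.

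Once a reduced representative is in hand, I would identify its indecomposable summands by matching with the classification of \cite[Sect.\,5]{kst0511155}; the Krull--Schmidt property of $\hmfgr$ ensures that the decomposition is unique up to the order of summands and the shifts $[1]$ and $\{n\}$ that the statement already suppresses. Several sanity checks are available to catch arithmetic errors: the total $\C[x,x']$-rank of the reduced output must equal the sum of the ranks of the claimed indecomposable summands, the $\Q$-degrees and $\Z_2$-parities must be consistent with the allowed shifts, and the left and right quantum dimensions of input and output must be compatible via the multiplicativity of quantum dimension under horizontal composition and the residue formula of Section~\ref{subsec:appLG}.

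The main obstacle is computational bulk rather than conceptual difficulty. In parts (ii) and (iii) there are only six or seven tensor products, $X$ has rank~$2$, and each $T^{(\mathrm{E}_r)}_j$ has rank at most~$2$, so every single case is individually tractable. In part (i) the matrix factorisation $X$ depends on $d$ and the calculation must be redone for each value; the restriction $d \in \{2,\ldots,42\}$ simply reflects the range in which the explicit computation has actually been executed, rather than any intrinsic obstacle. A more structural approach would exploit the adjunction
$$
\Hom\bigl( T^{(\Gamma)}_j \otimes X,\, T^{(\mathrm{A}_n)}_k \bigr) \;\cong\; \Hom\bigl( T^{(\Gamma)}_j,\, T^{(\mathrm{A}_n)}_k \otimes X^\dagger \bigr)
$$
together with the decomposition of $X^\dagger \otimes X$ into permutation matrix factorisations recorded in~\eqref{eq:Psums}, reducing everything to computations on the A-side; but pinning down the exact multiplicities appearing in the statement still ultimately requires the explicit tensor products above.
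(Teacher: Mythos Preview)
Your proposal is correct and takes essentially the same approach as the paper: the paper's proof is the single sentence ``Direct computation using the methods implemented in Singular in \cite{khovhompaper}'', and your write-up simply unpacks what that computation consists of. One small inaccuracy: not all $T^{(\mathrm{E}_r)}_j$ have rank at most~$2$ in the presentations of \cite[Sect.\,5]{kst0511155} (for instance the ``middle'' vertices of $\mathrm{E}_6,\mathrm{E}_7$ have rank~$3$ or higher), but this does not affect the validity of the method, only the estimated size of the intermediate matrices.
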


\begin{proof}
Direct computation using the methods implemented in Singular in \cite{khovhompaper}. 
\end{proof}

The action of the 1-morphism~$X$ exhibiting the orbifold equivalence $W^{(\mathrm{A}_{29})} \sim W^{(\mathrm{E}_8)} $ is missing in Proposition~\ref{prop:Xaction}. Computing the matrix factorisations $T^{(\mathrm{E}_{8})}_j \otimes X$ using the methods of \cite{khovhompaper} is easy and fast in this case too,  but finding explicit isomorphisms to direct sums of A-type simples is left as an exercise to the enthusiastically devoted reader. 

One can also compute the action of the Frobenius algebras $A = X^\dagger \otimes X$: 

\begin{proposition}\label{prop:Aaction}
Let~$X$ exhibit one of the orbifold equivalences $W^{(\mathrm{A}_{11})} \sim W^{(\mathrm{E}_6)}$, $W^{(\mathrm{A}_{17})} \sim W^{(\mathrm{E}_7)}$ or $W^{(\mathrm{A}_{29})} \sim W^{(\mathrm{E}_8)}$, and set $A = X^\dagger \otimes X$. Then the functors $A\otimes -$ act as follows (up to isomorphism and shift): 
\begin{align*}
T^{(\mathrm{A}_{11})}_j & \lmt \sum_{i=1}^{11} M_{ij}^{(\mathrm{A}_{11})} \, T^{(\mathrm{A}_{11})}_i
\quad\text{on } \hmfgr\!\big(\C[x,y,z], W^{(\mathrm{A}_{11})}\big) \, , 
\\
T^{(\mathrm{A}_{17})}_j & \lmt \sum_{i=1}^{17} M_{ij}^{(\mathrm{A}_{17})} \, T^{(\mathrm{A}_{17})}_i
\quad\text{on } \hmfgr\!\big(\C[x,y,z], W^{(\mathrm{A}_{17})}\big) \, , 
\\
T^{(\mathrm{A}_{29})}_j & \lmt \sum_{i=1}^{29} M_{ij}^{(\mathrm{A}_{29})} \, T^{(\mathrm{A}_{29})}_i
\quad\text{on } \hmfgr\!\big(\C[x,y,z], W^{(\mathrm{A}_{29})}\big) 
\end{align*}
where 
\begin{align*}
M^{(\mathrm{A}_{11})}
& = 
{\tiny 
\begin{pmatrix}
       1 &\!\! 0 &\!\! 0 &\!\! 0 &\!\! 0 &\!\! 0 &\!\! 1 &\!\! 0 &\!\! 0 &\!\! 0 &\!\! 0 \\
       0 &\!\! 1 &\!\! 0 &\!\! 0 &\!\! 0 &\!\! 1 &\!\! 0 &\!\! 1 &\!\! 0 &\!\! 0 &\!\! 0 \\
       0 &\!\! 0 &\!\! 1 &\!\! 0 &\!\! 1 &\!\! 0 &\!\! 1 &\!\! 0 &\!\! 1 &\!\! 0 &\!\! 0 \\
       0 &\!\! 0 &\!\! 0 &\!\! 2 &\!\! 0 &\!\! 1 &\!\! 0 &\!\! 1 &\!\! 0 &\!\! 1 &\!\! 0 \\
       0 &\!\! 0 &\!\! 1 &\!\! 0 &\!\! 2 &\!\! 0 &\!\! 1 &\!\! 0 &\!\! 1 &\!\! 0 &\!\! 1 \\
       0 &\!\! 1 &\!\! 0 &\!\! 1 &\!\! 0 &\!\! 2 &\!\! 0 &\!\! 1 &\!\! 0 &\!\! 1 &\!\! 0 \\
       1 &\!\! 0 &\!\! 1 &\!\! 0 &\!\! 1 &\!\! 0 &\!\! 2 &\!\! 0 &\!\! 1 &\!\! 0 &\!\! 0 \\
       0 &\!\! 1 &\!\! 0 &\!\! 1 &\!\! 0 &\!\! 1 &\!\! 0 &\!\! 2 &\!\! 0 &\!\! 0 &\!\! 0 \\
       0 &\!\! 0 &\!\! 1 &\!\! 0 &\!\! 1 &\!\! 0 &\!\! 1 &\!\! 0 &\!\! 1 &\!\! 0 &\!\! 0 \\
       0 &\!\! 0 &\!\! 0 &\!\! 1 &\!\! 0 &\!\! 1 &\!\! 0 &\!\! 0 &\!\! 0 &\!\! 1 &\!\! 0 \\
       0 &\!\! 0 &\!\! 0 &\!\! 0 &\!\! 1 &\!\! 0 &\!\! 0 &\!\! 0 &\!\! 0 &\!\! 0 &\!\! 1
\end{pmatrix}
} ,
\\
M^{(\mathrm{A}_{17})}
& = 
{\tiny 
\begin{pmatrix}
        1 &\!\! 0 &\!\! 0 &\!\! 0 &\!\! 0 &\!\! 0 &\!\! 0 &\!\! 0 &\!\! 1 &\!\! 0 &\!\! 0 &\!\! 0 &\!\! 0 &\!\! 0 &\!\!
          0 &\!\! 0 &\!\! 1  \\
        0 &\!\! 1 &\!\! 0 &\!\! 0 &\!\! 0 &\!\! 0 &\!\! 0 &\!\! 1 &\!\! 0 &\!\! 1 &\!\! 0 &\!\! 0 &\!\! 0 &\!\! 0 &\!\!
          0 &\!\! 1 &\!\! 0  \\
        0 &\!\! 0 &\!\! 1 &\!\! 0 &\!\! 0 &\!\! 0 &\!\! 1 &\!\! 0 &\!\! 1 &\!\! 0 &\!\! 1 &\!\! 0 &\!\! 0 &\!\! 0 &\!\!
          1 &\!\! 0 &\!\! 0  \\
        0 &\!\! 0 &\!\! 0 &\!\! 1 &\!\! 0 &\!\! 1 &\!\! 0 &\!\! 1 &\!\! 0 &\!\! 1 &\!\! 0 &\!\! 1 &\!\! 0 &\!\! 1 &\!\!
          0 &\!\! 0 &\!\! 0  \\
        0 &\!\! 0 &\!\! 0 &\!\! 0 &\!\! 2 &\!\! 0 &\!\! 1 &\!\! 0 &\!\! 1 &\!\! 0 &\!\! 1 &\!\! 0 &\!\! 2 &\!\! 0 &\!\!
          0 &\!\! 0 &\!\! 0  \\
        0 &\!\! 0 &\!\! 0 &\!\! 1 &\!\! 0 &\!\! 2 &\!\! 0 &\!\! 1 &\!\! 0 &\!\! 1 &\!\! 0 &\!\! 2 &\!\! 0 &\!\! 1 &\!\!
          0 &\!\! 0 &\!\! 0  \\
        0 &\!\! 0 &\!\! 1 &\!\! 0 &\!\! 1 &\!\! 0 &\!\! 2 &\!\! 0 &\!\! 1 &\!\! 0 &\!\! 2 &\!\! 0 &\!\! 1 &\!\! 0 &\!\!
          1 &\!\! 0 &\!\! 0  \\
        0 &\!\! 1 &\!\! 0 &\!\! 1 &\!\! 0 &\!\! 1 &\!\! 0 &\!\! 2 &\!\! 0 &\!\! 2 &\!\! 0 &\!\! 1 &\!\! 0 &\!\! 1 &\!\!
          0 &\!\! 1 &\!\! 0  \\
        1 &\!\! 0 &\!\! 1 &\!\! 0 &\!\! 1 &\!\! 0 &\!\! 1 &\!\! 0 &\!\! 3 &\!\! 0 &\!\! 1 &\!\! 0 &\!\! 1 &\!\! 0 &\!\!
          1 &\!\! 0 &\!\! 1  \\
        0 &\!\! 1 &\!\! 0 &\!\! 1 &\!\! 0 &\!\! 1 &\!\! 0 &\!\! 2 &\!\! 0 &\!\! 2 &\!\! 0 &\!\! 1 &\!\! 0 &\!\! 1 &\!\!
          0 &\!\! 1 &\!\! 0  \\
        0 &\!\! 0 &\!\! 1 &\!\! 0 &\!\! 1 &\!\! 0 &\!\! 2 &\!\! 0 &\!\! 1 &\!\! 0 &\!\! 2 &\!\! 0 &\!\! 1 &\!\! 0 &\!\!
          1 &\!\! 0 &\!\! 0  \\
        0 &\!\! 0 &\!\! 0 &\!\! 1 &\!\! 0 &\!\! 2 &\!\! 0 &\!\! 1 &\!\! 0 &\!\! 1 &\!\! 0 &\!\! 2 &\!\! 0 &\!\! 1 &\!\!
          0 &\!\! 0 &\!\! 0  \\
        0 &\!\! 0 &\!\! 0 &\!\! 0 &\!\! 2 &\!\! 0 &\!\! 1 &\!\! 0 &\!\! 1 &\!\! 0 &\!\! 1 &\!\! 0 &\!\! 2 &\!\! 0 &\!\!
          0 &\!\! 0 &\!\! 0  \\
        0 &\!\! 0 &\!\! 0 &\!\! 1 &\!\! 0 &\!\! 1 &\!\! 0 &\!\! 1 &\!\! 0 &\!\! 1 &\!\! 0 &\!\! 1 &\!\! 0 &\!\! 1 &\!\!
          0 &\!\! 0 &\!\! 0  \\
        0 &\!\! 0 &\!\! 1 &\!\! 0 &\!\! 0 &\!\! 0 &\!\! 1 &\!\! 0 &\!\! 1 &\!\! 0 &\!\! 1 &\!\! 0 &\!\! 0 &\!\! 0 &\!\!
          1 &\!\! 0 &\!\! 0  \\
        0 &\!\! 1 &\!\! 0 &\!\! 0 &\!\! 0 &\!\! 0 &\!\! 0 &\!\! 1 &\!\! 0 &\!\! 1 &\!\! 0 &\!\! 0 &\!\! 0 &\!\! 0 &\!\!
          0 &\!\! 1 &\!\! 0  \\
        1 &\!\! 0 &\!\! 0 &\!\! 0 &\!\! 0 &\!\! 0 &\!\! 0 &\!\! 0 &\!\! 1 &\!\! 0 &\!\! 0 &\!\! 0 &\!\! 0 &\!\! 0 &\!\!
          0 &\!\! 0 &\!\! 1
\end{pmatrix}
} ,
\\
M^{(\mathrm{A}_{29})}
& = 
{\tiny
\begin{pmatrix}
        1 &\!\! 0 &\!\! 0 &\!\! 0 &\!\! 0 &\!\! 0 &\!\! 0 &\!\! 0 &\!\! 0 &\!\! 0 &\!\! 1 &\!\! 0 &\!\! 0 &\!\! 0 &\!\!
          0 &\!\! 0 &\!\! 0 &\!\! 0 &\!\! 1 &\!\! 0 &\!\! 0 &\!\! 0 &\!\! 0 &\!\! 0 &\!\! 0 &\!\! 0 &\!\! 0 &\!\! 
         0 &\!\! 1  \\
        0 &\!\! 1 &\!\! 0 &\!\! 0 &\!\! 0 &\!\! 0 &\!\! 0 &\!\! 0 &\!\! 0 &\!\! 1 &\!\! 0 &\!\! 1 &\!\! 0 &\!\! 0 &\!\!
          0 &\!\! 0 &\!\! 0 &\!\! 1 &\!\! 0 &\!\! 1 &\!\! 0 &\!\! 0 &\!\! 0 &\!\! 0 &\!\! 0 &\!\! 0 &\!\! 0 &\!\! 
         1 &\!\! 0  \\
        0 &\!\! 0 &\!\! 1 &\!\! 0 &\!\! 0 &\!\! 0 &\!\! 0 &\!\! 0 &\!\! 1 &\!\! 0 &\!\! 1 &\!\! 0 &\!\! 1 &\!\! 0 &\!\!
          0 &\!\! 0 &\!\! 1 &\!\! 0 &\!\! 1 &\!\! 0 &\!\! 1 &\!\! 0 &\!\! 0 &\!\! 0 &\!\! 0 &\!\! 0 &\!\! 1 &\!\! 
         0 &\!\! 0  \\
        0 &\!\! 0 &\!\! 0 &\!\! 1 &\!\! 0 &\!\! 0 &\!\! 0 &\!\! 1 &\!\! 0 &\!\! 1 &\!\! 0 &\!\! 1 &\!\! 0 &\!\! 1 &\!\!
          0 &\!\! 1 &\!\! 0 &\!\! 1 &\!\! 0 &\!\! 1 &\!\! 0 &\!\! 1 &\!\! 0 &\!\! 0 &\!\! 0 &\!\! 1 &\!\! 0 &\!\! 
         0 &\!\! 0  \\
        0 &\!\! 0 &\!\! 0 &\!\! 0 &\!\! 1 &\!\! 0 &\!\! 1 &\!\! 0 &\!\! 1 &\!\! 0 &\!\! 1 &\!\! 0 &\!\! 1 &\!\! 0 &\!\!
          2 &\!\! 0 &\!\! 1 &\!\! 0 &\!\! 1 &\!\! 0 &\!\! 1 &\!\! 0 &\!\! 1 &\!\! 0 &\!\! 1 &\!\! 0 &\!\! 0 &\!\! 
         0 &\!\! 0  \\
        0 &\!\! 0 &\!\! 0 &\!\! 0 &\!\! 0 &\!\! 2 &\!\! 0 &\!\! 1 &\!\! 0 &\!\! 1 &\!\! 0 &\!\! 1 &\!\! 0 &\!\! 2 &\!\!
          0 &\!\! 2 &\!\! 0 &\!\! 1 &\!\! 0 &\!\! 1 &\!\! 0 &\!\! 1 &\!\! 0 &\!\! 2 &\!\! 0 &\!\! 0 &\!\! 0 &\!\! 
         0 &\!\! 0  \\
        0 &\!\! 0 &\!\! 0 &\!\! 0 &\!\! 1 &\!\! 0 &\!\! 2 &\!\! 0 &\!\! 1 &\!\! 0 &\!\! 1 &\!\! 0 &\!\! 2 &\!\! 0 &\!\!
          2 &\!\! 0 &\!\! 2 &\!\! 0 &\!\! 1 &\!\! 0 &\!\! 1 &\!\! 0 &\!\! 2 &\!\! 0 &\!\! 1 &\!\! 0 &\!\! 0 &\!\! 
         0 &\!\! 0  \\
        0 &\!\! 0 &\!\! 0 &\!\! 1 &\!\! 0 &\!\! 1 &\!\! 0 &\!\! 2 &\!\! 0 &\!\! 1 &\!\! 0 &\!\! 2 &\!\! 0 &\!\! 2 &\!\!
          0 &\!\! 2 &\!\! 0 &\!\! 2 &\!\! 0 &\!\! 1 &\!\! 0 &\!\! 2 &\!\! 0 &\!\! 1 &\!\! 0 &\!\! 1 &\!\! 0 &\!\! 
         0 &\!\! 0  \\
        0 &\!\! 0 &\!\! 1 &\!\! 0 &\!\! 1 &\!\! 0 &\!\! 1 &\!\! 0 &\!\! 2 &\!\! 0 &\!\! 2 &\!\! 0 &\!\! 2 &\!\! 0 &\!\!
          2 &\!\! 0 &\!\! 2 &\!\! 0 &\!\! 2 &\!\! 0 &\!\! 2 &\!\! 0 &\!\! 1 &\!\! 0 &\!\! 1 &\!\! 0 &\!\! 1 &\!\! 
         0 &\!\! 0  \\
        0 &\!\! 1 &\!\! 0 &\!\! 1 &\!\! 0 &\!\! 1 &\!\! 0 &\!\! 1 &\!\! 0 &\!\! 3 &\!\! 0 &\!\! 2 &\!\! 0 &\!\! 2 &\!\!
          0 &\!\! 2 &\!\! 0 &\!\! 2 &\!\! 0 &\!\! 3 &\!\! 0 &\!\! 1 &\!\! 0 &\!\! 1 &\!\! 0 &\!\! 1 &\!\! 0 &\!\! 
         1 &\!\! 0  \\
        1 &\!\! 0 &\!\! 1 &\!\! 0 &\!\! 1 &\!\! 0 &\!\! 1 &\!\! 0 &\!\! 2 &\!\! 0 &\!\! 3 &\!\! 0 &\!\! 2 &\!\! 0 &\!\!
          2 &\!\! 0 &\!\! 2 &\!\! 0 &\!\! 3 &\!\! 0 &\!\! 2 &\!\! 0 &\!\! 1 &\!\! 0 &\!\! 1 &\!\! 0 &\!\! 1 &\!\! 
         0 &\!\! 1  \\
        0 &\!\! 1 &\!\! 0 &\!\! 1 &\!\! 0 &\!\! 1 &\!\! 0 &\!\! 2 &\!\! 0 &\!\! 2 &\!\! 0 &\!\! 3 &\!\! 0 &\!\! 2 &\!\!
          0 &\!\! 2 &\!\! 0 &\!\! 3 &\!\! 0 &\!\! 2 &\!\! 0 &\!\! 2 &\!\! 0 &\!\! 1 &\!\! 0 &\!\! 1 &\!\! 0 &\!\! 
         1 &\!\! 0  \\
        0 &\!\! 0 &\!\! 1 &\!\! 0 &\!\! 1 &\!\! 0 &\!\! 2 &\!\! 0 &\!\! 2 &\!\! 0 &\!\! 2 &\!\! 0 &\!\! 3 &\!\! 0 &\!\!
          2 &\!\! 0 &\!\! 3 &\!\! 0 &\!\! 2 &\!\! 0 &\!\! 2 &\!\! 0 &\!\! 2 &\!\! 0 &\!\! 1 &\!\! 0 &\!\! 1 &\!\! 
         0 &\!\! 0  \\
        0 &\!\! 0 &\!\! 0 &\!\! 1 &\!\! 0 &\!\! 2 &\!\! 0 &\!\! 2 &\!\! 0 &\!\! 2 &\!\! 0 &\!\! 2 &\!\! 0 &\!\! 3 &\!\!
          0 &\!\! 3 &\!\! 0 &\!\! 2 &\!\! 0 &\!\! 2 &\!\! 0 &\!\! 2 &\!\! 0 &\!\! 2 &\!\! 0 &\!\! 1 &\!\! 0 &\!\! 
         0 &\!\! 0  \\
        0 &\!\! 0 &\!\! 0 &\!\! 0 &\!\! 2 &\!\! 0 &\!\! 2 &\!\! 0 &\!\! 2 &\!\! 0 &\!\! 2 &\!\! 0 &\!\! 2 &\!\! 0 &\!\!
          4 &\!\! 0 &\!\! 2 &\!\! 0 &\!\! 2 &\!\! 0 &\!\! 2 &\!\! 0 &\!\! 2 &\!\! 0 &\!\! 2 &\!\! 0 &\!\! 0 &\!\! 
         0 &\!\! 0  \\
        0 &\!\! 0 &\!\! 0 &\!\! 1 &\!\! 0 &\!\! 2 &\!\! 0 &\!\! 2 &\!\! 0 &\!\! 2 &\!\! 0 &\!\! 2 &\!\! 0 &\!\! 3 &\!\!
          0 &\!\! 3 &\!\! 0 &\!\! 2 &\!\! 0 &\!\! 2 &\!\! 0 &\!\! 2 &\!\! 0 &\!\! 2 &\!\! 0 &\!\! 1 &\!\! 0 &\!\! 
         0 &\!\! 0  \\
        0 &\!\! 0 &\!\! 1 &\!\! 0 &\!\! 1 &\!\! 0 &\!\! 2 &\!\! 0 &\!\! 2 &\!\! 0 &\!\! 2 &\!\! 0 &\!\! 3 &\!\! 0 &\!\!
          2 &\!\! 0 &\!\! 3 &\!\! 0 &\!\! 2 &\!\! 0 &\!\! 2 &\!\! 0 &\!\! 2 &\!\! 0 &\!\! 1 &\!\! 0 &\!\! 1 &\!\! 
         0 &\!\! 0  \\
        0 &\!\! 1 &\!\! 0 &\!\! 1 &\!\! 0 &\!\! 1 &\!\! 0 &\!\! 2 &\!\! 0 &\!\! 2 &\!\! 0 &\!\! 3 &\!\! 0 &\!\! 2 &\!\!
          0 &\!\! 2 &\!\! 0 &\!\! 3 &\!\! 0 &\!\! 2 &\!\! 0 &\!\! 2 &\!\! 0 &\!\! 1 &\!\! 0 &\!\! 1 &\!\! 0 &\!\! 
         1 &\!\! 0  \\
        1 &\!\! 0 &\!\! 1 &\!\! 0 &\!\! 1 &\!\! 0 &\!\! 1 &\!\! 0 &\!\! 2 &\!\! 0 &\!\! 3 &\!\! 0 &\!\! 2 &\!\! 0 &\!\!
          2 &\!\! 0 &\!\! 2 &\!\! 0 &\!\! 3 &\!\! 0 &\!\! 2 &\!\! 0 &\!\! 1 &\!\! 0 &\!\! 1 &\!\! 0 &\!\! 1 &\!\! 
         0 &\!\! 1  \\
        0 &\!\! 1 &\!\! 0 &\!\! 1 &\!\! 0 &\!\! 1 &\!\! 0 &\!\! 1 &\!\! 0 &\!\! 3 &\!\! 0 &\!\! 2 &\!\! 0 &\!\! 2 &\!\!
          0 &\!\! 2 &\!\! 0 &\!\! 2 &\!\! 0 &\!\! 3 &\!\! 0 &\!\! 1 &\!\! 0 &\!\! 1 &\!\! 0 &\!\! 1 &\!\! 0 &\!\! 
         1 &\!\! 0  \\
        0 &\!\! 0 &\!\! 1 &\!\! 0 &\!\! 1 &\!\! 0 &\!\! 1 &\!\! 0 &\!\! 2 &\!\! 0 &\!\! 2 &\!\! 0 &\!\! 2 &\!\! 0 &\!\!
          2 &\!\! 0 &\!\! 2 &\!\! 0 &\!\! 2 &\!\! 0 &\!\! 2 &\!\! 0 &\!\! 1 &\!\! 0 &\!\! 1 &\!\! 0 &\!\! 1 &\!\! 
         0 &\!\! 0  \\
        0 &\!\! 0 &\!\! 0 &\!\! 1 &\!\! 0 &\!\! 1 &\!\! 0 &\!\! 2 &\!\! 0 &\!\! 1 &\!\! 0 &\!\! 2 &\!\! 0 &\!\! 2 &\!\!
          0 &\!\! 2 &\!\! 0 &\!\! 2 &\!\! 0 &\!\! 1 &\!\! 0 &\!\! 2 &\!\! 0 &\!\! 1 &\!\! 0 &\!\! 1 &\!\! 0 &\!\! 
         0 &\!\! 0  \\
        0 &\!\! 0 &\!\! 0 &\!\! 0 &\!\! 1 &\!\! 0 &\!\! 2 &\!\! 0 &\!\! 1 &\!\! 0 &\!\! 1 &\!\! 0 &\!\! 2 &\!\! 0 &\!\!
          2 &\!\! 0 &\!\! 2 &\!\! 0 &\!\! 1 &\!\! 0 &\!\! 1 &\!\! 0 &\!\! 2 &\!\! 0 &\!\! 1 &\!\! 0 &\!\! 0 &\!\! 
         0 &\!\! 0  \\
        0 &\!\! 0 &\!\! 0 &\!\! 0 &\!\! 0 &\!\! 2 &\!\! 0 &\!\! 1 &\!\! 0 &\!\! 1 &\!\! 0 &\!\! 1 &\!\! 0 &\!\! 2 &\!\!
          0 &\!\! 2 &\!\! 0 &\!\! 1 &\!\! 0 &\!\! 1 &\!\! 0 &\!\! 1 &\!\! 0 &\!\! 2 &\!\! 0 &\!\! 0 &\!\! 0 &\!\! 
         0 &\!\! 0  \\
        0 &\!\! 0 &\!\! 0 &\!\! 0 &\!\! 1 &\!\! 0 &\!\! 1 &\!\! 0 &\!\! 1 &\!\! 0 &\!\! 1 &\!\! 0 &\!\! 1 &\!\! 0 &\!\!
          2 &\!\! 0 &\!\! 1 &\!\! 0 &\!\! 1 &\!\! 0 &\!\! 1 &\!\! 0 &\!\! 1 &\!\! 0 &\!\! 1 &\!\! 0 &\!\! 0 &\!\! 
         0 &\!\! 0  \\
        0 &\!\! 0 &\!\! 0 &\!\! 1 &\!\! 0 &\!\! 0 &\!\! 0 &\!\! 1 &\!\! 0 &\!\! 1 &\!\! 0 &\!\! 1 &\!\! 0 &\!\! 1 &\!\!
          0 &\!\! 1 &\!\! 0 &\!\! 1 &\!\! 0 &\!\! 1 &\!\! 0 &\!\! 1 &\!\! 0 &\!\! 0 &\!\! 0 &\!\! 1 &\!\! 0 &\!\! 
         0 &\!\! 0  \\
        0 &\!\! 0 &\!\! 1 &\!\! 0 &\!\! 0 &\!\! 0 &\!\! 0 &\!\! 0 &\!\! 1 &\!\! 0 &\!\! 1 &\!\! 0 &\!\! 1 &\!\! 0 &\!\!
          0 &\!\! 0 &\!\! 1 &\!\! 0 &\!\! 1 &\!\! 0 &\!\! 1 &\!\! 0 &\!\! 0 &\!\! 0 &\!\! 0 &\!\! 0 &\!\! 1 &\!\! 
         0 &\!\! 0  \\
        0 &\!\! 1 &\!\! 0 &\!\! 0 &\!\! 0 &\!\! 0 &\!\! 0 &\!\! 0 &\!\! 0 &\!\! 1 &\!\! 0 &\!\! 1 &\!\! 0 &\!\! 0 &\!\!
          0 &\!\! 0 &\!\! 0 &\!\! 1 &\!\! 0 &\!\! 1 &\!\! 0 &\!\! 0 &\!\! 0 &\!\! 0 &\!\! 0 &\!\! 0 &\!\! 0 &\!\! 
         1 &\!\! 0  \\
        1 &\!\! 0 &\!\! 0 &\!\! 0 &\!\! 0 &\!\! 0 &\!\! 0 &\!\! 0 &\!\! 0 &\!\! 0 &\!\! 1 &\!\! 0 &\!\! 0 &\!\! 0 &\!\!
          0 &\!\! 0 &\!\! 0 &\!\! 0 &\!\! 1 &\!\! 0 &\!\! 0 &\!\! 0 &\!\! 0 &\!\! 0 &\!\! 0 &\!\! 0 &\!\! 0 &\!\! 
         0 &\!\! 1
\end{pmatrix}
} ,
\end{align*}
\end{proposition}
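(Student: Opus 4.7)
The approach is direct computation, similar in spirit to the proof of Proposition~\ref{prop:Xaction}, but we can structure it to minimise the computational load. The key simplification is to factor the problem through the equivalence $A = X^{\dagger} \otimes X$ that defines~$A$: the endofunctor $A \otimes -$ on $\hmfgr(\C[x,y,z], W^{(\mathrm{A}_n)})$ is the composition of $X\otimes -$ crossing over to the E-type side and $X^{\dagger}\otimes -$ crossing back. Both constituents distribute over direct sums and are tracked in Theorem~\ref{thm:ADEOE} and (partially) in Proposition~\ref{prop:Xaction}, so once the action of $X$ on the A-side indecomposables and the action of $X^{\dagger}$ on the E-side indecomposables are tabulated, the matrix $M^{(\mathrm{A}_n)}$ is obtained as the corresponding matrix product.

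A more conceptual shortcut is available from the explicit description~\eqref{eq:Psums}, which presents $A$ (after Kn\"orrer periodicity to the two-variable form) as a direct sum of permutation matrix factorisations $P_S$ of $x'^d - x^d$ with $d \in \{12, 18, 30\}$. Since tensor product is additive and the action of permutation matrix factorisations on the indecomposables $T_j^{(\mathrm{A}_{d-1})}$ of $\hmfgr(\C[x,y,z], W^{(\mathrm{A}_{d-1})})$ is well understood combinatorially (via the results of \cite{cr1006.5609, DRCR} on permutation matrix factorisations), each row of $M^{(\mathrm{A}_n)}$ can in principle be read off as a sum of contributions $P_S \otimes T_j^{(\mathrm{A}_{d-1})}$ indexed by the summands of~$A$.

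In practice, either route reduces to invoking the Singular implementation of the tensor product and idempotent-splitting algorithms from~\cite{khovhompaper}: compute $A \otimes T_j^{(\mathrm{A}_n)}$ for each $j \in \{1,\dots, n\}$, find a complete set of primitive orthogonal idempotents in the endomorphism algebra of the resulting matrix factorisation, and split them to read off the multiplicities of the $T_i^{(\mathrm{A}_n)}$. Matching against the tabulated matrices $M^{(\mathrm{A}_{11})}$, $M^{(\mathrm{A}_{17})}$, $M^{(\mathrm{A}_{29})}$ then finishes the proof.

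The main obstacle is bookkeeping complexity rather than conceptual difficulty. For $W^{(\mathrm{A}_{29})} \sim W^{(\mathrm{E}_8)}$ the 1-morphism $X$ is already rank~4, so $A = X^{\dagger}\otimes X$ has comparatively large rank, and the tensor products $A \otimes T_j^{(\mathrm{A}_{29})}$ produce sizeable matrix factorisations whose decomposition into 29 distinct indecomposable classes must be verified entry by entry. Since the statement is understood up to $\mathbb{Z}_2$-shift and $\mathbb{Q}$-grading shift, a second subtlety is that one must organise the computation so as to identify summands of $A\otimes T_j^{(\mathrm{A}_n)}$ with the correct indecomposable class $T_i^{(\mathrm{A}_n)}$ regardless of how many internal shifts $\{k\}$ intervene; this is handled by comparing underlying rank and support data before matching gradings.
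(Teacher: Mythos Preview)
Your proposal is correct and lands on the same two methods the paper uses: direct computation in Singular via the implementation of \cite{khovhompaper}, and alternatively the decomposition~\eqref{eq:Psums} of~$A$ into permutation matrix factorisations together with the known fusion rules for the $P_S$ from \cite{br0707.0922,cr1006.5609,DRCR}. Your extra suggestion to factor $A\otimes-$ as $(X^{\dagger}\otimes-)\circ(X\otimes-)$ is a reasonable organisational device, but note that it is not self-contained in the $\mathrm{E}_8$ case since Proposition~\ref{prop:Xaction} does not record the action of~$X$ there; you would still have to fall back on one of the other two routes, which is precisely what the paper does.
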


\begin{proof}
Direct computation using the methods implemented in Singular in \cite{khovhompaper}. 

Alternatively, one can use the expressions for~$A$ given on the right-hand sides of~\eqref{eq:Psums} together with the results on tensor products of permutation matrix factorisations in \cite{br0707.0922,cr1006.5609,DRCR}. 
\end{proof}

We observe that curiously the numbers of nonzero eigenvalues of the matrices $M^{(\mathrm{A}_{11})}$, $M^{(\mathrm{A}_{17})}$, and $M^{(\mathrm{A}_{29})}$ encoding the action of~$A$ on Grothendieck groups respectively are 6, 7, and 8: 
\begin{align*}
& M^{(\mathrm{A}_{11})} \text{ has eigenvalues } \big(3 - \sqrt{3}\big)^{\times 2}, \, 2^{\times 2}, \, \big(3 + \sqrt{3}\big)^{\times 2}
\, , 
\\
& M^{(\mathrm{A}_{17})} \text{ has approximate eigenvalues } 0.9358^{\times 2}, \,  3, \, 3.305^{\times 2},
7.759^{\times 2}
\, , 
\\
& M^{(\mathrm{A}_{29})} \text{ has approximate eigenvalues } 2.229^{\times 2} , \, 3.368^{\times 2}, \, 4.923^{\times 2}, \, 19.48^{\times 2}
\, . 
\end{align*}

\section{Calabi-Yau completions and orbifold equivalences}\label{sec:liftingOE}

In this section we 
	discuss under which assumptions 
an orbifold equivalence between two homologically smooth and proper dg algebras implies an orbifold equivalence between their Calabi-Yau completions. 
	If these assumptions hold, 
the ADE equivalences of Section~\ref{subsubsec:ADEorbi} lift to orbifold equivalences between Ginzburg algebras of Dynkin quivers. 

\medskip

To make sense of the concept of orbifold equivalence between dg algebras we need to organise the contents of Section~\ref{subsec:perfectdual} into a single bicategory which is studied in detail in \cite[App.\,A.2]{bfk1105.3177v3}: 

\begin{definition}\label{def:DGs}
The \textsl{bicategory of homologically smooth dg algebras} $\DGs_k$ has homologically smooth dg algebras over~$k$ as objects, and its categories of 1-morphisms $A\rightarrow B$ are those of perfect modules, namely~$\DGs_k(A,B) = \Perf( A^{\textrm{op}} \otimes_k B)$. 
Horizontal composition in $\DGs_k$ is given by the derived tensor product over the intermediate dg algebra, i.\,e.~for $X \in \DGs_k(A,B)$ and $Y \in \DGs_k(B,C)$ we have $Y\otimes X = Y \Lotimesl_B X$,
and the unit on~$A$ is given by $I_A = A$ viewed as an $A^{\textrm{op}} \otimes_k A$-module. 
\end{definition}

A dg Morita equivalence between homologically smooth dg algebras $A,B$ is precisely an equivalence $A \cong B$ in $\DGs_k$. Thus, since every equivalence has invertible quantum dimensions, Morita equivalence is a special kind of orbifold equivalence in $\DGs_k$.

\subsection[Adjunctions and quantum dimensions in $\DGsp_k$]{Adjunctions and quantum dimensions in $\boldsymbol{\DGsp_k}$}

To discuss adjoints, we restrict our considerations to homologically smooth dg algebras which are also proper. 
They form a full subbicategory $\DGsp_k \subset \DGs_k$ which is equivalent to the bicategory of saturated dg categories as shown in \cite[Prop.\,A.11\,\&\,A.12]{bfk1105.3177v3}. More importantly for us, though, we have the following immediate consequence of Proposition~\ref{prop:2.5}.

\begin{proposition}
$\DGsp_k$ has left and right adjoints.
\end{proposition}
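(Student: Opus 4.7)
The plan is to read the proposition as a bicategorical packaging of Proposition~\ref{prop:2.5}. Fix a 1-morphism $X \in \DGsp_k(A,B) = \Perf(A^{\op} \otimes_k B)$. As candidates for its bicategorical left and right adjoints I would take ${}^{\dagger}\! X = X^{\vee} \otimes_B \Omega_B$ and $X^{\dagger} = \Omega_A \otimes_A X^{\vee}$ from~\eqref{eqn:2.2}. First one checks that both objects lie in $\DGsp_k(B,A)$: since~$A$ and~$B$ are homologically smooth and proper, the dualising complexes $\Omega_A, \Omega_B$ and the dual $X^{\vee}$ are perfect bimodules, and perfectness is stable under derived tensor product over a homologically smooth algebra as recalled in Section~\ref{subsect:1.3}.

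Next, Proposition~\ref{prop:2.5} supplies the adjoint pairs $(X^{\dagger} \Lotimes_B -) \dashv (X \Lotimes_A -) \dashv ({}^{\dagger}\! X \Lotimes_B -)$ at the level of tensor functors on perfect derived categories. The task is to extract from these bicategorical adjunction 2-morphisms $\tev_X, \tcoev_X, \ev_X, \coev_X$ satisfying the Zorro moves~\eqref{eq:Zorro}. For the right adjoint~$X^{\dagger}$, I would evaluate the counit $X \otimes_A X^{\dagger} \otimes_B - \to \operatorname{id}_{\Perf(B)}$ at~$B$ to obtain $\tev_X \colon X \otimes_A X^{\dagger} \to I_B$ in $\Perf(B^{\op}\otimes_k B)$, and the unit $\operatorname{id}_{\Perf(A)} \to X^{\dagger} \otimes_B X \otimes_A -$ at~$A$ to obtain $\tcoev_X \colon I_A \to X^{\dagger} \otimes_B X$; the maps $\ev_X, \coev_X$ for the left adjoint ${}^{\dagger}\! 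X$ are produced analogously. That these really are bimodule maps, and not merely maps of $k$-modules, is automatic because the tensor functors are pseudo-natural in the bimodule structures (an Eilenberg--Watts-type observation at the derived level).

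Finally, the Zorro moves~\eqref{eq:Zorro} in $\DGsp_k$ would follow from the triangle identities of the functor adjunctions via the Yoneda-type identification: a 2-morphism $M \to N$ in $\Perf(C^{\op} \otimes_k D)$ is recovered from the induced natural transformation $M \Lotimes_C - \to N \Lotimes_C -$ by evaluating at the unit $C \in \Perf(C)$, using the canonical isomorphism $M \Lotimes_C C \cong M$. Applying $-\Lotimes_A -$ to each Zorro composite turns it into the corresponding triangle composite of the functor adjunction, which is the identity by Proposition~\ref{prop:2.5}; consequently the Zorro composites at the bimodule level are identities as well. The main (and only) obstacle I anticipate is bookkeeping the interplay of the bicategory's associators and unitors with the canonical isomorphisms used when evaluating at the unit objects; this is made tractable by the explicit K-projective representatives of the canonical maps $\zeta$ and $\widetilde{\zeta}$ given in Section~\ref{subsect:1.3}.
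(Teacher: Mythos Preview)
Your proposal is correct and follows exactly the route the paper takes: the paper states the proposition as an ``immediate consequence of Proposition~\ref{prop:2.5}'' without writing out any further argument, so your expansion of how the functor-level adjunctions of Proposition~\ref{prop:2.5} yield bicategorical adjunction data (via evaluation at the units and the Eilenberg--Watts identification) is precisely the unpacking of that one-line justification.
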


In this subsection we shall present these adjunctions very explicitly in terms of K-projective resolutions, which will lead to simple expressions for quantum dimensions in $\DGsp_k$ in terms of Casimir elements. 

\medskip

For a $1$-morphism $X \in \DGsp_k (A,B)$ the left and right adjoints are given in terms of dualising complexes by 
$
\dX=X^{\vee} \otimes_B \Omega_B
$
and
$
X^{\dagger}=\Omega_A \otimes_A X^{\vee}
$, 
as in~\eqref{eqn:2.2}. From these expressions it is evident that generically $1$-morphisms in $\DGsp_k $ are not ambidextrous. 

\begin{lemma}
\label{lem:5.2}
Let $A,B \in \DGsp_k$ and let $X \in \DGsp_k (A,B)$ be ambidextrous. Then there are canonical isomorphisms
$$
\dX \otimes_B \Theta_B \cong \Theta_A \otimes_A \dX
\, , \quad 
X^{\dagger} \otimes_B \Theta_B \cong \Theta_A \otimes_A X^{\dagger} \, .
$$
\end{lemma}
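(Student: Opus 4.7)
The plan is to derive both claimed isomorphisms from the single unconditional identity~\eqref{eqn:2.3}, namely $\dX \cong \Theta_A \otimes_A X^{\dagger} \otimes_B \Omega_B$, combined with the Serre cancellation relations~\eqref{eqn:2.1} and the ambidextrous isomorphism $\alpha_X \colon \dX \to X^{\dagger}$ supplied by the hypothesis. The key observation is that~\eqref{eqn:2.3} already encodes, unconditionally, the conjugation relation between the two adjoints via the Serre twist; ambidextrousness then collapses this relation into the two statements of the lemma.

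First I would tensor~\eqref{eqn:2.3} on the right with $\Theta_B$, apply associativity, and use $\Omega_B \otimes_B \Theta_B \cong B$ from~\eqref{eqn:2.1} to cancel an adjacent pair. This produces the basic isomorphism
$$
\dX \otimes_B \Theta_B \cong \Theta_A \otimes_A X^{\dagger},
$$
which holds for every $X \in \DGsp_k(A,B)$, with no ambidextrousness required. The two statements of the lemma are then immediate: using $\alpha_X$ to replace $X^{\dagger}$ on the right by $\dX$ yields $\dX \otimes_B \Theta_B \cong \Theta_A \otimes_A \dX$, while using $\alpha_X$ to replace $\dX$ on the left by $X^{\dagger}$ yields $X^{\dagger} \otimes_B \Theta_B \cong \Theta_A \otimes_A X^{\dagger}$. (Equivalently, one can run the argument symmetrically by tensoring~\eqref{eqn:2.3} on the left with $\Omega_A$ and then with $\Theta_A$; the two presentations agree up to the canonical cancellations.)

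I do not anticipate any substantive obstacle here; the argument is essentially formal once~\eqref{eqn:2.1} and~\eqref{eqn:2.3} are in hand. The only care required is to realize every component isomorphism at the level of the fixed K-projective representatives used throughout Section~\ref{subsec:perfectdual}, so that the derived tensor products under consideration are computed as ordinary tensor products of chain complexes and the composites are genuine morphisms in $\Perf(B^{\op} \otimes_k A)$. Canonicity, as asserted in the statement, reduces to the fact that associativity, the Serre cancellations~\eqref{eqn:2.1}, and the isomorphism~\eqref{eqn:2.3} are each canonical in their own right, so that the final composite depends only on the choice of the ambidextrous structure $\alpha_X$.
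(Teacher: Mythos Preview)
Your proposal is correct and follows essentially the same approach as the paper: the paper's proof also tensors~\eqref{eqn:2.3} on the right with~$\Theta_B$, applies the cancellation~\eqref{eqn:2.1} to obtain $\dX \otimes_B \Theta_B \cong \Theta_A \otimes_A X^{\dagger}$, and then invokes ambidextrousness. Your write-up is simply a more explicit version of the same argument.
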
 

\begin{proof}
Putting together the isomorphisms given in \eqref{eqn:2.1} and \eqref{eqn:2.3}, we have
$$
{}^{\dagger}\! X \otimes_B \Theta_B \cong \Theta_A \otimes_A X^{\dagger} \, . 
$$
Since $X$ is assumed to be ambidextrous, the assertion follows.
\end{proof}

It will be convenient to have alternative expressions for the left and right adjoints of a $1$-morphism in $\DGsp_k $. This is the content of the next lemma.

\begin{lemma}
\label{lem:5.3}
Let $A,B \in \DGsp_k$ and $X \in \DGsp_k (A,B)$. Then there are canonical isomorphisms
\be\label{eq:XdualRhom}
{}^{\dagger}\! X \cong \RHom_{A^{\op}}(X,A)
\, , \quad 
X^{\dagger} \cong \RHom_B(X,B) \, .
\ee
\end{lemma}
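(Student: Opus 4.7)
My plan is to exploit uniqueness of adjoints up to canonical isomorphism, together with Proposition~\ref{prop:2.5}. The bicategorical adjunction data $\ev_X,\coev_X$ (and $\tev_X,\tcoev_X$) simultaneously induce adjunctions for the composition functors on both left and right modules, while the derived tensor-hom adjunctions pin down the same adjoints in terms of $\RHom$. Matching the two descriptions and evaluating at the unit module will deliver the two isomorphisms.

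For $X^{\dagger}\cong\RHom_B(X,B)$ I work with the left-module functor $X\Lotimesl_A-\colon\Perf(A)\to\Perf(B)$. The derived tensor-hom adjunction
\[
\RHom_B(X\Lotimes_A P,N)\;\cong\;\RHom_A\bigl(P,\RHom_B(X,N)\bigr)
\]
exhibits $\RHom_B(X,-)\colon\Perf(B)\to\Perf(A)$ as one of the adjoints of $X\Lotimesl_A-$, and Proposition~\ref{prop:2.5} does the same for $X^{\dagger}\Lotimesl_B-$. By uniqueness of adjoints the two functors are naturally isomorphic, so evaluating at $N=B$ yields $X^{\dagger}\cong\RHom_B(X,B)$.

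For $\dX\cong\RHom_{A^{\op}}(X,A)$ I pass to the right-module viewpoint, where the bicategorical adjunction between $X$ and $\dX$ also produces an adjunction between the composition functors $-\Lotimesl_B X\colon\Perf(B^{\op})\to\Perf(A^{\op})$ and $-\Lotimesl_A\dX\colon\Perf(A^{\op})\to\Perf(B^{\op})$. The tensor-hom adjunction on right modules identifies $\RHom_{A^{\op}}(X,-)$ as the corresponding adjoint of $-\Lotimesl_B X$, and uniqueness then gives $-\Lotimesl_A\dX\cong\RHom_{A^{\op}}(X,-)$; evaluating at $A$ finishes the step. As a cross-check, the same identity can be deduced from the first isomorphism together with equation~\eqref{eqn:2.3} and Proposition~\ref{prop:2.4}, using the computation $\RHom_B(X,B)\Lotimes_B\Omega_B\cong X^{*}$ (from Proposition~\ref{prop:2.4}(ii) with $N=\Omega_B$ and $\RHom_B(X,B^{*})\cong X^{*}$) and the analogous formula $\RHom_{A^{\op}}(X,A)\cong\Theta_A\otimes_A X^{*}$ obtained by contracting with $\Omega_A$ via Proposition~\ref{prop:2.4}(i).

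The only real pitfall will be the bookkeeping: one has to track carefully which structure on $X$ is being used for each adjunction---its left $B$-module versus its right $A$-module structure---since $\dX$ and $X^{\dagger}$ correspond to mutually dual descriptions. With this care in place, each individual adjunction and uniqueness argument is entirely routine, and the resulting isomorphisms are canonical and functorial in $X$.
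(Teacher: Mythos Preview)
Your argument is correct, but it follows a genuinely different route from the paper's proof. The paper works directly with the \emph{definitions} ${}^{\dagger}\! X = X^{\vee}\otimes_B\Omega_B$ and $X^{\dagger}=\Omega_A\otimes_A X^{\vee}$ from~\eqref{eqn:2.2}, expands $X^{\vee}=\RHom_{A^{\op}\otimes_k B}(X,A^{\op}\otimes_k B)$, and then invokes \cite[Lem.\,3.26]{Petit13} to compute
\[
X^{\vee}\Lotimes_B (B^{\op})^* \cong \RHom_{A^{\op}}(X,A)
\quad\text{and}\quad
(A^{\op})^*\Lotimes_A X^{\vee} \cong \RHom_B(X,B)
\]
as bimodules. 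So the paper never appeals to uniqueness of adjoints at all; it is a direct manipulation of the dualising complexes.

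Your approach, by contrast, is purely categorical: you recognise $\RHom_B(X,-)$ and $X^{\dagger}\Lotimesl_B-$ as two descriptions of the same right adjoint to $X\Lotimesl_A-$ (and dually on the right-module side for ${}^{\dagger}\! X$), and then evaluate at the unit. This is cleaner conceptually and avoids the external reference to \cite{Petit13}. The one point you should make explicit is that your comparison is, a priori, only an isomorphism of functors $\Perf(B)\to\Perf(A)$, hence at $N=B$ an isomorphism in $\Perf(A)$; to upgrade it to an isomorphism of perfect $B^{\op}\otimes_k A$-modules you need that the functor determines the bimodule. The paper uses exactly this principle later (see the proof of Proposition~\ref{prop:5.4}: ``since the functor determines the perfect dg $B^{\op}\otimes_k A$-module up to isomorphism''), so in the present context it is acceptable, but it is worth stating. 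Your cross-check paragraph via~\eqref{eqn:2.3} and Proposition~\ref{prop:2.4} is essentially a compressed version of the paper's own computation and could be dropped.
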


\begin{proof}
By definition we know that ${}^{\dagger}\! X=X^{\vee} \otimes_B \Omega_B \cong X^{\vee} \Lotimesl_B (B^{\op})^*$ and $X^{\dagger}=\Omega_A \otimes_A X^{\vee} \cong (A^{\op})^* \Lotimesl_A X^{\vee}$. Therefore, in view of \cite[Lem.\,3.26]{Petit13}, we have
$$
{}^{\dagger}\! X \cong \RHom_{A^{\op}\otimes_k B}(X,A^{\op}\otimes_k B) \Lotimes_B (B^{\op})^* \cong \RHom_{A^{\op}}(X,A) \, ,
$$
and similarly
$$
X^{\dagger} \cong (A^{\op})^* \Lotimes_A \RHom_{A^{\op}\otimes_k B}(X,A^{\op}\otimes_k B) \cong \RHom_{B}(X,B) \, .
$$
This completes the proof.
\end{proof}

As a consequence of this result we obtain the following useful characterisation of an ambidextrous $1$-morphism in the bicategory $\DGsp_k $.

\begin{proposition}
\label{prop:5.4}
Let $A,B \in \DGsp_k$ and $X \in \DGsp_k (A,B)$. Then $X$ is ambidextrous if and only if there is an isomorphism
$$
X \otimes_A \Theta_A \cong \Theta_B \otimes_B X \, .
$$
\end{proposition}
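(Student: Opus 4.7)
The plan is to reformulate the ambidextrous condition $\dX \cong X^\dagger$ as the intertwining isomorphism of Lemma~\ref{lem:5.2}, and then to transfer that isomorphism from $X^\dagger$ to~$X$ by applying the right-adjoint operation $(-)^\dagger$. The key auxiliary fact is that~\eqref{eqn:2.1} exhibits $\Theta_A$ as an equivalence $A \to A$ in $\DGsp_k$ with quasi-inverse $\Omega_A$; in any bicategory with adjoints the left and right adjoints of an equivalence coincide with its quasi-inverse, so one has $\Theta_A^\dagger \cong \Omega_A$ and likewise $\Theta_B^\dagger \cong \Omega_B$.

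For the forward direction, assume $X$ is ambidextrous. Then Lemma~\ref{lem:5.2} provides
\begin{equation*}
X^\dagger \otimes_B \Theta_B \cong \Theta_A \otimes_A X^\dagger.
\end{equation*}
I would apply $(-)^\dagger$ to both sides. Using the contravariant composition rule $(Y \otimes Z)^\dagger \cong Z^\dagger \otimes Y^\dagger$, the identifications $\Theta_A^\dagger \cong \Omega_A$ and $\Theta_B^\dagger \cong \Omega_B$, together with $(X^\dagger)^\dagger \cong X$ (which is just a rephrasing of ambidextrousness, since $\dX \cong X^\dagger$ precisely means that $X^\dagger$ is left adjoint to $X$), the dualised equation becomes
\begin{equation*}
\Omega_B \otimes_B X \cong X \otimes_A \Omega_A.
\end{equation*}
Tensoring by $\Theta_A$ on the right and $\Theta_B$ on the left and cancelling factors via~\eqref{eqn:2.1} then yields the desired $X \otimes_A \Theta_A \cong \Theta_B \otimes_B X$.

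For the converse, one starts from this last isomorphism and runs essentially the same dualisation. Applying $(-)^\dagger$ and using contravariance together with $\Theta_A^\dagger \cong \Omega_A$, $\Theta_B^\dagger \cong \Omega_B$ produces $\Omega_A \otimes_A X^\dagger \cong X^\dagger \otimes_B \Omega_B$; substituting the explicit formula $X^\dagger \cong \Omega_A \otimes_A X^\vee$ from~\eqref{eqn:2.2} and cancelling the common factor of $\Omega_A$ on both sides (again via~\eqref{eqn:2.1}) gives $\Omega_A \otimes_A X^\vee \cong X^\vee \otimes_B \Omega_B$, which is exactly $X^\dagger \cong \dX$. Note that this direction does \emph{not} invoke $(X^\dagger)^\dagger \cong X$, which would be circular here. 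The only delicate point in the whole argument is the identification $\Theta_A^\dagger \cong \Omega_A$, and once this is granted the remainder is a routine chain of natural isomorphisms assembled from~\eqref{eqn:2.1},~\eqref{eqn:2.2},~\eqref{eqn:2.3}, and the functoriality of $(-)^\dagger$, all of which can moreover be made explicit at the level of K-projective representatives of $\Theta_A, \Omega_A, \Theta_B, \Omega_B$ and $X^\vee$.
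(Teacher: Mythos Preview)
Your argument is correct, and it takes a cleaner, more bicategorical route than the paper. The paper proceeds by first establishing the intermediate characterisation ``$X$ is ambidextrous $\iff (X^{\dagger})^{\dagger} \cong X$'' via the $\RHom$ description of adjoints in Lemma~\ref{lem:5.3} and reflexivity of perfect modules, and then computes ${}^{\dagger}(X^{\dagger})$ in two different ways (one always yielding~$X$, the other yielding a Serre-twist of $(X^{\dagger})^{\dagger}$) to extract the intertwining condition. You bypass Lemma~\ref{lem:5.3} entirely: in the forward direction you feed the ambidextrous hypothesis into Lemma~\ref{lem:5.2} and then dualise, and in the converse you dualise the hypothesis and unwind the explicit formula~\eqref{eqn:2.2}. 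The only additional input you need is the purely bicategorical observation that $\Theta_A^{\dagger} \cong \Omega_A$ because $\Theta_A$ is an equivalence with quasi-inverse~$\Omega_A$ by~\eqref{eqn:2.1}. This makes your proof shorter and more transparent about which facts are genuinely module-theoretic versus formal; the paper's approach, on the other hand, has the advantage of making the reflexivity $(X^{\dagger})^{\dagger} \cong X$ explicit as a separate fact, which is of independent interest and gets reused implicitly later in the paper.
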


\begin{proof}
We first show that $X$ is ambidextrous if and only if $(X^{\dagger})^{\dagger} \cong X$. If $X$ is ambidextrous then, by Lemma \ref{lem:5.3}, we have the following sequence of isomorphisms
\be\label{eq:Xdd}
(X^{\dagger})^{\dagger} \cong ({}^{\dagger}\! X)^{\dagger} \cong \RHom_A ({}^{\dagger}\! X, A) \cong \RHom_A (\RHom_{A^{\op}}(X,A),A) \, .
\ee
Since $B$ is assumed to be proper, $X$ is perfect as a dg $A^{\op}$-module, which implies that the right-hand side of~\eqref{eq:Xdd} is isomorphic to $X$. Conversely, if $(X^{\dagger})^{\dagger} \cong X$ then, applying Proposition~\ref{prop:2.5}, we get two pairs of adjoint functors $({}^{\dagger}\! X \Lotimesl_B - , X \Lotimesl_A -)$ and $(X^{\dagger}\Lotimesl_B -, X\Lotimesl_A-)$. Thus there is an isomorphism of functors
$$
{}^{\dagger}\! X \Lotimesl_B - \cong X^{\dagger}\Lotimesl_B - \, , 
$$
and since the functor determines the perfect dg $B^{\op} \otimes_k A$-module up to isomorphism, we have ${}^{\dagger}\! X \cong X^{\dagger}$ as perfect dg $B^{\op} \otimes_k A$-modules.

Let us now prove the statement. Using the isomorphism ${}^{\dagger}\! X \cong \Theta_A \otimes_A X^{\dagger} \otimes_B \Omega_B$ of~\eqref{eqn:2.3}, we find an isomorphism
$$
{}^{\dagger}\!(X^{\dagger}) \cong \Omega_B \otimes_B (X^{\dagger})^{\dagger} \otimes_A \Theta_A \, . 
$$
On the other hand, again by Lemma \ref{lem:5.3}, we have a sequence of isomorphisms
\be\label{eq:dXd}
{}^{\dagger}\!(X^{\dagger}) \cong \RHom_{B^{\op}}(X^{\dagger},B) \cong \RHom_{B^{\op}}(\RHom_B (X,B),B) \, .
\ee
Because $A$ is assumed to be proper, we have that $X$ is perfect as a dg $B$-module and therefore the right-hand side of~\eqref{eq:dXd} is isomorphic to $X$. The desired conclusion is now a consequence of the preceding remarks. 
\end{proof}

Our next goal is to isolate representatives in the homotopy category of the two pairs of  evaluation and coevaluation maps associated to a given ambidextrous $1$-morphism $X \in \DGsp_k (A,B)$. 
To start with, we shall always regard the canonical isomorphisms~\eqref{eq:XdualRhom} as identifications. Owing to Proposition \ref{prop:2.4}, there are canonical isomorphisms
\begin{align}
\nu_X &\colon X \Lotimes_A {}^{\dagger}\! X \xlongrightarrow{\cong}\RHom_{A^{\op}}(X,X) 
\, , \label{eq:nuX}
\\
\widetilde{\nu}_X&\colon X^{\dagger} \Lotimes_B X \xlongrightarrow{\cong}\RHom_{B}(X,X)
\, . \label{eq:nutildeX}
\end{align}
Following the remark after Proposition \ref{prop:2.4}, we obtain a representative in $\Kbf(B^{\op} \otimes_k B)$ and $\Kbf(A^{\op}\otimes_k A)$, respectively, for these canonical isomorphisms. 

To make this more explicit, we choose a K-projective resolution $\pi\colon P \to X$ in $\Dbf(A^{\op}\otimes_k B)$ and put ${}^{\dagger}\! P=\Hom_{A^{\op}}(P,A)$ and $P^{\dagger}=\Hom_B (P,B)$. Then~\eqref{eq:nuX} is represented by the map 
\begin{align}
\nu_P \colon P \otimes_A {}^{\dagger}\! P & \lra \Hom_{A^{\op}}(P,P) \, ,
\nonumber
\\
p \otimes f & \longmapsto \big( p' \mapsto p f(p') \big) \, . 
\label{eq:numap}
\end{align}
Correspondingly, \eqref{eq:nutildeX} is represented by the map 
\begin{align*}
\widetilde{\nu}_P \colon P^{\dagger} \otimes_B P & \lra \Hom_{B}(P,P) \, ,
\\
g \otimes p & \longmapsto \big( p' \mapsto (-1)^{\vert p' \vert \vert p \vert}g(p')p  \big) \, .
\end{align*}

The \textsl{Casimir elements} $\sum_i x_i \otimes {}^{\dagger} x_i \in P \otimes_A {}^{\dagger}\! P$ and $\sum_i y_i^{\dagger} \otimes y_i \in P^{\dagger} \otimes_B P$ are defined as the preimages of the identity under the isomorphisms $\nu_P$ and $\widetilde \nu_P$, respectively: 
$$
\sum_i x_i \otimes {}^{\dagger} x_i = \nu_P^{-1}( 1_P ) 
\, , \quad
\sum_i y_i^{\dagger} \otimes y_i = \widetilde \nu_P^{-1}( 1_P )  \, .
$$
With this preparation, we can now present explicit representatives for the adjunction maps 
\be\label{eq:adjX}
\begin{aligned}[c]
\ev_X&\colon {}^{\dagger}\! X \Lotimes_B X \lto A \, ,\\
\tev_X&\colon X \Lotimes_A X^{\dagger} \lto B \, ,
\end{aligned}
\qquad
\begin{aligned}[c]
\coev_X &\colon B \lto X \Lotimes_A {}^{\dagger}\! X \, ,\\
\tcoev_X &\colon A \lto X^{\dagger} \Lotimes_B X \, .\\
\end{aligned}
\ee

\begin{proposition}
\label{prop:4.5}
Let $X \in \DGsp_k (A,B)$ and let $\pi\colon P \to X$ be a K-projective resolution as above. Then the adjunction maps~\eqref{eq:adjX} are respectively represented in $\Kbf(A^{\op}\otimes_k A)$ and $\Kbf(B^{\op}\otimes_k B)$ by the maps 
$$
\begin{aligned}[c]
\varepsilon_P&\colon {}^{\dagger}\! P \otimes_B P \lto A \, ,\\
\widetilde{\varepsilon}_P&\colon P \otimes_A P^{\dagger} \lto B \, ,
\end{aligned}
\qquad
\begin{aligned}[c]
\eta_P &\colon B \lto P \otimes_A {}^{\dagger}\! P \, ,\\
\widetilde{\eta}_P &\colon A \lto P^{\dagger} \otimes_B P \, ,\\
\end{aligned}
$$
given by
$$
\begin{aligned}[c]
\varepsilon_P(f \otimes p)&= f(p) \, , \vphantom{\sum_i}\\
\widetilde{\varepsilon}_P(p \otimes g)&= (-1)^{\vert p\vert \vert g\vert} g(p) \, , \vphantom{\sum_i}
\end{aligned}
\qquad
\begin{aligned}[c]
\eta_P (b)&= \sum_i x_i \otimes {}^{\dagger} x_i b \, ,\\
\widetilde{\eta}_P(a) &=\sum_i y_i^{\dagger} \otimes y_i a \, . 
\end{aligned}
$$
\end{proposition}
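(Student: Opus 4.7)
The plan is to exploit the uniqueness of adjunction data in $\Kbf$: once one exhibits explicit representatives of unit--counit pairs that satisfy the Zorro moves, they must represent the abstract adjunction maps $\ev_X,\coev_X,\tev_X,\tcoev_X$ from Proposition~\ref{prop:2.5} after descending to $\Dbf$.

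First I would check that $\varepsilon_P,\widetilde{\varepsilon}_P,\eta_P,\widetilde{\eta}_P$ are genuine dg bimodule maps with the stated sources and targets. The evaluation maps are the canonical ``evaluate a function on its argument'' pairings under the identifications $\dP=\Hom_{A^{\op}}(P,A)$ and $P^{\dagger}=\Hom_B(P,B)$ of Lemma~\ref{lem:5.3}; compatibility with the differential and bimodularity are routine, with the sign in $\widetilde{\varepsilon}_P$ forced by the Koszul rule. For the coevaluations, the crucial point is the centrality of the Casimir elements: since formula~\eqref{eq:numap} makes $\nu_P$ a map of $B$-$B$-bimodules and its target $\Hom_{A^{\op}}(P,P)$ has $1_P$ as a central element, the preimage $\sum_i x_i\otimes {}^{\dagger} x_i$ satisfies $\sum_i b\,x_i\otimes {}^{\dagger} x_i=\sum_i x_i\otimes {}^{\dagger} x_i\,b$ for every $b\in B$, and this is precisely what makes $\eta_P(b)=\sum_i x_i\otimes {}^{\dagger} x_i\,b$ a well-defined $B$-$B$-bimodule morphism. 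An analogous $A$-centrality argument for $\sum_i y_i^{\dagger}\otimes y_i$ handles $\widetilde{\eta}_P$.

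Next I would verify the four Zorro moves; each reduces directly to the defining identity for the corresponding Casimir element. For instance, tracing $p\in P\cong B\otimes_B P$ through $\eta_P\otimes 1_P$ followed by $1_P\otimes\varepsilon_P$ yields $p\mapsto\sum_i x_i\cdot {}^{\dagger} x_i(p)$, which equals $p$ by the very definition $\nu_P(\sum_i x_i\otimes {}^{\dagger} x_i)=1_P$. The companion move, tracing $f\in \dP$ through $1_{\dP}\otimes\eta_P$ and $\varepsilon_P\otimes 1_{\dP}$, produces $\sum_i f(x_i)\cdot {}^{\dagger} x_i$, which equals $f$ upon applying $f$ to the Casimir identity and using its right $A$-linearity. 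The two Zorro moves for $(\widetilde{\varepsilon}_P,\widetilde{\eta}_P)$ follow from $\widetilde{\nu}_P(\sum_i y_i^{\dagger}\otimes y_i)=1_P$ by exactly the same mechanism, with the Koszul signs appearing in $\widetilde{\varepsilon}_P$ and $\widetilde{\nu}_P$ arranged to cancel against one another.

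Once the Zorro moves hold in $\Kbf(A^{\op}\otimes_k A)$ and $\Kbf(B^{\op}\otimes_k B)$, Proposition~\ref{prop:2.5} together with the uniqueness of adjoints in the derived category identifies the proposed maps with $\ev_X,\coev_X,\tev_X,\tcoev_X$ after passage to $\Dbf$. The main technical obstacle I foresee is sign bookkeeping and the tracking of left/right bimodule actions -- specifically, ensuring that the sign conventions in the representative of $\widetilde{\nu}_P$ from Proposition~\ref{prop:2.4} align with those in $\widetilde{\varepsilon}_P$ and in the defining identity for $\sum_i y_i^{\dagger}\otimes y_i$; once these conventions are rendered consistent, the entire argument collapses to the two Casimir identities.
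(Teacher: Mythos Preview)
Your argument is correct, but it proceeds by a genuinely different route than the paper's own proof. The paper works \emph{forward} from the abstract definitions: it recalls that $\ev_X,\tev_X$ are defined as the counits of the Hom--tensor adjunctions $(-\Lotimesl_B X,\RHom_{A^{\op}}(X,-))$ and $(X\Lotimesl_A-,\RHom_B(X,-))$ at $A$ and $B$, and that $\coev_X,\tcoev_X$ are defined by composing the units $\delta_X,\widetilde{\delta}_X$ of those same adjunctions with $\nu_X^{-1},\widetilde{\nu}_X^{-1}$ (and the unitors). It then computes explicit representatives of $\delta_P,\widetilde{\delta}_P$ following \cite{Pauksztello08}, evaluates the composite $\nu_P^{-1}\circ(\lambda_P\circ -)\circ\delta_P$ at the unit $e_B$, and reads off the Casimir element. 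Your approach instead works \emph{backward}: you posit the formulas, verify the Zorro moves directly from the Casimir identities, and invoke uniqueness of adjunction data.

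Both are sound. The paper's route has the advantage of being self-contained with respect to the specific definitions it has already laid down, so no uniqueness statement is needed. Your route is more conceptual and avoids tracing through $\delta_P$ and $\lambda_P$, but it does rely on one point you state only implicitly: uniqueness of adjoints alone is not enough, since two adjunction data on the same pair can differ by an automorphism of the adjoint. What makes your argument close is your earlier observation that $\varepsilon_P,\widetilde{\varepsilon}_P$ are literally the counits of the Hom--tensor adjunctions at $A$ and $B$ under the identifications of Lemma~\ref{lem:5.3}; once the counit is pinned down, the unit is uniquely determined by the Zorro moves, and hence $\eta_P,\widetilde{\eta}_P$ must represent $\coev_X,\tcoev_X$. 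If you make this step explicit in your final paragraph (rather than appealing only to ``uniqueness of adjoints''), the argument is complete.
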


\begin{proof}
We first recall how the evaluation and coevaluation maps associated to~$X$ are defined. As we are treating the 
isomorphisms of Lemma \ref{lem:5.3} as identifications, these maps are determined by the two pairs of adjoint functors $(-\Lotimesl_B X, \RHom_{A^{\op}}(X,-))$ and $(X \Lotimesl_A-,\RHom_B (X,-))$. To be more specific, one defines the evaluation maps $\ev_X$ and $\tev_X$ to be, respectively, the counits at $A$ and $B$ for the former and the latter adjunction. To define the coevaluation maps, one considers correspondingly the units at $B$ and $A$ for these pairs of adjoint functors, which we write as $\delta_X \colon B \to \RHom_{A^{\op}}(X, B\Lotimesl_B X)$ and $\widetilde{\delta}_X\colon A \to \RHom_B (X, X\Lotimesl_A A)$. One then defines $\coev_X$ and $\tcoev_X$ by
$$
\coev_X = \nu_X^{-1} \circ (\lambda_X \circ -)\circ \delta_X 
\, , \quad
\tcoev_X = \widetilde{\nu}_X^{-1} \circ (\rho_X \circ -)\circ \widetilde{\delta}_X \, .
$$

Now we proceed to verify the claim. As we have already remarked at the end of Section~\ref{subsect:2.3}, $P$ is a K-projective object of $\Dbf(A^{\op})$ and $\Dbf(B)$, forgetting the dg $B$-module and dg $A^{\op}$-module structure, respectively. Therefore we can apply the argument given in \cite[Sect.\,4.2]{Pauksztello08} to compute representatives in $\Kbf(A^{\op}\otimes_k A)$ for the evaluation map $\ev_X\colon {}^{\dagger}\! X \Lotimesl_B X \to A$ and the canonical map $\widetilde{\delta}_X\colon A \to \RHom_B (X, X\Lotimesl_A A)$. If we represent the former by $\varepsilon_P \colon {}^{\dagger}\! P \otimes_B P \to A$, as in the statement, and the latter by $\widetilde{\delta}_P\colon A \to \Hom_B (P, P \otimes_A A)$, then they are given by
$$
\varepsilon_P(f \otimes p)=f(p)
\, , \quad 
\big[ \widetilde{\delta}_P(a) \big] (p)=(-1)^{\vert a \vert \vert p \vert} p \otimes a \, ,
$$
respectively, for all $f \in {}^{\dagger}\! P$, $p \in P$ and $a \in A$. 

In an entirely analogous manner, one may compute representatives in $\Kbf(B^{\op}\otimes_k B)$ for the evaluation map $\tev_X\colon X \Lotimesl_A X^{\dagger} \to B$ and the canonical map $\delta_X \colon B \to \RHom_{A^{\op}}(X, B\Lotimesl_B X)$: if we denote the former by $\widetilde{\varepsilon}_P \colon P \otimes_A P^{\dagger} \to B$ and the latter by $\delta_P \colon B \to \Hom_{A^{\op}}(P,B \otimes_B P)$, one obtains
$$
\widetilde{\varepsilon}_P(p \otimes g)=(-1)^{\vert p \vert \vert g \vert}g(p)
\, , \quad 
\big[ \delta_P (b) \big] (p)=b \otimes p
$$
for all $g \in P^{\dagger}$, $p \in P$ and $b \in B$. 

Finally, the canonical maps $\lambda_X \colon B \Lotimesl_B X \to X$ and $\rho_X \colon X \Lotimesl_A A \to X$ are represented in $\Kbf(A^{\op}\otimes_k B)$ by the maps $\lambda_P \colon B \otimes_B P \to P$ and $\rho_P \colon P \otimes_A A \to P$ given by
$$
\lambda_P(b \otimes p)=bp 
\, , \quad 
\rho_P(p \otimes a)=pa
$$
for all $p \in P$, $a \in A$ and $b \in B$. 

These formulas taken together with our previous remarks imply that the coevaluation maps $\coev_X \colon B \to X \Lotimesl_A {}^{\dagger}\! X$ and $\tcoev_X \colon A \to X^{\dagger} \Lotimesl_B X$ are represented in $\Kbf(B^{\op}\otimes_k B)$ and $\Kbf(A^{\op}\otimes_k A)$ by the maps $\eta_P \colon B \to P \otimes_A {}^{\dagger}\! P$ and $\widetilde{\eta}_P\colon A \to X^{\dagger} \otimes_B X$ given by
\begin{align*}
\eta_P =\nu_P^{-1}\circ (\lambda_P \circ-) \circ \delta_P
\, , \quad
\widetilde{\eta}_P =\widetilde{\nu}_P^{-1}\circ (\rho_P \circ-) \circ \widetilde{\delta}_P \, .
\end{align*}
Evaluating these expressions at the unit elements $e_B\in B$ and $e_A\in A$, respectively, we get
$$
\eta_P(e_B)=\sum_i x_i \otimes {}^{\dagger} x_i 
\, ,\quad 
\widetilde{\eta}_P(e_A) =\sum_i y_i^{\dagger} \otimes y_i \, .
$$
The required assertion now follows from the fact that $\eta_P$ and $ \widetilde{\eta}_P$ are, respectively, maps of dg $B^{\op}\otimes_k B$-modules and of dg $A^{\op}\otimes_k A$-modules.
\end{proof}

As an aside it should be noted that, by the definition of the Casimir elements $\sum_i x_i \otimes {}^{\dagger} x_i$ and $\sum_i y_i^{\dagger} \otimes y_i$, we have that for any $p \in P$,
\be\label{eq:Casimirbasis}
p=\sum_i x_i {}^{\dagger} x_i(p)=\sum_i (-1)^{\vert y_i^{\dagger}\vert \vert p \vert} y_i^{\dagger}(p)y_i \, .
\ee
This means that $P$ as a dg $B$-module has a dual basis $\{y_i,y_i^{\dagger}\}$, while as a dg $A^{\op}$-module has a dual basis $\{x_i,{}^{\dagger} x_i \}$.

\medskip

We have now accumulated all the information necessary to provide formulas for quantum dimensions in $\DGsp_k$. 
Recall from Definition~\ref{def:qdims} that the left and right quantum dimensions of an ambidextrous 1-morphism~$X$ involve its adjunction maps as well as the isomorphism $\alpha_X \colon {}^{\dagger}\! X \to X^{\dagger}$. For a K-projective resolution $\pi \colon P \to X$ we can choose a representative $\alpha_P \colon {}^{\dagger}\! P \to P^{\dagger}$ of $\alpha_X$ in $\Kbf(B^{\op}\otimes_k A)$. 

\begin{proposition}
\label{prop:dimP}
Let $X \in \DGsp_k (A,B)$ be ambidextrous and let $\pi\colon P \to X$ be a K-projective resolution as above. 
Then the left and right quantum dimensions of~$X$ are represented by the maps $\diml(P) \in \End_{\Kbf(A^{\op}\otimes_k A)}(A)$ and $\dimr(P) \in \End_{\Kbf(B^{\op}\otimes_k B)}(B)$ given by left multiplication with distinguished elements: 
\begin{align*}
\diml(P) : a & \longmapsto \sum_i \big[\alpha_P^{-1}(y_i^{\dagger})\big](y_i) \cdot a \, , 
\\
\dimr(P) : b & \longmapsto \sum_i (-1)^{\vert{}^{\dagger} x_i \vert \vert x_i \vert} \big[\alpha_P ({}^{\dagger} x_i)\big](x_i) \cdot b \, .
\end{align*}
\end{proposition}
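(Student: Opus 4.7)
The plan is to chase the unit element through the compositions defining $\diml$ and $\dimr$, using the explicit K-projective representatives assembled in Proposition~\ref{prop:4.5}. A preliminary observation simplifies matters considerably: since $\diml(X)$ and $\dimr(X)$ are by construction endomorphisms of $I_A=A$ and $I_B=B$ respectively, viewed in the bimodule derived categories $\Dbf(A^{\op}\otimes_k A)$ and $\Dbf(B^{\op}\otimes_k B)$, and since any endomorphism of a dg algebra as a bimodule over itself is left multiplication by a central element determined by its value on the unit, it suffices to evaluate the chosen representatives at $1_A$ and $1_B$.

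For $\diml$, the prescription of Definition~\ref{def:qdims} gives $\diml(X)=\ev_X\circ(\alpha_X^{-1}\otimes 1_X)\circ\tcoev_X$. I would represent $\tcoev_X$ by $\widetilde\eta_P$, the isomorphism $\alpha_X^{-1}$ by a chosen $\alpha_P^{-1}$ in $\Kbf(B^{\op}\otimes_k A)$, and $\ev_X$ by $\varepsilon_P$. Chasing $1_A$: first $\widetilde\eta_P(1_A)=\sum_i y_i^{\dagger}\otimes y_i$, which is just the Casimir element by definition; next $(\alpha_P^{-1}\otimes 1_P)$ sends this to $\sum_i\alpha_P^{-1}(y_i^{\dagger})\otimes y_i$; finally $\varepsilon_P$ produces $\sum_i[\alpha_P^{-1}(y_i^{\dagger})](y_i)$. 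Multiplying through by a general $a\in A$ on the right yields the claimed formula for $\diml(P)$, invoking that $\alpha_P^{-1}(y_i^{\dagger})\in {}^\dagger P=\Hom_{A^{\op}}(P,A)$ is right $A$-linear.

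For $\dimr$, I would proceed symmetrically, representing $\coev_X$, $\alpha_X$, and $\tev_X$ by $\eta_P$, $\alpha_P$, and $\widetilde\varepsilon_P$ respectively. Tracing $1_B$: $\eta_P(1_B)=\sum_i x_i\otimes{}^{\dagger}x_i$; the map $1_P\otimes\alpha_P$ sends this to $\sum_i x_i\otimes\alpha_P({}^{\dagger}x_i)$; and then $\widetilde\varepsilon_P$ contributes the Koszul sign from its definition, giving
\[
\sum_i(-1)^{|x_i|\,|\alpha_P({}^{\dagger}x_i)|}\,[\alpha_P({}^{\dagger}x_i)](x_i).
\]
Since $\alpha_P$ may be chosen of degree zero, $|\alpha_P({}^{\dagger}x_i)|=|{}^{\dagger}x_i|$, and the asserted expression for $\dimr(P)$ follows.

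There is no conceptual obstacle; the argument is a bookkeeping exercise reducing everything to the defining properties of $\varepsilon_P,\widetilde\varepsilon_P,\eta_P,\widetilde\eta_P$ and of the Casimir elements $\sum_i x_i\otimes{}^{\dagger}x_i$ and $\sum_i y_i^{\dagger}\otimes y_i$. The only subtle point is the single Koszul sign introduced by $\widetilde\varepsilon_P$, which accounts for the asymmetry between the two formulas; the observation that a bimodule endomorphism of a dg algebra is determined by its value on the unit conveniently spares us from tracking signs coming from a general element $b\in B$ being transported across graded tensor factors.
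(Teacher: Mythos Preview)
Your proposal is correct and follows essentially the same route as the paper: one represents $\diml(X)$ and $\dimr(X)$ by the compositions $\varepsilon_P\circ(\alpha_P^{-1}\otimes 1_P)\circ\widetilde\eta_P$ and $\widetilde\varepsilon_P\circ(1_P\otimes\alpha_P)\circ\eta_P$ and then plugs in the explicit formulas from Proposition~\ref{prop:4.5}. The paper's proof is terser, leaving the element-chase implicit, whereas you spell it out (including the degree-zero observation for $\alpha_P$ that pins down the sign); your extra remark that a bimodule endomorphism of $A$ is determined by its value at the unit is a harmless convenience not made explicit in the paper.
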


\begin{proof}
In the notation of Proposition \ref{prop:4.5}, the left quantum dimension of $X$ is represented in $\Kbf(A^{\op}\otimes_k A)$ by the map 
$$
\diml(P)=\varepsilon_P \circ (\alpha_P^{-1} \otimes 1_P) \circ \widetilde{\eta}_P \, .
$$
By the same token, the right quantum dimension of $X$ represented in $\Kbf(B^{\op}\otimes_k B)$ by the map 
$$
\dimr(P)=\widetilde{\varepsilon}_P \circ (1_P \otimes \alpha_P) \circ \eta_P \, .
$$
Using the explicit expressions given in Proposition \ref{prop:4.5}, this yields the required conclusion.
\end{proof}

\subsection{Lifting orbifold equivalences to Calabi-Yau completions}

In this subsection we 
	argue that if certain 1-morphisms are isomorphic in $\DGs_k$, then 
the construction of Calabi-Yau completion of a dg algebra is compatible with orbifold equivalences. 

\medskip

For the remainder of this section, we fix an integer~$n$ as well as two homologically smooth and proper dg algebras $A$ and $B$. 
Furthermore, we assume that $X \in \DGsp_k (A,B)$ is ambidextrous and exhibits an orbifold equivalence 
$$
A \sim B \, . 
$$ 

We denote the $n$-Calabi-Yau completions of~$A$ and~$B$ by 
$$
\Acal=\Pi_n(A) 
\, , \quad 
\Bcal=\Pi_n(B) \, , 
$$ 
and we further set\footnote{Note that 
	the 
tensor products in~\eqref{eq:Xcal} are also left derived tensor products since~$\Acal$ and~$\Bcal$ are, respectively, K-projective as a dg $A$-module and K-projective as a dg $B^{\op}$-module.}
\be\label{eq:Xcal}
\Xcal = X \otimes_A \Acal
\, , \quad 
\Xcal' = \Bcal \otimes_B X\, 
\qquad 
{}^{\dagger}\! \Xcal = \Acal \otimes_A {}^{\dagger}\! X
\, , \quad 
\Xcal'^{\dagger}= X^{\dagger} \otimes_B \Bcal \, .
\ee
Viewed as perfect dg $A^{\op} \otimes_k B$-modules, we note that there is a natural isomorphism $\varphi\colon \Xcal \to \Xcal'$.  
To see this, let us look at each of the summands of $\Acal$ and $\Bcal$ separately: First, we have natural isomorphisms
$$
X \otimes_A A \cong X \cong B \otimes_B X \, .
$$
On the other hand, by repeated application of Proposition \ref{prop:5.4} and recalling that $\theta_A=\Theta_A[n-1]$ and $\theta_B=\Theta_B[n-1]$, we find that for each $i \geqslant 1$ there is a natural isomorphism
$$
X \otimes_A \theta_A^{\otimes_A i} \cong \theta_B^{\otimes_B i} \otimes_B X \, , 
$$
hence $\Xcal \cong \Xcal'$ as dg $A^{\op} \otimes_k B$-modules. 
Similarly, we have ${}^{\dagger}\! \Xcal \cong \Xcal'^{\dagger}$ as dg $B^{\op} \otimes_k A$-modules. 

If $\Xcal$ and $\Xcal'$ satisfy the stronger condition of being isomorphic as dg $\Acal^{\op} \otimes_k \Bcal$-modules, and similarly for ${}^{\dagger}\! \Xcal$ and $\Xcal'^{\dagger}$, we will show that~$\Xcal$ exhibits an orbifold equivalence between~$\Acal$ and~$\Bcal$. 
We collect all our assumptions on the orbifold equivalence $A\sim B$ that we make throughout this section: 
\begin{assumption} 
\label{assump}
Let $X \in \DGsp_k (A,B)$ be ambidextrous and exhibit an orbifold equivalence $A\sim B$. 
Moreover, assume that there are isomorphisms $\Xcal \cong \Xcal'$ and ${}^{\dagger}\! \Xcal \cong \Xcal'^{\dagger}$ 
in $\DGs_k$. 
\end{assumption}

	The validity of Assumption \ref{assump} essentially depends on the possibility of lifting the isomorphism $\Xcal \cong \Xcal'$ as dg $A^{\op} \otimes_k B$-modules to an isomorphism $\Xcal \cong \Xcal'$ as dg $\Acal^{\op} \otimes_k \Bcal$-modules. One way one might tackle this problem is to consider $\Acal$ (respectively, $\Bcal$) as a dg $A$-ring (respectively, dg $B$-ring) in the sense of \cite{ww1412.4229} and think of the natural isomorphism $\varphi \colon \Xcal = X \otimes_A \Acal \to \Bcal \otimes_B X = \Xcal'$ as an `entwining structure' in the sense of \cite{BM98}. If this indeed can be done, then the required conditions on $\varphi$ will turn $ \Xcal'$ into a dg $\Acal^{\op} \otimes_k \Bcal$-module, and the same will be true for $\Xcal$ by considering the quasi-inverse of $\varphi$. What is more, $\varphi$ can be promoted to an isomorphism of dg $\Acal^{\op} \otimes_k \Bcal$-modules, as wished. 
	(Another way one might be able to address this problem is by means of homotopy transfer along $\varphi$. The problem here is that, \textsl{a priori}, it is not at all clear that the dg $\Bcal$-module structure on $\Xcal$ one gets by using this technique is compatible with its natural dg $\Acal^{\op}$-module structure, and the same can be said of $\Xcal'$. We expect this to work better at the level of $A_{\infty}$-algebras.) 

	We now proceed to (i) explicitly represent the adjunction maps of~$\Xcal$, and (ii) reduce the computation of quantum dimensions of~$\Xcal$ to the case of~$X$. 
	First we show that ${}^{\dagger} \!\Xcal$ and $\Xcal'^{\dagger}$
are respectively the left and right adjoints to $\Xcal$ and $\Xcal'$ in $\DGs_k $, as the notation suggests. 

\begin{lemma}
\label{lem:5.7}
There are canonical isomorphisms
$$
{}^{\dagger} \!\Xcal \cong \RHom_{\Acal^{\op}}(\Xcal,\Acal)
\, , \quad 
\Xcal'^{\dagger} \cong \RHom_{\Bcal}(\Xcal',\Bcal) \, . 
$$
\end{lemma}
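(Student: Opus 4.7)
The plan is to derive both isomorphisms by chaining three formal ingredients already established: Lemma~\ref{lem:5.3}, which identifies ${}^{\dagger}\! X$ and $X^{\dagger}$ with $\RHom_{A^{\op}}(X, A)$ and $\RHom_B(X, B)$; Proposition~\ref{prop:2.4}, which turns such dualising complexes into an internal hom after tensoring; and the derived extension-of-scalars/restriction adjunction along the canonical dg algebra inclusions $A \hookrightarrow \Acal$ and $B \hookrightarrow \Bcal$. No new input is needed beyond these three tools.

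For the first isomorphism I would concatenate
\begin{align*}
{}^{\dagger}\!\Xcal
= \Acal \otimes_A {}^{\dagger}\! X
& \cong \Acal \Lotimes_A \RHom_{A^{\op}}(X, A) \\
& \cong \RHom_{A^{\op}}(X, \Acal) \\
& \cong \RHom_{\Acal^{\op}}(X \Lotimes_A \Acal, \Acal)
= \RHom_{\Acal^{\op}}(\Xcal, \Acal),
\end{align*}
where the first step comes from Lemma~\ref{lem:5.3}, the second is Proposition~\ref{prop:2.4}(i) applied to $M = \Acal$ viewed as a dg $A^{\op}$-module via restriction along $A \hookrightarrow \Acal$, and the third is the derived tensor--hom adjunction for this algebra map. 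I am tacitly using that $\Acal$ is K-projective as a dg $A$-module and as a dg $A^{\op}$-module, so that the underived tensor products involving $\Acal$ coincide with their derived counterparts. For the second isomorphism of the lemma I would argue symmetrically, starting from $\Xcal'^{\dagger} = X^{\dagger} \otimes_B \Bcal \cong \RHom_B(X, B) \Lotimes_B \Bcal$, applying Proposition~\ref{prop:2.4}(ii) to reach $\RHom_B(X, \Bcal)$, and then invoking the tensor--hom adjunction for $B \hookrightarrow \Bcal$ to land at $\RHom_{\Bcal}(\Bcal \Lotimes_B X, \Bcal) = \RHom_{\Bcal}(\Xcal', \Bcal)$.

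The main obstacle is not the chain of isomorphisms itself, each step being a well-known lemma or its direct consequence, but rather the bimodule bookkeeping: one must verify that the composite is equivariant for the full $\Bcal^{\op}\otimes_k \Acal$-structure required to make the identification take place in $\Dbf(\Bcal^{\op}\otimes_k \Acal)$, not merely for the $B^{\op}\otimes_k \Acal$-structure that is manifestly visible from the $\RHom$ expression. Concretely, one checks that the extended right $\Bcal$-action on $\RHom_{\Acal^{\op}}(\Xcal, \Acal)$, induced via the left $\Bcal$-action on $\Xcal$ furnished by Lemma~\ref{lem:5.5}, matches, step by step along the chain, the right $\Bcal$-action that Lemma~\ref{lem:5.6} transports onto ${}^{\dagger}\!\Xcal$. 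This is the only place where the preceding two lemmas are actually needed, and it is the only point in the argument which is not a purely formal manipulation of adjunctions.
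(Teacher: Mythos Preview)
Your proposal is correct and follows essentially the same route as the paper: the paper's proof strings together the isomorphisms $\Acal \otimes_A {}^{\dagger}\! X \cong \Acal \Lotimesl_A \RHom_{A^{\op}}(X,A) \cong \RHom_{A^{\op}}(X,\Acal) \cong \RHom_{A^{\op}}(X,\RHom_{\Acal^{\op}}(\Acal,\Acal)) \cong \RHom_{\Acal^{\op}}(X \Lotimesl_A \Acal, \Acal)$, citing Lemma~\ref{lem:5.3}, Proposition~\ref{prop:2.4}(i), and the adjoint associativity law at exactly the spots you do (your ``derived tensor--hom adjunction'' is precisely their last two steps combined). Your additional remark about checking $\Bcal^{\op}\otimes_k \Acal$-equivariance of the composite is a point the paper leaves implicit.
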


\begin{proof}
We have a sequence of canonical isomorphisms
\begin{align*}
{}^{\dagger} \!\Xcal &= \Acal \otimes_A {}^{\dagger}\! X \\
 &\cong \Acal \Lotimes_A {}^{\dagger}\! X \\
 &\cong \Acal \Lotimes_A \RHom_{A^{\op}}(X,A) \\
 &\cong \RHom_{A^{\op}}(X,\Acal) \\
 &\cong \RHom_{A^{\op}}(X,\RHom_{\Acal^{\op}}(\Acal,\Acal)) \\
 &\cong \RHom_{\Acal^{\op}}(X \Lotimes_A \Acal, \Acal) \\
 &\cong \RHom_{\Acal^{\op}}(X \otimes_A \Acal, \Acal) \\
 &= \RHom_{\Acal^{\op}}(\Xcal,\Acal) \, .
 \end{align*}
 Here the second isomorphism is by virtue of Lemma \ref{lem:5.3}, the third one by Proposition \ref{prop:2.4}(i) and the fifth one is by virtue of the adjoint associativity law. In like manner, by making use of Proposition \ref{prop:2.4}(ii), one establishes the second isomorphism.
 \end{proof}

\begin{lemma}
\label{lem:5.8}
${}^{\dagger} \!\Xcal$ and $\Xcal'^{\dagger}$ are respectively the left and right adjoints to $\Xcal$ and $\Xcal'$ in $\DGs_k $. 
\end{lemma}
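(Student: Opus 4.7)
The plan is to establish each adjunction separately by combining the standard derived tensor-Hom adjunctions with the canonical isomorphisms of Lemmas~\ref{lem:5.5}--\ref{lem:5.7} and Proposition~\ref{prop:2.4}.

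For the right adjoint, since $\Xcal'$ is a perfect dg $\Acal^{\op}\otimes_k \Bcal$-module by Lemma~\ref{lem:5.5}, the derived tensor-Hom adjunction provides $\Xcal'\Lotimes_{\Acal}-\dashv \RHom_{\Bcal}(\Xcal',-)$. Proposition~\ref{prop:2.4}(ii) combined with Lemma~\ref{lem:5.7} then identifies this right adjoint functor with $\Xcal'^{\dagger}\Lotimes_{\Bcal}-$. The unit and counit of the resulting adjunction, evaluated at the identity bimodules $\Acal$ and $\Bcal$, supply the 2-morphisms $\tcoev_{\Xcal'}\colon \Acal \to \Xcal'^{\dagger}\Lotimes_{\Bcal} \Xcal'$ and $\tev_{\Xcal'}\colon \Xcal'\Lotimes_{\Acal} \Xcal'^{\dagger} \to \Bcal$, and the Zorro moves follow from the triangle identities of the tensor-Hom adjunction.

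For the left adjoint the strategy is symmetric: I would view $\dXcal$ as a 1-morphism $\Bcal \to \Acal$ and apply the tensor-Hom adjunction $\dXcal \Lotimes_{\Bcal} - \dashv \RHom_{\Acal}(\dXcal,-)$. The main computational task is to identify $\RHom_{\Acal}(\dXcal,-)$ with $\Xcal\Lotimes_{\Acal} -$. Starting from $\dXcal = \Acal\otimes_A {}^{\dagger}\! X$, the induction-restriction adjunction along $A\hookrightarrow \Acal$ gives $\RHom_{\Acal}(\dXcal, \Acal) \cong \RHom_A({}^{\dagger}\! X, \Acal)$. Applying Proposition~\ref{prop:2.4}(ii) to ${}^{\dagger}\! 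X$ (perfect as a dg $B^{\op}\otimes_k A$-module) one obtains $\RHom_A({}^{\dagger}\! X, \Acal) \cong \RHom_A({}^{\dagger}\! X, A)\Lotimes_A \Acal$, and biduality of perfect modules together with Lemma~\ref{lem:5.3} identifies $\RHom_A({}^{\dagger}\! X, A) \cong X$, so $\RHom_{\Acal}(\dXcal, \Acal) \cong X\otimes_A \Acal = \Xcal$. A second use of Proposition~\ref{prop:2.4}(ii), now for $\dXcal$ itself (perfect by Lemma~\ref{lem:5.6}), upgrades this to a natural isomorphism of functors $\RHom_{\Acal}(\dXcal, -) \cong \Xcal\Lotimes_{\Acal}-$. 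The resulting adjunction $\dXcal \Lotimes_{\Bcal} - \dashv \Xcal\Lotimes_{\Acal} -$ exhibits $\dXcal$ as the left adjoint of $\Xcal$ in the bicategory, with $\ev_{\Xcal}$ and $\coev_{\Xcal}$ coming from the counit and unit.

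The main obstacle I anticipate is bookkeeping: one must consistently distinguish the dg $A$-, $B$-module structures on $X$, ${}^{\dagger}\! X$, $X^{\dagger}$ from the dg $\Acal$-, $\Bcal$-module structures on $\Xcal$, $\Xcal'$, $\dXcal$, $\Xcal'^{\dagger}$ supplied by Lemmas~\ref{lem:5.5} and~\ref{lem:5.6}, and track which algebra every Hom and tensor is taken over. Once the K-projectivity of $\Acal$ and $\Bcal$ as $A$- and $B$-modules is exploited so that the induction-restriction adjunction and Proposition~\ref{prop:2.4} compose coherently, the remainder reduces to a formal chain of canonical isomorphisms.
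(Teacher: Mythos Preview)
Your proposal is correct and follows essentially the same route as the paper: both arguments start from the derived tensor--Hom adjunction, then use Proposition~\ref{prop:2.4} to identify $\RHom_{\Bcal}(\Xcal',-)$ with $\Xcal'^{\dagger}\Lotimesl_{\Bcal}-$ and $\RHom_{\Acal}(\dXcal,-)$ with $\Xcal\Lotimesl_{\Acal}-$. The only minor difference is how the isomorphism $\RHom_{\Acal}(\dXcal,\Acal)\cong\Xcal$ is obtained: you descend to the $A$-level via the induction--restriction adjunction along $A\hookrightarrow\Acal$ and invoke biduality for the perfect $A^{\op}$-module $X$, whereas the paper stays at the $\Acal$-level, using Lemma~\ref{lem:5.7} to write $\dXcal\cong\RHom_{\Acal^{\op}}(\Xcal,\Acal)$ and then applying biduality for the perfect $\Acal^{\op}$-module $\Xcal$ directly.
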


\begin{proof}
To begin with, $({}^{\dagger} \!\Xcal \Lotimesl_{\Bcal}-,\RHom_{\Acal}({}^{\dagger} \!\Xcal,-))$ is a pair of adjoint functors. Combining the fact that $\Xcal$ is perfect as a dg $\Acal^{\op}$-module with Lemma \ref{lem:5.7}, we get 
$$
\Xcal \cong \RHom_{\Acal}(\RHom_{\Acal^{\op}}(\Xcal,\Acal),\Acal)\cong \RHom_{\Acal}({}^{\dagger} \!\Xcal,\Acal) \, .
$$ 
Therefore, by the remark made after Proposition \ref{prop:2.4}, there is an isomorphism of functors 
$$
\Xcal \Lotimes_{\Acal} - \cong \RHom_{\Acal}({}^{\dagger} \!\Xcal,\Acal) \Lotimes_{\Acal} - \cong \RHom_{\Acal}({}^{\dagger} \!\Xcal,\Acal) \, .
$$
Hence we conclude that the functor ${}^{\dagger} \!\Xcal \Lotimesl_{\Bcal}-$ is left adjoint to $\Xcal \Lotimesl_{\Acal} -$, and consequently ${}^{\dagger} \!\Xcal$ is the left adjoint to $\Xcal$ in $\DGs_k $.

For the statement regarding $\Xcal'^{\dagger}$, we consider the pair of adjoint functors $(\Xcal' \Lotimesl_{\Acal}-,\RHom_{\Bcal}(\Xcal',-))$. Using again the remark following Proposition \ref{prop:2.4} and Lemma \ref{lem:5.7}, we have 
$$
\RHom_{\Bcal}(\Xcal',-)\cong \RHom_{\Bcal}(\Xcal',\Bcal)\Lotimes_{\Bcal}-\cong \Xcal'^{\dagger}\Lotimes_{\Bcal}- \, .
$$
It then follows that the functor $\Xcal'^{\dagger}\Lotimes_{\Bcal}-$ is right adjoint to $\Xcal' \Lotimesl_{\Acal}-$, and hence $\Xcal'^{\dagger}$ is the right adjoint to $\Xcal'$ in $\DGs_k $. 
\end{proof}

On the basis of this result, it is convenient to set 
$$
\Xcal^{\dagger} = \RHom_{\Bcal}(\Xcal,\Bcal) \, .
$$
Then, 
	due to Assumption~\ref{assump}, 
there is an isomorphism $\Xcal^{\dagger} \cong \Xcal'^{\dagger}$, so that $\Xcal^{\dagger}$ is right adjoint to $\Xcal$ in $\DGs_k$, we have ${}^{\dagger} \!\Xcal \cong \Xcal^{\dagger}$. 

We summarise our discussion so far as follows: 

\begin{proposition}
\label{prop:Xcaladjoint}
With the notation as above, $\Xcal \in \DGs_k (\Acal,\Bcal)$ is an ambidextrous $1$-morphism where ${}^{\dagger} \!\Xcal$ and $\Xcal^{\dagger}$ are its left and right adjoint, respectively.
\end{proposition}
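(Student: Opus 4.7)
The plan is to assemble the statement directly from the preceding four lemmas, which have done all of the real work; the proposition itself is essentially a bookkeeping exercise. First I would note that by Lemma~\ref{lem:5.5} the module $\Xcal$ carries the structure of a perfect dg $\Acal^{\op}\otimes_k \Bcal$-module (inherited via $\varphi$ from $\Xcal'$), and similarly, by Lemma~\ref{lem:5.6}, both ${}^{\dagger}\!\Xcal$ and $\Xcal'^{\dagger}$ are perfect dg $\Bcal^{\op}\otimes_k \Acal$-modules with a chosen isomorphism between them. Hence all three objects are genuine $1$-morphisms in $\DGs_k$, so it makes sense to discuss their adjunctions in the bicategory.

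Next I would invoke Lemma~\ref{lem:5.8}: ${}^{\dagger}\!\Xcal$ is a left adjoint to $\Xcal$, and $\Xcal'^{\dagger}$ is a right adjoint to $\Xcal'\cong\Xcal$. Setting $\Xcal^{\dagger}=\RHom_{\Bcal}(\Xcal,\Bcal)$ and invoking the isomorphism $\Xcal\cong\Xcal'$ from Lemma~\ref{lem:5.5}, one gets
\[
\Xcal^{\dagger}=\RHom_{\Bcal}(\Xcal,\Bcal)\cong \RHom_{\Bcal}(\Xcal',\Bcal)\cong \Xcal'^{\dagger},
\]
where the last isomorphism is the content of Lemma~\ref{lem:5.7}. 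Transporting the adjunction along this isomorphism, $\Xcal^{\dagger}$ is a right adjoint to $\Xcal$ in $\DGs_k$. Thus $\Xcal$ admits both a left and a right adjoint, which is the first half of the claim.

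Finally, to exhibit ambidexterity I would chain the two isomorphisms produced along the way: Lemma~\ref{lem:5.6} gives ${}^{\dagger}\!\Xcal\cong \Xcal'^{\dagger}$, and the display above gives $\Xcal'^{\dagger}\cong \Xcal^{\dagger}$. Composing yields an isomorphism $\alpha_{\Xcal}\colon {}^{\dagger}\!\Xcal\xrightarrow{\cong}\Xcal^{\dagger}$ of perfect dg $\Bcal^{\op}\otimes_k\Acal$-modules, which is exactly what is required for $\Xcal$ to be ambidextrous in $\DGs_k$.

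The only step that needs any care is making sure that the isomorphisms produced by Lemmas~\ref{lem:5.5}--\ref{lem:5.7} are genuine isomorphisms of bimodules (i.e.\ of $\Acal^{\op}\otimes_k\Bcal$- or $\Bcal^{\op}\otimes_k\Acal$-modules), not just of one-sided modules; but this compatibility was built into the constructions during those lemmas, so here it is only necessary to cite them. There is no new obstacle at this stage — the proposition is essentially a corollary of the preceding lemmas.
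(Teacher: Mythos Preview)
Your proposal is correct and matches the paper's own argument essentially line for line; in fact the paper does not give a separate proof but presents the proposition as a summary of the preceding lemmas, with only the short paragraph defining $\Xcal^{\dagger}=\RHom_{\Bcal}(\Xcal,\Bcal)$ and noting $\Xcal^{\dagger}\cong\Xcal'^{\dagger}\cong{}^{\dagger}\!\Xcal$ doing the residual work. Your write-up is, if anything, slightly more explicit about the bookkeeping than the paper itself.
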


In light of the above canonical isomorphisms let us from hereon identify 
$$
\dXcal = \RHom_{\Acal^{\op}} (\Xcal, \Acal) 
\, , \quad 
\Xcal^\dagger = \RHom_\Bcal(\Xcal,\Bcal) \, . 
$$
The (co)evaluation maps exhibiting $\dXcal, \Xcal^\dagger$ as adjoints of~$\Xcal$ are determined by the adjoint pairs $(-\Lotimesl_\Bcal \Xcal, \RHom_{\Acal^{\op}}(\Xcal,-))$ and $(\Xcal \Lotimesl_\Acal-,\RHom_\Bcal (\Xcal,-))$. 
As in the previous case of adjoints of~$X$, we shall now represent the adjunction maps for~$\Xcal$ with the help of a K-projective resolution $\pi: P \rightarrow X$. With this we set
\begin{align*}
\Pcal & = P \otimes_A \Acal 
\, , \quad 
\Pcal' = \Bcal \otimes_B P 
\, , 
\\
\dPcal & = \Hom_{\Acal^{\op}}(\Pcal, \Acal) 
\, , \quad
\Pcal'^\dagger = \Hom_\Bcal(\Pcal', \Bcal) 
\, , \quad 
\Pcal^\dagger = \Hom_\Bcal (\Pcal, \Bcal) 
\end{align*}
and note that 
	Assumption~\ref{assump} 
and Proposition~\ref{prop:Xcaladjoint} provide us with canonical isomorphisms
\begin{align*}
\varphi: \Pcal = P \otimes_A \Acal & \xlongrightarrow{\cong} \Bcal \otimes_B P = \Pcal' \, , 
\\
\psi: \Acal \otimes_A \dP & \xlongrightarrow{\cong} \dP \otimes_B \Bcal \, ,
\\
\widetilde\psi : \Acal \otimes_A P^\dagger & \xlongrightarrow{\cong} P^\dagger \otimes_B \Bcal \, . 
\end{align*}
Furthermore, we will employ the following canonical isomorphisms: 
$$
\begin{aligned}[c]
\beta :  \Acal \otimes_A \dP & \xlongrightarrow{\cong} \Hom_{A^{\op}}(P,\Acal) \, ,\\
u \otimes f & \longmapsto \big( p \mapsto u f(p) \big) \, ,
\end{aligned}
\qquad
\begin{aligned}[c]
\widetilde\beta :  P^\dagger \otimes_B \Bcal & \xlongrightarrow{\cong} \Hom_B(P,\Bcal) \, ,\\
g \otimes v & \longmapsto \big( p \mapsto (-1)^{|p| |v|}g(p) v \big) \, ,
\end{aligned}
$$
and 
$$
\begin{aligned}[c]
&\gamma :  \Hom_{A^{\op}}(P,\Acal) \xlongrightarrow{\cong} \dPcal \, ,\\
&F  \longmapsto \big( p \otimes u \mapsto F(p) u \big) \, ,
\end{aligned}
\qquad
\begin{aligned}[c]
&\widetilde\gamma :  \Hom_B(P,\Bcal) \xlongrightarrow{\cong} \Pcal'^\dagger \, ,\\
&G  \longmapsto \big( p \otimes u \mapsto (-1)^{|G| |v|} v G(p) \big) \, .
\end{aligned}
$$

Now we can state the analogue of Proposition~\ref{prop:4.5} for~$\Xcal$. 

\begin{proposition}
In the above notation the adjunction maps for~$\Xcal$ are represented in $\Kbf(\Acal^{\op}\otimes_k \Acal)$ and $\Kbf(\Bcal^{\op}\otimes_k \Bcal)$ by the maps 
$$
\begin{aligned}[c]
\varepsilon_\Pcal&\colon {}^{\dagger}\! \Pcal \otimes_\Bcal \Pcal \lto \Acal \, ,\\
\widetilde{\varepsilon}_\Pcal&\colon \Pcal \otimes_\Acal \Pcal^{\dagger} \lto \Bcal \, ,
\end{aligned}
\qquad
\begin{aligned}[c]
\eta_\Pcal &\colon \Bcal \lto \Pcal \otimes_\Acal {}^{\dagger}\! \Pcal \, ,\\
\widetilde{\eta}_\Pcal &\colon \Acal \lto \Pcal^{\dagger} \otimes_\Bcal \Pcal \\
\end{aligned}
$$
given by
$$
\begin{aligned}[c]
\varepsilon_\Pcal(F \otimes q)&= F(q) \, , \vphantom{\sum_i }
\\
\widetilde{\varepsilon}_\Pcal(q \otimes G)&= (-1)^{\vert q\vert \vert G\vert} G(q) \, , \vphantom{\sum_i }
\end{aligned}
\qquad
\begin{aligned}[c]
\eta_\Pcal (v)&= \sum_i ( x_i \otimes e_\Acal ) \otimes \gamma\beta ( e_\Acal \otimes {^\dagger x_i} ) v  \, ,\\
\widetilde{\eta}_\Pcal (u) &=\sum_i \Big( \widetilde\gamma \widetilde\beta (y_i^\dagger \otimes e_\Bcal) \circ \varphi \Big) \otimes (y_i \otimes e_\Acal) u \\
\end{aligned}
$$
where $e_\Acal$ and $e_\Bcal$ are the unit elements of~$\Acal$ and~$\Bcal$, respectively. 
\end{proposition}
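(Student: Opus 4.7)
The plan is to imitate the proof of Proposition~\ref{prop:4.5} in the new setting, using $\Pcal = P \otimes_A \Acal$ as a K-projective resolution of $\Xcal$ in $\Dbf(\Acal^{\op} \otimes_k \Bcal)$ (where the dg $\Bcal$-module structure on $\Pcal$ is transported from $\Pcal'$ via $\varphi$, exactly as in the proof of Lemma~\ref{lem:5.5}). Since $A$ is K-projective over itself and $\Acal$ is K-projective as a dg $A$-module, $\Pcal$ is K-projective both as a dg $\Acal^{\op}$-module and, via $\varphi$, as a dg $\Bcal$-module, so we may apply the recipe of Section~\ref{subsect:2.1} and the discussion preceding Proposition~\ref{prop:4.5}.

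First I would dispatch the evaluation maps. By Lemma~\ref{lem:5.8} and the identifications $\dXcal = \RHom_{\Acal^{\op}}(\Xcal,\Acal)$ and $\Xcal^\dagger = \RHom_\Bcal(\Xcal,\Bcal)$, the maps $\ev_\Xcal$ and $\tev_\Xcal$ are the counits of the two adjunctions, and the same argument used in \cite[Sect.\,4.2]{Pauksztello08} applied to the K-projective resolution $\Pcal$ shows that they are represented by the honest evaluation maps $\varepsilon_\Pcal(F\otimes q) = F(q)$ and $\widetilde\varepsilon_\Pcal(q\otimes G) = (-1)^{|q||G|}G(q)$. This part is essentially formal.

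The substantive step is the coevaluation maps. As in the proof of Proposition~\ref{prop:4.5}, $\coev_\Xcal$ and $\tcoev_\Xcal$ are obtained as $\eta_\Pcal = \nu_\Pcal^{-1}\circ(\lambda_\Pcal\circ-)\circ\delta_\Pcal$ and $\widetilde\eta_\Pcal = \widetilde\nu_\Pcal^{-1}\circ(\rho_\Pcal\circ-)\circ\widetilde\delta_\Pcal$, where $\nu_\Pcal \colon \Pcal \otimes_\Acal \dPcal \to \Hom_{\Acal^{\op}}(\Pcal,\Pcal)$ and $\widetilde\nu_\Pcal \colon \Pcal^\dagger \otimes_\Bcal \Pcal \to \Hom_\Bcal(\Pcal,\Pcal)$ are the analogues of~\eqref{eq:numap}. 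Evaluating at $e_\Bcal$ and $e_\Acal$ reduces everything to computing the Casimir elements $\nu_\Pcal^{-1}(1_\Pcal)$ and $\widetilde\nu_\Pcal^{-1}(1_\Pcal)$. Here I would use the compatibility squares relating $\nu_\Pcal, \widetilde\nu_\Pcal$ to the original $\nu_P, \widetilde\nu_P$ through the chain of isomorphisms $\beta, \gamma, \widetilde\beta, \widetilde\gamma$ (which express $\dPcal$ and $\Pcal^\dagger$ in terms of $\dP\otimes_B\Bcal \cong \Acal\otimes_A\dP$ and $\Acal\otimes_A P^\dagger \cong P^\dagger\otimes_B\Bcal$) together with $\varphi$ to rewrite the $\Bcal$-module structure on $\Pcal$. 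Transporting the Casimir elements $\sum_i x_i\otimes{}^\dagger x_i \in P\otimes_A\dP$ and $\sum_i y_i^\dagger\otimes y_i \in P^\dagger\otimes_B P$ along these isomorphisms yields precisely the expressions in the statement; the signs $(-1)^{|p||v|}$ built into $\widetilde\beta$ and $\widetilde\gamma$ recover the Koszul signs.

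The main obstacle will be step three: carefully bookkeeping the identifications. In particular one must verify that $\gamma\beta \colon \Acal\otimes_A\dP \to \dPcal$ intertwines the right $\Acal$-actions correctly so that $\eta_\Pcal$ is $\Bcal^{\op}\otimes_k\Bcal$-linear (which forces the factor $\gamma\beta(e_\Acal\otimes{}^\dagger x_i)v$ on the right), and dually that $\widetilde\gamma\widetilde\beta \circ \varphi$ is the correct map turning $\widetilde\eta_\Pcal$ into an $\Acal^{\op}\otimes_k\Acal$-linear map — the $\varphi$ appears because the dg $\Bcal$-action used in forming $\Pcal^\dagger = \Hom_\Bcal(\Pcal,\Bcal)$ is the one transported from $\Pcal'$. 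Once these linearity checks are in place, the values at $e_\Bcal$ and $e_\Acal$ determine $\eta_\Pcal$ and $\widetilde\eta_\Pcal$ globally, and the stated formulas follow.
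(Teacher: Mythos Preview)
Your proposal is correct and follows essentially the same approach as the paper. The paper likewise dispatches the evaluations as formal, then obtains $\eta_\Pcal$ and $\widetilde\eta_\Pcal$ by writing them as explicit composites built from $\eta_P$, $\widetilde\eta_P$ and the isomorphisms $\beta,\gamma,\widetilde\beta,\widetilde\gamma,\varphi$, and finally checks directly that $\nu_\Pcal$ (resp.\ $\widetilde\nu_\Pcal$) applied to the resulting element returns $1_\Pcal$; this is exactly your ``transport the Casimir'' computation read in the other direction.
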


\begin{proof}
It is clear that $\varepsilon_\Pcal, \widetilde\varepsilon_\Pcal$ represent the evaluation maps for~$\Xcal$, so we only have to consider $\eta_\Pcal, \widetilde\eta_\Pcal$. 
As an aside we observe that it follows from their expressions that the Casimir elements for~$\Pcal$ are $\sum_i ( x_i \otimes e_\Acal ) \otimes \gamma\beta ( e_\Acal \otimes {^\dagger x_i} )$ and $\sum_i ( \widetilde\gamma \widetilde\beta (y_i^\dagger \otimes e_\Bcal) \circ \varphi ) \otimes (y_i \otimes e_\Acal)$. 

We first examine the map $\eta_\Pcal: \Bcal \rightarrow \Pcal \otimes_\Acal {}^{\dagger}\! \Pcal $ in detail. 
It is induced by $\eta_P$ via a sequence of canonical isomorphisms:  
$$
\begin{tikzpicture}[
			     baseline=(current bounding box.base), 
			     >=stealth,
			     descr/.style={fill=white,inner sep=3.5pt}, 
			     normal line/.style={->}
			     ] 
\matrix (m) [matrix of math nodes, row sep=0.6em, column sep=2.5em, text height=1.5ex, text depth=0.9ex] {%
\Bcal && \Bcal \otimes_B B 
\\
\phantom{\B} && \Bcal \otimes_B P \otimes_A \dP
\\
\phantom{\B} && P \otimes_A \Acal \otimes_A \dP 
\\
\phantom{\B} && P \otimes_A \Hom_{A^{\op}}(P,\Acal) 
\\
\phantom{\B} && P \otimes_A \dPcal 
\\
\phantom{\B} && \Pcal \otimes_\Acal \dPcal \, . 
\\
};
\path[font=\small] (m-1-1) edge[->] node[above] {$ \cong $} (m-1-3);
\path[font=\small] (m-2-1) edge[->] node[above] {$ 1_\Bcal\otimes\eta_P $} (m-2-3);
\path[font=\small] (m-3-1) edge[->] node[above] {$ \varphi^{-1} \otimes 1_{\dP} $} (m-3-3);
\path[font=\small] (m-4-1) edge[->] node[above] {$ 1_P \otimes\beta $} (m-4-3);
\path[font=\small] (m-5-1) edge[->] node[above] {$ 1_P \otimes \gamma $} (m-5-3);
\path[font=\small] (m-6-1) edge[->] node[above] {$ \cong $} (m-6-3);
\end{tikzpicture}
$$
Note that to arrive from here at the expression for $\eta_\Pcal$ in the statement of the proposition one uses $\varphi(p\otimes e_\Acal) = e_\Bcal \otimes p$, which follows from the fact that~$\varphi$ is a bimodule map. 

To prove that $\eta_\Pcal$ really represents the coevaluation we have to verify that it is the preimage of the identity under the isomorphism 
\begin{align*}
\nu_\Pcal \colon \Pcal \otimes_\Acal \dPcal & \lra \Hom_{\Acal^{\op}}(\Pcal, \Pcal) \, ,
\\
q \otimes F & \longmapsto \big( q' \mapsto q F(q') \big) 
\end{align*}
as in~\eqref{eq:numap}. This is straightforward: 
\begin{align*}
\nu_\Pcal &\Big( \sum_i (x_i \otimes e_\Acal ) \otimes \gamma\beta (e_\Acal \otimes {}^\dagger x_i) \Big)(p \otimes u)
\\
& = \sum_i (x_i \otimes e_\Acal) \big( \gamma\beta (e_\Acal \otimes {}^\dagger x_i) \big) (p \otimes u)
\\
& = \sum_i (x_i \otimes e_\Acal) \big( \beta (e_\Acal \otimes {}^\dagger x_i) \big) (p)u
\\
& = \sum_i (x_i \otimes e_\Acal) {}^\dagger x_i(p) u
\\
& = \sum_i x_i \otimes {}^\dagger x_i(p) u
\\
& = \Big( \sum_i x_i  {}^\dagger x_i(p) \Big) \otimes u
\\
& = p \otimes u
\end{align*}
where in the third last step we used that $e_\Acal$ is actually the unit element of~$A$, and in the last step we used~\eqref{eq:Casimirbasis}. 

Similarly, the map $\widetilde\eta_\Pcal$ is given by the sequence of maps 
$$
\begin{tikzpicture}[
			     baseline=(current bounding box.base), 
			     >=stealth,
			     descr/.style={fill=white,inner sep=3.5pt}, 
			     normal line/.style={->}
			     ] 
\matrix (m) [matrix of math nodes, row sep=0.6em, column sep=2.5em, text height=1.5ex, text depth=0.9ex] {%
\Acal && \Acal \otimes_A A 
\\
\phantom{\B} && \Acal \otimes_A P^\dagger \otimes_B P
\\
\phantom{\B} && P^\dagger \otimes_B \Bcal \otimes_B P 
\\
\phantom{\B} && \Hom_{B}(P,\Bcal) \otimes_B P
\\
\phantom{\B} && \Pcal'^\dagger \otimes_B P
\\
\phantom{\B} && \Pcal'^\dagger \otimes_{\Bcal} \Pcal'
\\
\phantom{\B} && \Pcal^\dagger \otimes_{\Bcal} \Pcal \, . 
\\
};
\path[font=\small] (m-1-1) edge[->] node[above] {$ \cong $} (m-1-3);
\path[font=\small] (m-2-1) edge[->] node[above] {$ 1_\Acal \otimes \widetilde\eta_P $} (m-2-3);
\path[font=\small] (m-3-1) edge[->] node[above] {$ \widetilde\psi \otimes 1_{P} $} (m-3-3);
\path[font=\small] (m-4-1) edge[->] node[above] {$ \widetilde\beta \otimes 1_P $} (m-4-3);
\path[font=\small] (m-5-1) edge[->] node[above] {$ \widetilde\gamma \otimes 1_P $} (m-5-3);
\path[font=\small] (m-6-1) edge[->] node[above] {$ \cong $} (m-6-3);
\path[font=\small] (m-7-1) edge[->] node[above] {$ ( - \circ \varphi ) \otimes \varphi^{-1} $} (m-7-3);
\end{tikzpicture}
$$
A computation analogous to the one for $\eta_\Pcal$ shows that $\widetilde\eta_\Pcal$ is the preimage of the identity under the isomorphism
\begin{align*}
\widetilde\nu_\Pcal \colon \Pcal^\dagger \otimes_\Bcal \Pcal & \lra \Hom_{\Bcal}(\Pcal, \Pcal) \, ,
\\
F \otimes q & \longmapsto \big( q' \mapsto (-1)^{|q| |q'|} F(q') q \big) \, .
\end{align*}
This completes the proof. 
\end{proof}

With its adjunctions now under explicit control we can proceed to compute the quantum dimensions of $\Xcal \in \DGs_k(\Acal,\Bcal)$. Recall that in Proposition~\ref{prop:dimP} we computed the quantum dimensions of $X \in \DGsp_k(A,B)$ as those of its K-projective resolution. It turns out that the case of~$\Xcal$ can basically be reduced to that result: 

\begin{proposition}
Let $X \in \DGsp_k (A,B)$ be ambidextrous, let $\pi\colon P \to X$ be a K-projective resolution and set $\Pcal = P \otimes_A \Acal$ as above. 
Then the quantum dimensions of $\Xcal = X \otimes_A \Acal$ are represented by the maps $\diml(\Pcal) \in \End_{\Kbf(\Acal^{\op}\otimes_k \Acal)}(\Acal)$ and $\dimr(\Pcal) \in \End_{\Kbf(\Bcal^{\op}\otimes_k \Bcal)}(\Bcal)$ given by
\begin{align*}
\diml(\Pcal) : u & \longmapsto \big[\! \diml(P) \big](e_A) \cdot u \, , 
\\
\dimr(\Pcal) : v & \longmapsto \big[\! \dimr(P) \big](e_B) \cdot v \, .
\end{align*}
\end{proposition}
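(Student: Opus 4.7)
The plan is to imitate Proposition~\ref{prop:dimP} applied to the K-projective module $\Pcal$, and then unpack all the canonical isomorphisms so that the computation reduces to the analogous one for~$X$. I shall freely use the notation of the preceding proposition, which already represents the adjunction maps of $\Xcal$ in terms of those of~$X$.

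First, I would fix a representative $\alpha_\Pcal \colon \dPcal \to \Pcal^\dagger$ of the chosen isomorphism $\alpha_\Xcal \colon \dXcal \to \Xcal^\dagger$ in $\Kbf(\Bcal^{\op} \otimes_k \Bcal)$. The natural candidate, built from $\alpha_P$ and the canonical isomorphisms $\beta, \gamma, \widetilde\beta, \widetilde\gamma, \widetilde\psi$ together with the isomorphism $\Pcal^\dagger \cong \Pcal'^\dagger$ sending $G \mapsto G \circ \varphi$, is the composition
$$\alpha_\Pcal = (-\circ\varphi) \circ \widetilde\gamma \circ \widetilde\beta \circ \widetilde\psi \circ (1_\Acal \otimes \alpha_P) \circ \beta^{-1} \circ \gamma^{-1}.$$
A short diagram chase using the functoriality of the maps involved, together with the construction of $\varphi$ in Lemma~\ref{lem:5.5} (in particular $\varphi(p \otimes e_\Acal) = e_\Bcal \otimes p$), confirms that this represents $\alpha_\Xcal$ in the homotopy category.

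Second, I would apply the formulas of Proposition~\ref{prop:dimP} mutatis mutandis to $\Pcal \in \DGs_k(\Acal, \Bcal)$. The Casimir elements for $\Pcal$ can be read off directly from the expressions for $\eta_\Pcal$ and $\widetilde\eta_\Pcal$ in the previous proposition, namely
$$\sum_i (x_i \otimes e_\Acal) \otimes \gamma\beta(e_\Acal \otimes {}^\dagger x_i) \quad \text{and} \quad \sum_i \bigl(\widetilde\gamma\widetilde\beta(y_i^\dagger \otimes e_\Bcal) \circ \varphi\bigr) \otimes (y_i \otimes e_\Acal).$$
Substituting these into the general expressions from Proposition~\ref{prop:dimP} yields left multiplication in $\Acal$ and in $\Bcal$ by certain explicit elements.

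Third, to collapse the computation back to~$X$: the key observation is that $\beta, \gamma, \widetilde\beta, \widetilde\gamma, \widetilde\psi$ and $(-\circ\varphi)$ all restrict to the identity along the unit factors $e_A, e_B, e_\Acal, e_\Bcal$. Hence one checks by direct substitution that
$$\alpha_\Pcal^{-1}\bigl(\widetilde\gamma\widetilde\beta(y_i^\dagger \otimes e_\Bcal) \circ \varphi\bigr) = \gamma\beta\bigl(e_\Acal \otimes \alpha_P^{-1}(y_i^\dagger)\bigr),$$
and analogously for $\alpha_\Pcal$ applied to $\gamma\beta(e_\Acal \otimes {}^\dagger x_i)$. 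Evaluating these at $y_i \otimes e_\Acal$ and $x_i \otimes e_\Acal$ produces precisely $[\alpha_P^{-1}(y_i^\dagger)](y_i)$ and $(-1)^{|{}^\dagger x_i||x_i|} [\alpha_P({}^\dagger x_i)](x_i)$. Summing over~$i$ and recognising the results via Proposition~\ref{prop:dimP} as $[\diml(P)](e_A) \in A \subset \Acal$ and $[\dimr(P)](e_B) \in B \subset \Bcal$ gives the claimed formulas for $\diml(\Pcal)$ and $\dimr(\Pcal)$.

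The main obstacle will be the sign and bracketing bookkeeping, particularly in verifying that the lift of $\alpha_P$ above really represents $\alpha_\Xcal$ compatibly with the coevaluations as stated; once this calibration is in place, the reduction is a mechanical unwinding of the explicit formulas already assembled in the previous proposition.
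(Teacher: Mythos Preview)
Your proposal is correct and follows essentially the same approach as the paper: define $\alpha_\Pcal$ as the composite of $\alpha_P$ with the canonical isomorphisms $\beta,\gamma,\widetilde\beta,\widetilde\gamma,\varphi$ and the $\psi$-type map, then unwind the composition $\varepsilon_\Pcal \circ (\alpha_\Pcal^{-1}\otimes 1_\Pcal)\circ \widetilde\eta_\Pcal$ step by step until only $[\alpha_P^{-1}(y_i^\dagger)](y_i)$ survives. The only cosmetic difference is that the paper orders the middle of $\alpha_\Pcal$ as $(\alpha_P\otimes 1_\Bcal)\circ\psi$ rather than your $\widetilde\psi\circ(1_\Acal\otimes\alpha_P)$, but these agree by the naturality built into Lemma~\ref{lem:5.2}; the paper also writes out the definition of $\diml(\Pcal)$ directly instead of citing Proposition~\ref{prop:dimP} (which is stated for $\DGsp_k$, whereas $\Acal,\Bcal$ need not be proper), so your ``mutatis mutandis'' really just means re-using the compositional formula together with the adjunction maps already established in the preceding proposition.
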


\begin{proof}
The quantum dimensions of~$\Pcal$ are by definition
$$
\diml(\Pcal)=\varepsilon_\Pcal \circ (\alpha_\Pcal^{-1} \otimes 1_\Pcal) \circ \widetilde{\eta}_\Pcal
 \, , \quad
\dimr(\Pcal)=\widetilde{\varepsilon}_\Pcal \circ (1_\Pcal \otimes \alpha_\Pcal) \circ \eta_\Pcal 
$$
where $\alpha_\Pcal: \dPcal \rightarrow \Pcal^\dagger$ is the isomorphism induced by $\alpha_P: \dP \rightarrow P^\dagger$: 
$$
\begin{tikzpicture}[
			     baseline=(current bounding box.base), 
			     >=stealth,
			     descr/.style={fill=white,inner sep=3.5pt}, 
			     normal line/.style={->}
			     ] 
\matrix (m) [matrix of math nodes, row sep=0.6em, column sep=2.5em, text height=1.5ex, text depth=0.9ex] {%
\dPcal && \Hom_{A^{\op}}(P, \Acal) 
\\
\phantom{\dPcal} && \Acal \otimes_A \dP 
\\
\phantom{\dPcal} && \dP \otimes \Bcal
\\
\phantom{\dPcal} && P^\dagger \otimes_B \Bcal
\\
\phantom{\dPcal} && \Hom_B(P,\Bcal) 
\\
\phantom{\dPcal} && \Pcal'^\dagger
\\
\phantom{\dPcal} && \Pcal^\dagger \, . 
\\
};
\path[font=\small] (m-1-1) edge[->] node[above] {$ \gamma^{-1} $} (m-1-3);
\path[font=\small] (m-2-1) edge[->] node[above] {$ \beta^{-1} $} (m-2-3);
\path[font=\small] (m-3-1) edge[->] node[above] {$ \psi $} (m-3-3);
\path[font=\small] (m-4-1) edge[->] node[above] {$ \alpha_P \otimes 1_\Bcal $} (m-4-3);
\path[font=\small] (m-5-1) edge[->] node[above] {$ \widetilde\beta $} (m-5-3);
\path[font=\small] (m-6-1) edge[->] node[above] {$ \widetilde\gamma $} (m-6-3);
\path[font=\small] (m-7-1) edge[->] node[above] {$ -\circ\varphi $} (m-7-3);
\end{tikzpicture}
$$
Hence we have 
\begin{align*}
\alpha_\Pcal & = (-\circ\varphi) \circ \widetilde\gamma \circ \widetilde\beta \circ (\alpha_P \otimes 1_\Bcal) \circ \psi \circ \beta^{-1} \circ \gamma^{-1} \, , 
\\
\alpha_\Pcal^{-1} & = \gamma \circ \beta \circ \psi^{-1} \circ (\alpha_P^{-1} \otimes 1_\Bcal) \circ \widetilde\beta^{-1} \circ \widetilde\gamma^{-1} \circ (- \circ \varphi^{-1})
\end{align*}
and we can compute
\begin{align*}
\big[\! \diml(P) \big](u) 
& = \varepsilon_\Pcal \Big( (\alpha_\Pcal^{-1} \otimes 1_\Pcal ) \big( \widetilde\eta_\Pcal(u) \big) \Big)
\\
& = \sum_i \varepsilon_\Pcal \Big( (\alpha_\Pcal^{-1} \otimes 1_\Pcal ) \Big(\big( \widetilde\gamma\widetilde\beta (y_i \otimes e_\Bcal) \circ \varphi \big) \otimes (y_i \otimes e_\Acal) u \Big)\Big)
\\
& = \sum_i \varepsilon_\Pcal \Big( \alpha_\Pcal^{-1} \big( \widetilde\gamma\widetilde\beta (y_i \otimes e_\Bcal) \circ \varphi \big) \otimes (y_i \otimes e_\Acal) \Big) \cdot u
\\
& = \sum_i \varepsilon_\Pcal \Big( \gamma\beta \big( e_\Acal \otimes \alpha_P^{-1}(y_i^\dagger) \big) \otimes (y_i \otimes e_\Acal) \Big) \cdot u
\\
& = \sum_i  \Big[ \gamma\beta \big( e_\Acal \otimes \alpha_P^{-1}(y_i^\dagger) \big)  \Big] (y_i \otimes e_\Acal) \cdot u
\\
& = \sum_i  \Big[ \beta \big( e_\Acal \otimes \alpha_P^{-1}(y_i^\dagger) \big)  \Big] (y_i) \cdot u
\\
& = \sum_i  \big[ \alpha_P^{-1}(y_i^\dagger) \big] (y_i) \cdot u
\\ 
& = \big[\! \diml(P) \big](e_A) \cdot u \, . 
\end{align*}
The computation of $\dimr(\Pcal)$ is analogous. 
\end{proof}

As an immediate corollary we find that if the quantum dimensions of~$X$ are invertible, then those of~$\Xcal$ are invertible too. 

\begin{theorem}
\label{thm:maintheorem}
Let $X \in \DGsp_k(A,B)$ exhibit an 
	orbifold equivalence $A\sim B$ as in Assumption~\ref{assump}. 
Then $\Xcal = X \otimes_A \Acal \in \DGs_k(\Acal,\Bcal)$ exhibits an orbifold equivalence $\Acal \sim \Bcal$ between the $n$-Calabi-Yau completions $\Acal = \Pi_n(A) $ and $\Bcal = \Pi_n(B)$. 
\end{theorem}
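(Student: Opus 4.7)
The plan is essentially to harvest the preceding machinery: all the hard work has been spent showing that $\Xcal$ is a $1$-morphism in $\DGs_k(\Acal,\Bcal)$, that it is ambidextrous with $\dXcal \cong \Xcal^\dagger$ explicitly describable via K-projective resolutions, and that its adjunction maps can be written down in closed form from those of~$X$ together with the canonical isomorphisms $\varphi,\psi,\widetilde\psi,\beta,\widetilde\beta,\gamma,\widetilde\gamma$.

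First I would invoke Proposition~\ref{prop:Xcaladjoint} to conclude that $\Xcal$ is ambidextrous in $\DGs_k$, so that its left and right quantum dimensions $\diml(\Xcal) \in \End_{\DGs_k}(I_\Acal)$ and $\dimr(\Xcal)\in \End_{\DGs_k}(I_\Bcal)$ are well-defined elements. Next I would appeal to the final proposition before the theorem, which represents these quantum dimensions at the level of K-projective resolutions as left multiplication by
\[
\big[\diml(P)\big](e_A) \in \Acal
\quad\text{and}\quad
\big[\dimr(P)\big](e_B) \in \Bcal \, .
\]
By Proposition~\ref{prop:dimP} these distinguished elements are precisely representatives (via the canonical inclusions $A \hookrightarrow \Acal$ and $B \hookrightarrow \Bcal$) of the quantum dimensions $\diml(X) \in \End_{\DGsp_k}(I_A)$ and $\dimr(X) \in \End_{\DGsp_k}(I_B)$.

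Now the hypothesis that $X$ exhibits an orbifold equivalence $A\sim B$ says exactly that $\diml(X)$ and $\dimr(X)$ are isomorphisms, i.\,e.~that the elements $\big[\diml(P)\big](e_A)$ and $\big[\dimr(P)\big](e_B)$ act invertibly by left multiplication on~$A$ and~$B$ respectively. Viewed inside the $n$-Calabi-Yau completions, these same elements sit in $A \subset \Acal$ and $B \subset \Bcal$ as degree-zero units. Since the canonical maps $A \to \Acal$ and $B \to \Bcal$ are maps of dg algebras sending units to units, invertibility in~$A$ (resp.~$B$) is preserved, and hence left multiplication by these elements remains an isomorphism on~$\Acal$ (resp.~$\Bcal$).

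It follows that $\diml(\Xcal)$ and $\dimr(\Xcal)$ are isomorphisms in the respective endomorphism algebras. Therefore $\Xcal \in \DGs_k(\Acal,\Bcal)$ is an ambidextrous $1$-morphism with invertible quantum dimensions, and by the defining property of orbifold equivalence it exhibits an equivalence $\Acal \sim \Bcal$, as required. No genuine obstacle remains at this stage; the conceptual weight of the argument lies entirely in Lemmas~\ref{lem:5.5}--\ref{lem:5.8} and the computation of the adjunction and dimension maps in terms of K-projective resolutions, which have already been carried out. The only care one must take is bookkeeping: ensuring that the invertibility statement is really being transported through the natural inclusions and not just asserted by abuse of notation, but this is immediate from the fact that $A \to \Acal$ and $B \to \Bcal$ are unital maps of dg algebras.
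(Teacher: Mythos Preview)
Your proposal is correct and follows essentially the same approach as the paper: the theorem is stated as an immediate corollary of the preceding proposition computing $\diml(\Pcal)$ and $\dimr(\Pcal)$ in terms of $\diml(P)$ and $\dimr(P)$, and you have simply spelled out the deduction in more detail than the paper does. The only minor imprecision is your phrasing about ``invertibility in~$A$ being preserved'': strictly speaking, $\diml(P)$ is an isomorphism in the derived category rather than multiplication by a unit of the ring~$A$, but since $\Acal$ is K-projective as a dg $A$-module the map $\diml(\Pcal) \cong \diml(P)\otimes_A 1_{\Acal}$ remains a quasi-isomorphism, which is all that is needed.
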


\subsection{Dynkin quivers and Ginzburg algebras}
\label{subsec:liftingOEDynkin}

	We would like to 
lift Theorem~\ref{thm:ADEOE} to the level of Ginzburg algebras. 
	Thanks to 
the relation between simple singularities and the derived representation theory of ADE type Dynkin quivers established in \cite{kst0511155}
	this is possible if Assumption~\ref{assump} is satisfied. 

\medskip

To review this relation let $W^{(\Gamma)}$ be a simple singularity as in~\eqref{eq:simplesing}, 
and let $Q^{(\Gamma)}$ be a Dynkin quiver of the same ADE type as $W^{(\Gamma)}$.\footnote{$Q^{(\Gamma)}$ is obtained from the corresponding Dynkin diagram~$\Gamma$ in Figure~\ref{fig:DynkinDiagrams} by choosing arbitrary orientations for its edges.} 
Furthermore, we denote by 
$$
\Tcal^{(\Gamma)}=\bigoplus_{i=1}^{\vert \Gamma \vert} T_i^{(\Gamma)}
$$ 
the direct sum of the simple objects $T_i^{(\Gamma)}$ in $\hmfgr ( \C[x,y,z], W^{(\Gamma)} )$. 
Using Corollary~3.16 of \cite{kst0511155} and the remark that follows it, we have: 

\begin{theorem}
$\Tcal^{(\Gamma)}$ is a tilting object in $\hmfgr ( \C[x,y,z], W^{(\Gamma)} )$. Moreover, the path algebra $\C Q^{(\Gamma)}$ is isomorphic to the endomorphism algebra of $\Tcal^{(\Gamma)}$.
\end{theorem}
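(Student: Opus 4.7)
The plan is to quote the construction of \cite{kst0511155} and indicate how the two claims (tilting object and identification of the endomorphism algebra) are established. In what follows I would be brief, since the result is essentially a restatement of \cite[Cor.\,3.16]{kst0511155} combined with a standard argument.

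First I would recall the explicit description of the indecomposables $T_i^{(\Gamma)}$. In \cite{kst0511155} these are built by hand as graded matrix factorisations of $W^{(\Gamma)}$, one for each vertex of the corresponding Dynkin diagram, and their list exhausts, up to isomorphism and grading shift, the indecomposable objects of $\hmfgr(\C[x,y,z], W^{(\Gamma)})$.
In particular, every object in $\hmfgr(\C[x,y,z], W^{(\Gamma)})$ is isomorphic to a direct sum of (shifts of) the $T_i^{(\Gamma)}$, so the direct sum $\Tcal^{(\Gamma)}$ is automatically a classical generator of the triangulated category.

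Next I would verify the tilting condition
$
\Hom_{\hmfgr(\C[x,y,z], W^{(\Gamma)})}\bigl(\Tcal^{(\Gamma)}, \Tcal^{(\Gamma)}[j]\{\ell\}\bigr) = 0
$
for all nonzero cohomological shifts $j$ and appropriate grading shifts $\ell$. This is done by a direct computation: for each pair $(i,j)$ one writes down the morphism complex between $T_i^{(\Gamma)}$ and $T_j^{(\Gamma)}$ using the explicit twisted differentials, and the quasi-homogeneous grading forces all contributions in non-trivial shifts to vanish. The computation is carried out in detail in \cite[Sect.\,5]{kst0511155} and the vanishing follows from the hereditary nature of the situation.

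To identify $\End_{\hmfgr(\C[x,y,z], W^{(\Gamma)})}(\Tcal^{(\Gamma)})$ with $\C Q^{(\Gamma)}$, I would decompose the endomorphism algebra into its block components $\Hom(T_i^{(\Gamma)}, T_j^{(\Gamma)})$ and exhibit a basis: the idempotents $e_i \in \End(T_i^{(\Gamma)})$ correspond to the trivial paths at the vertices, while the degree-zero maps $T_i^{(\Gamma)} \to T_j^{(\Gamma)}$ between indecomposables attached to adjacent vertices of the Dynkin diagram provide the generating arrows. Matching multiplication with the concatenation of paths then yields the desired algebra isomorphism. The main obstacle is purely computational, namely bookkeeping the explicit morphisms between the graded matrix factorisations; this is precisely what \cite{kst0511155} carries out, and for our purposes the result may be invoked as stated.
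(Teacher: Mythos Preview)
Your proposal is correct and in fact more detailed than what the paper does: the paper does not give any proof of this theorem, it merely states it as a direct citation of \cite[Cor.\,3.16]{kst0511155} and the remark following it. Your sketch of the argument from \cite{kst0511155} is accurate and appropriate, but strictly speaking the paper treats this as a black box.
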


In particular, there is an equivalence of triangulated categories
$$
 \Hom \!\big(\Tcal^{(\Gamma)},-\big) \colon \hmfgr \!\big( \C[x,y,z], W^{(\Gamma)} \big) 
 \longrightarrow 
 \Dbfb \big(\!\mod(\C Q^{(\Gamma)})\big)
$$
between the homotopy category of matrix factorisations of $W^{(\Gamma)}$ and the bounded derived category of finitely generated left modules over the path algebra $\C Q^{(\Gamma)}$. As a consequence we can restate the result on orbifold equivalences in Theorem~\ref{thm:ADEOE}. We will do so in terms of the bicategory $\mathcal{H}_{\C}$ whose objects are hereditary $\C$-algebras of finite representation type and whose 1-morphism categories $\mathcal{H}_{\C}(A,B)$ are the categories of complexes of projective left modules over $A^{\op}\otimes_{\C} B$. 

By a well-know result due to Gabriel, all objects of $\mathcal{H}_{\C}$ are isomorphic to path algebras $\C Q^{(\Gamma)}$ of Dynkin quivers $Q^{(\Gamma)}$ of some ADE type~$\Gamma$. Since these path algebras are homologcially smooth and proper when viewed as dg algebras concentrated in degree $0$ (with zero differential), we see that $\mathcal{H}_{\C}$ is a full subbicategory of the bicategory $\DGsp_{\C}$. 

Let $X$ be the $1$-morphism in $\LGgr$ exhibiting the orbifold equivalence between $\C Q^{(\Gamma)}$ and $\C Q^{(\Gamma')}$ in Theorem~\ref{thm:ADEOE}. By an argument analogous to that found in \cite[Prop.\,3.16]{d0904.4713}, one can show that the diagram of functors
$$
\begin{tikzpicture}[
			     baseline=(current bounding box.base), 
			     >=stealth,
			     descr/.style={fill=white,inner sep=3.5pt}, 
			     normal line/.style={->}
			     ] 
\matrix (m) [matrix of math nodes, row sep=4em, column sep=2.5em, text height=1.5ex, text depth=0.9ex] {%
\hmfgr \!\big(\C[x,y,z], W^{(\Gamma)} \big) &&&&& \hmfgr \!\big( \C[x',y',z'], W^{(\Gamma')} \big)
\\
\Dbfb \big(\!\mod(\C Q^{(\Gamma)})\big) &&&&& \Dbfb \big(\!\mod(\C Q^{(\Gamma')})\big)
\\
};
\path[font=\small] (m-1-1) edge[->] node[auto] {$ X \otimes - $} (m-1-6);
\path[font=\small] (m-1-1) edge[->] node[right] {$ \Hom(\Tcal^{(\Gamma)},-) $} (m-2-1);
\path[font=\small] (m-1-6) edge[->] node[left] {$ \Hom(\Tcal^{(\Gamma')},-) $} (m-2-6);
\path[font=\small] (m-2-1) edge[->] node[below] {$ \Hom(\Tcal^{(\Gamma)\vee}\otimes_{\C} \Tcal^{(\Gamma')}, X) \Lotimesl_{\C Q^{(\Gamma)}}- $} (m-2-6);
\end{tikzpicture}
$$
commutes up to a natural isomorphism. Coupling this fact with Proposition~\ref{prop:2.3}, it follows that 
$$
\Hom_{\hmfgr( \C[x,y,z,x',y',z'], W^{(\Gamma)} - W^{(\Gamma')})} \big(\Tcal^{(\Gamma)\vee}\otimes_{\C} \Tcal^{(\Gamma')}, X \big)
$$ 
is a complex of projective $(\C Q^{(\Gamma)})^{\op}\otimes_{\C} \C Q^{(\Gamma')}$-modules and therefore a $1$-morphism in $\mathcal{H}_{\C}$. Moreover, it is ambidextrous and has invertible quantum dimensions because both $\Hom(\Tcal^{(\Gamma)},-)$ and $\Hom(\Tcal^{(\Gamma')},-)$ are equivalences of triangulated categories. 
Hence Theorem~\ref{thm:ADEOE} can be rephrased as follows: 

\begin{proposition}
In $\mathcal{H}_{\C}$ (and thus in $\DGsp_{\C}$), there are orbifold equivalences 
\begin{align*}
& \C Q^{(\mathrm{A}_{2d-1})} \sim \C Q^{(\mathrm{D}_{d+1})}
\, , \quad
\\
&\C Q^{(\mathrm{A}_{11})} \sim \C Q^{(\mathrm{E}_6)} 
\, , \quad
\C Q^{(\mathrm{A}_{17})} \sim \C Q^{(\mathrm{E}_7)} 
\, , \quad
\C Q^{(\mathrm{A}_{29})} \sim \C Q^{(\mathrm{E}_8)} \, .
\end{align*}
\end{proposition}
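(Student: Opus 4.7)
The plan is to extract the statement from the discussion immediately preceding it, treating the proposition essentially as a corollary of Theorem~\ref{thm:ADEOE} transported across the Kajiura--Saito--Takahashi tilting equivalences. I would organise the verification into three short steps.

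First, given the $1$-morphism $X \in \LGgr$ exhibiting one of the orbifold equivalences in Theorem~\ref{thm:ADEOE}, I define the candidate $1$-morphism in $\mathcal{H}_{\C}$ to be
\[
Y = \Hom\big(\Tcal^{(\Gamma)\vee}\otimes_{\C} \Tcal^{(\Gamma')}, X\big).
\]
Second, I verify that $Y$ lies in $\mathcal{H}_{\C}(\C Q^{(\Gamma)}, \C Q^{(\Gamma')})$. The commutative square displayed immediately before the proposition identifies the functor $Y \Lotimesl_{\C Q^{(\Gamma)}}-$ with the conjugate of $X \otimes -$ by the tilting equivalences $\Hom(\Tcal^{(\Gamma)},-)$ and $\Hom(\Tcal^{(\Gamma')},-)$. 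Since $X \otimes -$ preserves perfect objects (because $X$ is already a $1$-morphism of $\LGgr$) and the tilting functors are equivalences of triangulated categories, $Y \Lotimesl_{\C Q^{(\Gamma)}}-$ preserves perfect dg modules. Proposition~\ref{prop:2.3} then forces $Y$ to be a perfect $(\C Q^{(\Gamma)})^{\op}\otimes_{\C}\C Q^{(\Gamma')}$-module, and hence, up to quasi-isomorphism, a bounded complex of finitely generated projectives, as required by the definition of $\mathcal{H}_{\C}$.

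Third, I verify that $Y$ is ambidextrous with invertible quantum dimensions. Ambidexterity and quantum dimensions are intrinsically determined by $- \otimes Y$ together with its adjoints; since the tilting equivalences conjugate $- \otimes Y$ into $- \otimes X$, the left and right adjoints of $- \otimes Y$ are the conjugates of the corresponding adjoints of $- \otimes X$, and the induced traces on $\End(I_{\C Q^{(\Gamma)}})$ and $\End(I_{\C Q^{(\Gamma')}})$ agree with those on $\End(I_{W^{(\Gamma)}})$ and $\End(I_{W^{(\Gamma')}})$ up to the scalar identifications of units. Since $\diml(X)$ and $\dimr(X)$ are invertible scalars in $\C$ (cf.~\cite[Rem.\,2.3]{CRCR}), the same holds for $\diml(Y)$ and $\dimr(Y)$, and $Y$ exhibits the required orbifold equivalence in $\mathcal{H}_{\C}$ and, via the full embedding $\mathcal{H}_{\C} \subset \DGsp_{\C}$, also in $\DGsp_{\C}$.

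The main obstacle lies in this last step: the tilting functors a priori give only triangulated equivalences of $\Hom$-categories, so to transport adjunction data and quantum dimensions one must check compatibility with the bicategorical structure (composition, units, evaluation and coevaluation maps). This is the content of the argument of \cite[Prop.\,3.16]{d0904.4713} alluded to in the excerpt; I would spell it out by working with the explicit representatives of adjunction data from Proposition~\ref{prop:4.5} and verifying that conjugation by the tilting $\Hom$-functors carries the Casimir elements for $X$ to Casimir elements for $Y$, up to the invertible scalars coming from the identifications of units induced by the isomorphisms $\C Q^{(\Gamma)} \cong \End(\Tcal^{(\Gamma)})$.
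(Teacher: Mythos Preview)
Your proposal is correct and follows essentially the same route as the paper: define $Y=\Hom(\Tcal^{(\Gamma)\vee}\otimes_\C\Tcal^{(\Gamma')},X)$, use the commutative square together with Proposition~\ref{prop:2.3} to see that $Y$ is a perfect bimodule, and then transport ambidexterity and invertibility of quantum dimensions across the tilting equivalences. The paper dispatches the third step in a single sentence (``it is ambidextrous and has invertible quantum dimensions because both $\Hom(\Tcal^{(\Gamma)},-)$ and $\Hom(\Tcal^{(\Gamma')},-)$ are equivalences of triangulated categories''), whereas you are more explicit about the bicategorical compatibility issue; your caution is warranted but does not amount to a different argument.
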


Invoking Theorems~\ref{thm:maintheorem} and~\ref{thm:2.8} we 
	find: 

\begin{corollary}\label{cor:GinzburgDynkinOE}
	For those integers $n\geqslant 2$ for which the orbifold equivalences above satisfy Assumption~\ref{assump}, 
there are orbifold equivalences 
\begin{align*}
& \Gamma_n \big(Q^{(\mathrm{A}_{2d-1})}\big) \sim \Gamma_n\big( Q^{(\mathrm{D}_{d+1})}\big)
\, , \quad
\\
&\Gamma_n\big( Q^{(\mathrm{A}_{11})} \big)\sim \Gamma_n\big(Q^{(\mathrm{E}_6)} \big)
\, , \quad
\Gamma_n\big(Q^{(\mathrm{A}_{17})}\big) \sim\Gamma_n\big( Q^{(\mathrm{E}_7)} \big)
\, , \quad
\Gamma_n\big( Q^{(\mathrm{A}_{29})}\big) \sim\Gamma_n\big( Q^{(\mathrm{E}_8)}\big) 
\end{align*}
between Ginzburg algebras 
	in $\DGs_{\C}$. 
\end{corollary}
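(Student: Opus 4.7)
The plan is to chain together three ingredients that have already been assembled: the orbifold equivalences $\C Q^{(\Gamma)} \sim \C Q^{(\Gamma')}$ in $\DGsp_{\C}$ from the preceding proposition, the lifting theorem (Theorem~\ref{thm:maintheorem}) which transports orbifold equivalences to Calabi-Yau completions, and Keller's identification $\Pi_n(\C Q) \cong \Gamma_n(Q)$ from Theorem~\ref{thm:2.8}. The argument is essentially a three-line deduction.

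First I would verify the applicability of Theorem~\ref{thm:maintheorem}: since the path algebras $\C Q^{(\Gamma)}$ of Dynkin quivers are finite-dimensional hereditary $\C$-algebras concentrated in degree zero, they are both homologically smooth and proper, so they are objects of $\DGsp_{\C}$. By the proposition just above, for each of the listed pairs $(\Gamma,\Gamma')$ we have an orbifold equivalence $\C Q^{(\Gamma)} \sim \C Q^{(\Gamma')}$ exhibited by an ambidextrous 1-morphism with invertible left and right quantum dimensions. Applying Theorem~\ref{thm:maintheorem} to each such 1-morphism yields an orbifold equivalence $\Pi_n(\C Q^{(\Gamma)}) \sim \Pi_n(\C Q^{(\Gamma')})$ in $\DGs_{\C}$ for every integer $n\geqslant 2$.

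Finally, Theorem~\ref{thm:2.8} supplies a quasi-isomorphism of dg algebras between $\Pi_n(\C Q^{(\Gamma)})$ and $\Gamma_n(Q^{(\Gamma)})$, and similarly for $\Gamma'$. Quasi-isomorphic dg algebras are equivalent as objects of $\DGs_{\C}$ (the induced bimodule is a Morita equivalence, hence an invertible 1-morphism with invertible quantum dimensions). Composing with these equivalences on either side of the 1-morphism constructed in the previous step preserves invertibility of the quantum dimensions, so the orbifold equivalence descends to $\Gamma_n(Q^{(\Gamma)}) \sim \Gamma_n(Q^{(\Gamma')})$ in $\DGs_{\C}$.

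There is really no hard step here, since the substantive work is already contained in Theorem~\ref{thm:maintheorem} and Theorem~\ref{thm:2.8}; the only minor point to check is that quasi-isomorphism of dg algebras induces an equivalence in $\DGs_{\C}$ which respects the orbifold-equivalence relation, but this is a standard consequence of the bicategorical formalism and the fact that equivalences are closed under horizontal composition.
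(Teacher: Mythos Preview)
Your proposal is correct and follows exactly the approach the paper takes: the paper simply says ``Invoking Theorems~\ref{thm:maintheorem} and~\ref{thm:2.8}, we can lift this to the level of Ginzburg algebras,'' and your write-up spells out precisely these two steps together with the preceding proposition on path algebras. The only additional detail you supply --- that the quasi-isomorphism of Theorem~\ref{thm:2.8} yields an equivalence in $\DGs_{\C}$ and hence preserves orbifold equivalence --- is a routine observation entirely in the spirit of the paper's terse derivation.
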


\begin{remark}\label{rem:Fukaya}
There is a link between derived categories of Ginzburg algebras for $n=3$ and Fukaya categories of quasi-projective $3$-folds associated to meromorphic quadratic differentials. Here we briefly describe this link following \cite{Smith13}, and offer some comments on the relation to our results. 

The input data is a marked bordered surface $(S,M)$ with $\partial S \neq \emptyset$ arising from a meromorphic quadratic differential $\phi$ on a Riemann surface~$C$ with a non-empty set of poles of order $\geqslant 3$.\footnote{What this means, in more detail, is that~$S$ is obtained as the real blow-up of~$C$ at poles of $\phi$ of order $\geqslant 3$, while $M \subset S$ is given by the poles of $\phi$ of order $\leqslant 2$ together with the distinguished tangent directions at the poles of order $\geqslant 3$.} To each ideal triangulation $T$ of $S$ with vertices at $M$ there is an associated quiver with superpotential $(Q_T, W_T)$, as originally described in \cite{LF09}, and we may consider the corresponding Ginzburg algebra $\Gamma_3(Q_T, W_T)$ (defined in \cite[Def.\,5.1.1]{Ginzburg06} and \cite[Sect.\,6.2]{Keller11}). 

On the other hand, to the meromorphic quadratic differential~$\phi$ one can also associate a quasi-projective $3$-fold $Y_{\phi}$, which is an affine conic fibration over $C$ with nodal fibres over the zeroes of $\phi$, singular fibres at infinity over the double poles, and empty fibres over higher order poles. It is further shown in \cite{Smith13} that for each $\beta \in H^2(Y_{\phi},\Z_2)$ there is a well-defined Fukaya category $\mathcal{F}(Y_{\phi}, \beta)$. 

This construction is inspired by rank-two Gaiotto theories originally introduced in \cite{Gaiotto07} and further studied in \cite{ACCERV12}. The main result of \cite{Smith13} asserts that for a certain class $\beta_0 \in H^2(Y_{\phi},\Z_2)$ there is a fully faithful embedding
\be\label{eq:derivedintoFukaya}
\Dbfb \big(\Gamma_3(Q_T, W_T)\big) 
\longhookrightarrow 
\Dbfb \big(\mathcal{F}(Y_{\phi}, \beta_0)\big) 
\ee
where $\Dbfb (\mathcal{F}(Y_{\phi},\beta_0))$ denotes the derived category of $\mathcal{F}(Y_{\phi}, \beta_0)$, defined as the degree zero cohomology of the category of twisted complexes over the idempotent completion of $\mathcal{F}(Y_{\phi}, \beta_0)$. 
It is further conjectured in \cite{Smith13} that if $\mathcal{K}(Y_{\phi}, \beta_0)$ denotes the full $A_{\infty}$-subcategory of $\mathcal{F}(Y_{\phi}, \beta_0)$ generated by Lagrangian matching spheres, then there is an equivalence of triangulated categories  
$$
\Dbfb \big(\Gamma_3(Q_T, W_T)\big) 
\xlongrightarrow{\cong} 
\Dbfb \big(\mathcal{K}(Y_{\phi}, \beta_0)\big) \, .
$$

As pointed out in \cite{BS15}, the Dynkin quivers $Q^{(\mathrm{A}_{d-1})}$ and $Q^{(\mathrm{D}_{d+1})}$ can be obtained from an ideal triangulation of two marked bordered surfaces $(S,M)$ as above. To be more specific, the Dynkin quiver $Q^{(\mathrm{A}_{d-1})}$ is obtained from an ideal triangulation of an unpunctured disc with $d+2$ points on its boundary, while the Dynkin quiver $Q^{(\mathrm{D}_{d+1})}$ is obtained from an ideal triangulation of a once-punctured disc with $d+1$ points on its boundary; the meromorphic quadratic differentials $\phi^{(\mathrm{A}_{d-1})}$ and $\phi^{(\mathrm{D}_{d+1})}$ corresponding to these bordered surfaces are also explicitly described in \cite{BS15}. 

On the other hand, the quasi-projective $3$-fold $Y_{\phi^{(\mathrm{A}_{d-1})}}$ associated to the meromorphic differential $\phi^{(\mathrm{A}_{d-1})}$ has been studied before, notably by Khovanov and Seidel in \cite{KS02}. It is described as the total space of a Lefschetz fibration over the complex plane with affine quadric fibres and $d$ nodal singular fibres. To be more explicit, $Y_{\phi^{(\mathrm{A}_{d-1})}}$ can be realised as a hypersurface $\{x^2 + y^2 + z^2 + t^{d}=1 \}$ in $\C^4$, where the fibration over the complex plane comes from the projection to the $t$-variable. In this case, it is known that for each $\beta^{(\mathrm{A}_{d-1})} \in H^2(Y_{\phi^{(\mathrm{A}_{d-1})}},\ZZ_2)$ the Lagrangian matching spheres generate the Fukaya category $\mathcal{F}(Y_{\phi^{(\mathrm{A}_{d-1})}},\beta^{(\mathrm{A}_{d-1})})$, and that for a certain class $\beta_0^{(\mathrm{A}_{d-1})} \in H^2(Y_{\phi^{(\mathrm{A}_{d-1})}},\ZZ_2)$ there is an equivalence of triangulated categories
$$
\Dbfb\left(\Gamma_3(Q^{(\mathrm{A}_{d-1})})\right) \xlongrightarrow{\cong} \Dbfb\big(\mathcal{F}(Y_{\phi^{(\mathrm{A}_{d-1})}},\beta_0^{(\mathrm{A}_{d-1})})\big) \, .
$$
It should be remarked, though, that we are not aware of one single reference for these assertions.\footnote{For the generation statement, one may appeal to \cite{AS12}, where, in a broader setup, it is shown that all Lagrangian spheres are generated by Lagrangian matching spheres. As for the second statement, \cite{ST01} or \cite{KS02} show that the relevant Floer algebra is intrinsically formal, from which one may conclude that it should be quasi-isomorphic to the Ginzburg algebra $\Gamma_3(Q^{(\mathrm{A}_{d-1})})$, as the latter has zero superpotential. We thank Ivan Smith for explaining this to us.}

As regards to the quasi-projective $3$-fold $Y_{\phi^{(\mathrm{D}_{d+1})}}$ associated to the meromorphic differential $\phi^{(\mathrm{D}_{d+1})}$, it was pointed out to us by I.~Smith that it may be obtained as the total space of a Lefschetz fibration over the complex plane with $d$ nodal singular fibres and a reducible fibre at the origin, which can also be described as a hypersurface $\{x^2 + t(y^2 + z^2) + t^{d}=1 \}$ in $\C^4$, where again the fibration over the complex plane comes from the projection to the $t$-variable. It seems conceivable that in this case one can also argue, as above, that for each $\beta^{(\mathrm{D}_{d+1})} \in H^2(Y_{\phi^{(\mathrm{D}_{d+1})}},\ZZ_2)$ the Lagrangian matching spheres generate the Fukaya category $\mathcal{F}(Y_{\phi^{(\mathrm{D}_{d+1})}},\beta^{(\mathrm{D}_{d+1})})$, and that for some class $\beta_0^{(\mathrm{D}_{d+1})} \in H^2(Y_{\phi^{(\mathrm{D}_{d+1})}},\ZZ_2)$ there is conjecturally an equivalence of triangulated categories
$$
\Dbfb\left(\Gamma_3(Q^{(\mathrm{D}_{d+1})})\right) \xlongrightarrow{\cong} \Dbfb\big(\mathcal{F}(Y_{\phi^{(\mathrm{D}_{d+1})}},\beta_0^{(\mathrm{D}_{d+1})})\big) \, .
$$

In view of this, and taking note of Corollary~\ref{cor:GinzburgDynkinOE}, it is natural for us to conjecture that there are orbifold equivalences 
$$
Y_{\phi^{(\mathrm{A}_{2d-1})}} \sim Y_{\phi^{(\mathrm{D}_{d+1})}} \, . 
$$
Here the relevant bicategory should be that of symplectic manifolds and Lagrangian correspondences studied in \cite{ww0708.2851}. We also conjecture a similar connection between the quasi-projectice $3$-folds (which are real symplectic 6-folds) associated to quivers of Dynkin type $\mathrm{A}_{11},\mathrm{A}_{17},\mathrm{A}_{29}$ and those associated to quivers of type $\mathrm{E}_6,\mathrm{E}_7,\mathrm{E}_8$, respectively. 
We stress though that an interpretation of the derived category of E-type Ginzburg algebras in terms of Fukaya categories of quasi-projective $3$-folds is unknown to us. What is known, however, is that E-type Dynkin quivers cannot be obtained from ideal triangulations of marked bordered surfaces as in type A and D, cf.~\cite[Thm.\,13.3\,\&\,Rem.13.5]{FST}. 
\end{remark}

\appendix

\section{Differential graded algebras and their modules}\label{app:dgalgebras}

A \textsl{differential graded algebra} over $k$ (or simply a \textsl{dg algebra}) is a graded associative $k$-algebra $A=\bigoplus_{i \in \ZZ} A^i$ equipped with a $k$-linear map $d=d_A \colon A \to A$ of degree $+1$ with $d^2=0$, such that the Leibniz rule
$$
d(a_1 a_2)= (d a_1)a_2+(-1)^{\vert a_1 \vert } a_1 (d a_2) \, ,
$$
holds for any homogenous $a_1 \in A$ and all $a_2 \in A$ (the degree of a homogenous element $a$ being denoted by $\vert a \vert$). A \textsl{map of dg algebras} $\varphi :A \to B$ is a homomorphism of graded algebras with $\varphi \circ d_A=d_B \circ \varphi$. 

Any dg algebra $A$ over $k$ gives rise to a complex
$$
\cdots \longrightarrow A^{i-1}  \xlongrightarrow{d^{i-1}} A^i  \xlongrightarrow{d^{i}} A^{i+1} \longrightarrow \cdots
$$
where the differentials are given by $d^{i}=d\vert_{A^i}$. The cohomology of this complex is denoted by $\Hrm(A)$ and is simply called the \textsl{cohomology of the dg algebra} $A$. Note that $\Hrm(A)$ is a graded  $k$-algebra.

The \textsl{tensor product} $A \otimes_k B$ of two dg algebras is their tensor product as graded  $k$-algebras, with the differential given by
$$
d(a \otimes b)=(d_A a) \otimes b+(-1)^{\vert a\vert} a \otimes (d_B b) \, .
$$
We also assign to each dg algebra $A$ an \textsl{opposite dg algebra} $A^{\op}$. This is defined to be the dg algebra whose underlying $k$-vector space is the same as for $A$ and whose multiplication is given by $a \cdot b=(-1)^{\vert a\vert \vert b\vert}b a$ for homogeneous $a,b \in A$.

Let $A$ be a dg algebra over $k$. A \textsl{differential graded left $A$-module} (or simply a \textsl{dg left $A$-module}) is a graded left $A$-module $M=\bigoplus_{i \in \ZZ}M^i$ equipped with a $k$-linear map $\nabla=\nabla_M \colon M \to M$ of degree $+1$ with $\nabla^2=0$, such that 
$$
\nabla (a m)=(da)m+(-1)^{\vert a\vert} a (\nabla m),
$$
holds for any homogenous $a \in A$ and all $m \in M$. One may similarly define the notion of a dg right $A$-module.

It is worth noticing that any dg right $A$-module $M$ can be seen as a dg left $A^{\op}$-module via $a \cdot m=(-1)^{\vert a \vert \vert m \vert}m a$ for $a \in A$ and $m \in X$. From now on, when we say ``$M$ is a dg $A$-module'' we will mean ``$M$ is a dg left $A$-module''. In a similar manner, ``$M$ is a dg $A^{\op}$-module'' will mean ``$M$ is a dg right $A$-module''.

Any ordinary $k$-algebra $A$ can be considered as a dg algebra with $A^0=A$ and $A^{i}=0$ for all $i \neq 0$. In this case, a dg $A$-module is simply a complex of left $A$-modules. In general, like a dg algebra $A$, a dg $A$-module $M$ is also inherently a complex. We denote by $\Hrm(M)$ the cohomology of $M$ and note that it has a natural structure of a graded left $\Hrm(A)$-module.

If $M$ and $N$ are dg $A$-modules, a \textsl{map of dg modules} $f \colon M \to N$ is just a homomorphism of graded modules, of degree $0$, such that $f \circ \nabla_M=\nabla_N \circ f$. With these morphisms, dg $A$-modules form a category in which submodules and quotient modules, kernels, images, coimages and cokernels are defined as usual.  

On this category we define bifunctors $\Hom_A(-,-)$ and $-\otimes_A-$. For dg $A$-modules $M$ and $N$, a \textsl{graded map of degree} $n$ is a $k$-linear map $f \colon M \to N$ such that
$$
f(a m)=(-1)^{\vert a \vert n} a f(m) \, ,
$$
for any homogeneous $a \in A$ and all $m \in M$. In other words, $f$ is a homomorphism from $M$ to $N$, regarded just as modules over the graded algebra $A$. The set of all such $f$ is a $k$-vector space which we denote by $\Hom^n_A(M,N)$. The graded $k$-vector space $\Hom_A(M,N)=\bigoplus_{n \in \ZZ} \Hom^n_A(M,N)$ has a differential defined for each $f \in \Hom^n_A(M,N)$ as
$$
\nabla f=\nabla_N \circ f - (-1)^{n} f \circ \nabla_M \, .
$$
Thus $\Hom_A (M,N)$ is a dg $k$-vector space. Note that the set of maps of dg modules from $M$ to $N$ is the $k$-vector space of cycles of degree $0$ in the complex $\Hom_A (M,N)$.

Let now~$M$ be a dg $A^{\op}$-module and~$N$ a dg $A$-module. Considered just as modules over the graded algebra $A$, they define a graded $k$-vector space $M \otimes_A N$ which becomes a dg $k$-vector space when the differential~$\nabla$ is defined by
$$
\nabla (m \otimes n)= (\nabla_M m) \otimes n + (-1)^{\vert m \vert} m \otimes (\nabla_N n)
$$
for any homogeneous $m \in M$ and all $n \in N$. 

Let $A$ and $B$ be two dg algebras. A \textsl{dg $B$-$A$-bimodule} $X$ is a $k$-vector space which is a dg $A^{\op}$-module and a dg $B$-module and satisfies the associativity condition $b(x a)=(b x)a$ for all $x \in X$, $a \in A$ and $b \in B$. Every dg $A$-module~$M$ is a dg $A$-$k$-bimodule, where the structure of the right $k$-module on $M$ is given by $\lambda m=m \lambda=m(\lambda 1_A)$ for $\lambda \in k$ and $m \in M$. Similarly, every dg $A^{\op}$-module~$N$ is a dg $k$-$A$-bimodule, where the structure of the left $k$-module on $N$ is given by $n \lambda=\lambda n=(\lambda 1_A)n$ for $\lambda \in k$ and $n \in N$. In particular, any dg algebra $A$ is a dg $A$-$A$-bimodule. It is also worthwhile to mention that, by setting $(a \otimes b) x=(-1)^{(\vert x\vert+\vert b\vert) \vert a\vert}b (xa)$, each dg $B$-$A$-bimodule may be regarded as a dg $A^{\op} \otimes_k B$-module, or vice versa. This reduction carries with it the definitions of $\Hom$ and $\otimes$ for dg bimodules. Here and in the sequel we shall always assume that such an identification has been made.

Let $X$ be a dg $A^{\op} \otimes_k B$-module, $M$ be a dg $A^{\op}$-module and $N$ be a dg $B$-module. Then the graded $k$-vector spaces $\Hom_{A^{\op}}(X,M)$ and $\Hom_{B}(X,N)$ have a dg $B^{\op}$-module and a dg $A$-module structure, respectively, with operation of elements of $B$ and $A$ such that
$$
(b f)(x)=(-1)^{(\vert f \vert +\vert x\vert) \vert b \vert}f(xb)
\, , \quad 
(g a)(x)=g(a x)
$$
for all $a \in A$, $b \in B$, $x \in X$, $f \in \Hom_A(X,M)$ and $g \in \Hom_{B^{\op}}(X,N)$. Over and above that, a dg $A^{\op} \otimes_k C$-module $Y$ and a dg $C^{\op} \otimes_k B$-module $Z$ make the graded $k$-vector spaces $\Hom_{A^{\op}}(X,Y)$ and $\Hom_{B}(X,Z)$ a dg $B^{\op} \otimes_k C$-module and a dg $C^{\op} \otimes_k A$-module, respectively, by the formulas
$$
(b f c)(x)=(-1)^{\vert f \vert \vert b \vert + (\vert b\vert +\vert c\vert)\vert x\vert} f(xb)c
\, , \quad 
(c g a)(x)=c g (a x)
$$
for all $a \in A$, $b \in B$, $c \in C$, $x \in X$, $f \in \Hom_A(X,Y)$ and $g \in \Hom_{B^{\op}}(X,Z)$.

Similarly, let $X$ be a dg $A^{\op} \otimes_k B$-module, $M$ be a dg $A$-module and $N$ be a dg $B^{\op}$-module. The tensor products $X \otimes_A M$ and $N \otimes_B X$ then have a dg $B$-module and a dg $A^{\op}$-module structure, respectively, given by
$$
(m \otimes x) b=m \otimes (xb)
\, , \quad 
a(x \otimes n)=(ax) \otimes n
$$
for all $m \in M$, $n \in N$, $x \in X$, $a \in A$ and $b \in B$. Moreover, a dg $A^{\op} \otimes_k C$-module $X$ and a dg $C^{\op} \otimes_k B$-module $Y$ make $Y \otimes_C X$ a dg $A^{\op} \otimes_k B$-module by setting
$$
a(x \otimes y)b=(ax) \otimes (yb)
$$
for all $x \in X$, $y \in Y$, $a \in A$ and $b \in B$. 

$\Hom$ and $\otimes$ are related by the ``adjoint associativity law''. This statement means, as usual, that if $A$ and $B$ are dg algebras, $M$ is a dg $A$-module, $N$ is a dg $B$-module, $P$ is a dg $A^{\op}$-module, $Q$ is a dg $B^{\op}$-module and $X$ is a dg $A^{\op} \otimes_k B$-module, there are natural isomorphisms of graded $k$-vector spaces
\begin{align*}
\Hom_{B} ( X \otimes_A M, N) &\cong \Hom_{A} (M, \Hom_{B}(X,N)) \, , 
\\
\Hom_{A^{\op}} ( Q \otimes_B X, P) &\cong \Hom_{B^{\op}} (Q, \Hom_{A^{\op}}(X,P))
\end{align*}
which are functorial in $M$, $N$, $P$, $Q$ and $X$.  Expressed in a different way, the tensor functors $X \otimes_A -$ and $- \otimes_B X$ are, respectively, the left adjoints of the functors $\Hom_B(X,-)$ and $\Hom_A^{\op}(X,-)$.

\medskip

Suppose that~$A$ and~$B$ are dg algebras and let $\varphi \colon A \to B$ be a map of dg algebras. We equip~$B$ with both a dg $A$-module and a dg $A^{\op}$-module structure as follows. The dg $A$-module structure on~$B$ is defined by the action $a \cdot b=\varphi(a) b$ for $a \in A$ and $b \in B$, and the dg $A^{\op}$-module structure on~$B$ is given by $b \cdot a=b \varphi(a)$. 

As a direct consequence of the definitions involved, we have the following result which we use to compute quantum dimensions in Section~\ref{sec:liftingOE}. 

\begin{proposition}
\label{prop:2.1}
Let $A$, $A'$, $B$, $B'$ be dg algebras, let $\varphi\colon A \to A'$ and $\psi\colon B \to B'$ be maps of dg algebras, and let~$X$ be a dg $A^{\op} \otimes_k B$-module. 
\begin{enumerate}
\item[(i)]For a dg $A^{\op}$-module~$M$, the $k$-linear map 
\begin{align*}
\xi \colon \Hom_{A^{\op}} (X,M) & \lra \Hom_{A'^{\op}} ( X \otimes_A A'  , M) \, , 
\\
f & \longmapsto \big( x \otimes a' \mapsto f(x) a' \big)
\end{align*}
is an isomorphism of dg $B^{\op}$-modules.
\item[(ii)]For a dg $B$-module $N$, the $k$-linear map 
\begin{align*}
\widetilde{\xi} \colon \Hom_B (X,N) & \lra \Hom_{B'} (B' \otimes_B X, N) \, , 
\\
g & \longmapsto  \big( b' \otimes x \mapsto (-1)^{\vert g \vert \vert b' \vert} b' g(x) \big)
\end{align*}
is an isomorphism of dg $A$-modules.
\end{enumerate}
\end{proposition}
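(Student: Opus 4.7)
The plan is to verify both statements by the same pattern: write down an explicit inverse, and then check that the stated maps and their inverses are homomorphisms of the appropriate dg modules. The underlying content is really just the ``extension of scalars'' adjunction adapted to the dg setting, with Koszul signs tracked carefully.

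For (i), I would first check that $\xi$ is well defined, i.\,e.~that for $f \in \Hom_{A^{\op}}(X,M)$ the assignment $x \otimes a' \mapsto f(x) a'$ descends to the tensor product $X \otimes_A A'$ (which uses that $\varphi$ intertwines the actions, so $f(x \varphi(a)) = f(x) \varphi(a) = (f(x)) \cdot a'$ after passing through $\varphi$), that it is $A'^{\op}$-linear, and that it has the same degree as $f$. The candidate inverse is
\[
\xi^{-1}\colon F \longmapsto \big( x \mapsto F(x \otimes 1_{A'}) \big),
\]
and the two-sided inverse identities $\xi \circ \xi^{-1} = \id$ and $\xi^{-1} \circ \xi = \id$ are a direct computation (the nontrivial direction relies on $x \otimes a' = (x \otimes 1_{A'}) \cdot a'$ together with $A'^{\op}$-linearity of $F$). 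It then remains to show that $\xi$ commutes with the differentials on $\Hom_{A^{\op}}(X,M)$ and $\Hom_{A'^{\op}}(X \otimes_A A',M)$, and with the $B^{\op}$-action; both are straightforward, since the right $B$-action on $X \otimes_A A'$ is $(x\otimes a') b = (xb) \otimes a'$, so applying $\xi(f)$ after the action just amounts to applying $f$ after the $B$-action and then multiplying by $a'$ on the right.

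The argument for (ii) is entirely parallel, with the inverse given by $\widetilde\xi^{-1}(G)(x) = G(1_{B'} \otimes x)$ and the $(-1)^{\vert g\vert\vert b'\vert}$ sign forced by the Koszul sign rule when moving $b'$ past $g$ (it is exactly this sign that makes the map $B'$-linear and compatible with the differential on $\Hom_B(X,N)$ defined by $\nabla g = \nabla_N \circ g - (-1)^{\vert g\vert} g \circ \nabla_X$). The induced left $A$-module structure on $\Hom_{B'}(B' \otimes_B X, N)$ is given by $(a\cdot G)(b' \otimes x) = G(b' \otimes ax)$, and checking compatibility with the corresponding $A$-action on $\Hom_B(X,N)$ from Section~\ref{subsect:2.1} is a direct verification.

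The only real obstacle is the sign bookkeeping in (ii): one has to be careful that the sign $(-1)^{\vert g \vert \vert b' \vert}$ introduced in the definition of $\widetilde\xi$ is precisely the one needed for $\widetilde\xi(g)$ to be a dg $B'$-module homomorphism (not merely a graded map) of the correct degree, and also the one needed so that the differential on the left-hand side is matched with the differential on the right-hand side coming from the Leibniz rule on the tensor product $B' \otimes_B X$. Once this sign is pinned down, everything else is a direct unwinding of the definitions recalled in Appendix~\ref{app:dgalgebras}.
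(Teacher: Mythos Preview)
Your proposal is correct and matches the paper's approach: the paper does not actually supply a proof of this proposition, stating only that it is ``a direct consequence of the definitions involved,'' and your explicit verification via the obvious inverse maps $F \mapsto (x \mapsto F(x \otimes 1_{A'}))$ and $G \mapsto (x \mapsto G(1_{B'} \otimes x))$ together with the routine checks of module and differential compatibility is precisely what such a direct verification amounts to.
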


\end{document}